\title{\bf Congruences on Menger algebras}
\author{Wieslaw A. Dudek and Valentin S. Trokhimenko}
\begin{document}
\sloppy \maketitle

\newtheorem{theorem}{Theorem}[section]
\newtheorem{proposition}[theorem]{Proposition}
\newtheorem{definition}[theorem]{Definition}
\newtheorem{collolary}[theorem]{Corollary}

\begin{abstract}\noindent
We discuss some types of congruences on Menger algebras of rank $n$, which are generalizations of the principal left and right
congruences on semigroups.  We also study congruences  admitting various types of  cancellations
and describe their relationship with strong subsets.
\end{abstract}

\section{Preliminaries}

Let $A$ be a nonempty set. Consider the family $\Phi$ of mappings
$f\colon A^n\rightarrow A$ closed under the $(n+1)$-ary
composition $f[g_1\ldots g_n]$ defined by:
\[
f[g_1\ldots g_n](x_1,\ldots,x_n)=f(g_1(x_1,\ldots,x_n),\ldots,
g_n(x_1,\ldots,x_n)).
\]
Such defined an $(n+1)$-ary composition is called the \textit{Menger superposition} of $n$-place functions (cf. \cite{Dudtro3, SchTr}) and satisfies the so-called \textit{superassociative law}:
\begin{equation} \label{e1}
f[g_1\ldots g_n][h_1\ldots h_n]=f[g_1[h_1\ldots h_n]\ldots
g_n[h_1\ldots h_n]],
\end{equation}
where $f,g_i,h_i\in\Phi$, $i=1,\ldots,n$.

Obtained algebra $(\Phi, \mathcal{O})$, where $\mathcal{O}\colon (f,g_1,\ldots,g_n)\mapsto f[g_1\ldots g_n]$ is a \textit{Menger algebra of $n$-place functions}. Their abstract analog $(G,o)$, i.e., a nonempty set $G$ with an $(n+1)$-ary operation $o:(f,g_1,\ldots,g_n)\mapsto f[g_1\ldots g_n]$ satisfying \eqref{e1} is called a {\it Menger algebra $(G,o)$ of rank $n$}.

Let $(G,o)$ be a Menger algebra of rank $n$, \ $e_1,\ldots,e_n$ -- pairwise different elements that not belong to the set $G$, $\overline{e}=(e_1,\ldots,e_n)$ and $B=G^n\cup\{\overline{e}\}$. In addition, we assume that $g[e_1\ldots e_n]=g$, $e_i[g_1\ldots
g_n]=g_i$ and $e_i[e_1\ldots e_n]=e_i$ for all $g,g_1,\ldots,g_n\in G$ and each $i=1,\ldots,n$.
Elements $e_1,\ldots,e_n$ corresponds to the projectors $I_1,\ldots,I_n$ defined by $I_i(x_1,\ldots,x_n)=x_i$.

Since the $(n+1)$-ary operation $o$ is superassociative, the set $B$ with a binary operation:
$$
\overline{x}*\overline{y}=(x_1,\ldots,x_n)*(y_1,\ldots,y_n)=(x_1[y_1\ldots
y_n],\ldots, x_n[y_1\ldots y_n])
$$
is a semigroup with the identity $\overline{e}$. Obviously $(G^n,*)$ is a subsemigroup of $(B,*)$.

\medskip

Let us consider the set $T_{n}(G)$ of all expressions, called
\textit{polynomials}, in the alphabet $G\cup\{\,[ \; ],x\}$, where the square brackets and $x$ do not
belong to $G$, defined as follows:
\begin{itemize}
\item[$(a)$] $x\in T_n(G)$,
\item[$(b)$] if $\,i\in \{1,\ldots ,n\}, \ \ a,b_{1},\ldots ,b_{i-1},b_{i+1},\ldots
,b_{n}\in G$, \ $t\in T_{n}(G)$, then \ $a[b_{1}\ldots
b_{i-1}t\,b_{i+1},\ldots b_{n}]\in T_{n}(G)$,
\item[$(c)$] $T_{n}(G)$ contains precisely those polynomials which are
defined according to $(a)$ and $(b)$.
\end{itemize}

Every polynomial $t\in T_{n}(G)$ defines on $(G,o)$ an \textit{elementary translation}
$t:x\mapsto t(x)$. The polynomial and the elementary translation
defined by it will be noted by the same letter. Obviously, $t_1\circ t_2\in T_n(G)$
 for any $t_1,t_2\in T_n(G)$, where $(t_1\circ t_2)(x)=t_1(t_2(x))$.
 The identity map on the set $G$ belongs to $T_n(G)$ also.

\medskip

A subset $H$ of a Menger algebra $(G,o)$ of rank $n$ is called
\begin{itemize}
\item \textit{a normal $v$-complex } if for all $ g_1, g_2\in G$,
$t\in T_n(G)$
$$g_1\in H\wedge g_2\in H\wedge t(g_1)\in H\longrightarrow t(g_2)\in H,
$$
\item \textit{a normal $l$-complex } if for all $g_1,g_2\in G$ and
$\overline{x}\in B$
\begin{equation}\label{e2}
  g_1\in H\wedge g_2\in H\wedge g_1[\overline{x}]\in H\longrightarrow
  g_2[\overline{x}]\in H,
\end{equation}
\item \textit{a normal bicomplex } if for any
$g_1,g_2\in G$, $t\in T_n(G)$, $\overline{x}\in B$
\begin{equation}\label{e3}
  g_1\in H\wedge g_2\in H\wedge t(g_1[\overline{x}])\in H\longrightarrow
  t(g_2[\overline{x}])\in H,
\end{equation}
\item \textit{an $l$-ideal } if for all $x,h_1,\ldots,h_n\in G$
$$
(h_1,\ldots,h_n)\in G^n\setminus (G\setminus H)^n\longrightarrow
x[h_1\ldots h_n]\in H,
$$
\item \textit{an $i$-ideal } ($1\leqslant i\leqslant n$), if for all
$h,u\in G$, $\overline{w}\in G^n$
$$
h\in H\longrightarrow u[\overline{w}|_ih]\in H,
$$
where $u[\overline{w}|_ih]$ denotes $u[w_1\ldots
w_{i-1}\,h\,w_{i +1}\ldots w_n]$,
\item \textit{an $s$-ideal } if for all $h,x_1,\ldots,x_n\in G$
$$
h\in H\longrightarrow h[x_1\ldots x_n]\in H,
$$
\item \textit{an $sl$-ideal } if $H$ is both $s$-ideal and $l$-ideal.
\end{itemize}
It is clear that $H$ is an $l$-ideal if and only if it is an
$i$-ideal for all $i=1,\ldots,n$.

\medskip

A binary relation $\rho$ defined on a Menger algebra $(G,o)$ of rank $n$ is called
\begin{itemize}
\item {\it stable } if for all $x,y,x_i,y_i\in G$, \ $i=1,\ldots,n$
$$
(x,y),(x_1,y_1),\ldots,(x_n,y_n)\in\rho\longrightarrow (x[x_1\ldots x_n],
y[y_1\ldots y_n])\in\rho ,
$$
\item {\it $l$-regular } if for any $x,y,z_i\in G$, \ $i=1,\ldots,n$
$$
(x,y)\in\rho\longrightarrow (x[z_1\ldots z_n], y[z_1\ldots z_n])\in\rho ,
$$
\item {\it $v$-regular } if for all $x_i,y_i,z\in G$, \ $i=1,\ldots,n$
$$
(x_1,y_1),\ldots,(x_n,y_n)\in\rho\longrightarrow (z[x_1\ldots x_n],
z[y_1\ldots y_n])\in\rho ,
$$
\item {\it $i$-regular } if for any $u,x,y\in G$, \ $\overline{w}\in G^n$
$$
(x,y)\in\rho\longrightarrow (u[\overline{w}|_ix], u[\overline{w}|_iy])\in\rho ,
$$
\item {\it $l$-cancellative } if for all $x,y\in G$, \ $(z_1,\ldots,z_n)\in G^n$
$$
(x[z_1\ldots z_n], y[z_1\ldots z_n])\in\rho\longrightarrow (x,y)\in\rho,
$$
\item {\it $v$-cancellative } if for all $x,y,u\in G$, \ $\overline{w}\in G^n$,
$i=1,\ldots,n$
$$
(u[\overline{w}|_ix], u[\overline{w}|_iy])\in\rho\longrightarrow (x,y)\in\rho ,
$$
\item {\it $lv$-cancellative } if it is both $l$-cancellative and $v$-cancellative.
\end{itemize}

A reflexive and transitive binary relation is $v$-regular if and only if it is
$i$-regular for all $i=1,\ldots,n$. A binary relation is stable if and only if it is
$l$-regular and $v$-regular (cf. \cite{Dudtro3}). A stable equivalence is a
\textit{congruence}. By a \textit{$v$-congruence} (\textit{$l$-congruence}) we mean an equivalence relation which is $v$-regular ($l$-regular, respectively). It is easy to see that a relation $\rho\subset G\times G$ is $v$-cancellative if and only if
\begin{equation}\label{e4}
(t(x),t(y))\in\rho\longrightarrow (x,y)\in\rho
\end{equation}
holds for all $x,y\in G$ and each translation $t\in T_n(G)$.

The above terminology is inspired by the terminology used in semigroups.
A Menger algebra of rank $n=1$ is a semigroup,  so, in this case, an $l$-regular relation is left regular, and a $v$-regular (vector regular) relation is right regular in the classical sense.

In this paper we describe principal $v$-congruences and principal
$l$-congruences on a Menger algebra of rank $n$. Obtained results
are similar. The main scheme of proofs also is similar, but the
concept of strong subsets (used in the proofs) is different in any
case. Thus, these proofs are not analogous.

In the last section we characterize bistrong subsets of Menger algebras and their
connections with congruences.

\section{Principal $v$-congruences on Menger algebras}

Let $(G,o)$ be a Menger algebra of rank $n$, $H$ its nonempty subset. Put
\[
\rho_{_H} = \{(a,t)\in G\times T_n(G)\,|\,t(a)\in H\},
\]
\[
\rho_{_H}\langle a\rangle =\{t\in T_n(G)\,|\,t(a)\in H\},
\]
\[\rho^{\circ}_{_H}\langle t\rangle =\{a\in G\,|\,t(a)\in H\},
\]
\[
\mathcal{R}_H=\{(a,b)\in G\times G \,|\,\rho_{_H}\langle
a\rangle  = \rho_{_H}\langle b\rangle\},
\]
\[
\mathcal{R}^*_H = \{(a,b)\in G\times G\,|\,\rho_{_H}\langle
a\rangle =\rho_{_H}\langle b\rangle\neq\varnothing\} .
\]

The relation $\mathcal{R}_H $ is an equivalence on $(G,o)$ but the relation $\mathcal{R}^*_H$ is only symmetric and transitive. The set $W_H=\{a\in G\,|\,\rho_{_H}\langle a\rangle=\varnothing\}$ is a complement of
the domain of $\mathcal{R}^*_H$. Thus, if $W_H\neq\varnothing$, then $W_H$ is the
$\mathcal{R}_H $-class. Obviously, $\mathcal{R}^*_H =\mathcal{R}_H\cap(G\!\setminus\!W_H)\!\times\! (G\!\setminus\!W_H)$.

It is not difficult to see that for any $a,b\in G$ we have
\begin{eqnarray}&\label{e5}
(a,b)\in\mathcal{R}_H\longleftrightarrow\Big(\forall
t\in T_n(G)\Big)\Big(t(a)\in H\longleftrightarrow t(b)\in H\Big), &\\[4pt]
&\label{e6} a\in W_H\longleftrightarrow\Big(\forall t\in
T_n(G)\Big)\Big(t(a)\not\in H\Big). &
\end{eqnarray}
Observe that $H\cap W_H=\varnothing$. Indeed, for $h\in H\cap W_H$ we have $h\in H$ and
$t(h)\not\in H$ for any $t\in T_n(G)$. This for the identity translation $t(x)=x$ gives $h\in H$ and $h\not\in H$, which is impossible. So, $H\cap W_H=\varnothing$.

\begin{proposition}\label{P2.1}
$\mathcal{R}_H$ is a $v$-congruence.
\end{proposition}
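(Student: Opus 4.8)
The plan is to lean on the fact, recorded just before the statement, that a reflexive and transitive relation is $v$-regular precisely when it is $i$-regular for every $i=1,\ldots,n$. Since $\mathcal{R}_H$ is defined through equality of the sets $\rho_{_H}\langle a\rangle$, its reflexivity, symmetry and transitivity are immediate (this is already observed in the text), so $\mathcal{R}_H$ is an equivalence and it remains only to verify $i$-regularity for each fixed $i$.

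Fix $i\in\{1,\ldots,n\}$, take $u\in G$, $\overline{w}=(w_1,\ldots,w_n)\in G^n$, and assume $(x,y)\in\mathcal{R}_H$. The key observation is that the assignment $z\mapsto u[\overline{w}|_i z]=u[w_1\ldots w_{i-1}\,z\,w_{i+1}\ldots w_n]$ is exactly the elementary translation determined, via clause $(b)$ of the definition of $T_n(G)$ applied to the basic polynomial $x$ of clause $(a)$, by the polynomial $s:=u[w_1\ldots w_{i-1}\,x\,w_{i+1}\ldots w_n]\in T_n(G)$. Hence for an arbitrary $t\in T_n(G)$ the composite $t\circ s$ again lies in $T_n(G)$, and $t(u[\overline{w}|_i a])=(t\circ s)(a)$ for every $a\in G$.

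Now apply the characterisation \eqref{e5}: from $(x,y)\in\mathcal{R}_H$ we obtain $(t\circ s)(x)\in H\longleftrightarrow(t\circ s)(y)\in H$, that is, $t(u[\overline{w}|_i x])\in H\longleftrightarrow t(u[\overline{w}|_i y])\in H$; since $t\in T_n(G)$ was arbitrary, \eqref{e5} yields $(u[\overline{w}|_i x],u[\overline{w}|_i y])\in\mathcal{R}_H$. Thus $\mathcal{R}_H$ is $i$-regular for each $i$, hence $v$-regular, and therefore a $v$-congruence.

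There is no serious obstacle here: the argument reduces to recognising $z\mapsto u[\overline{w}|_i z]$ as a member of $T_n(G)$ and invoking closure of $T_n(G)$ under composition together with \eqref{e5}. The only point requiring a moment's care is that the side entries $w_1,\ldots,w_{i-1},w_{i+1},\ldots,w_n$ must be genuine elements of $G$ so that clause $(b)$ applies and $s$ is a legitimate polynomial, which is precisely the hypothesis $\overline{w}\in G^n$. If one prefers to argue $v$-regularity directly rather than via $i$-regularity, one simply replaces the single substitution by the chain $z[x_1\ldots x_n]\,\mathcal{R}_H\,z[y_1 x_2\ldots x_n]\,\mathcal{R}_H\cdots\mathcal{R}_H\,z[y_1\ldots y_n]$, altering one coordinate at a time and closing up by transitivity.
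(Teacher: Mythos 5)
Your argument is correct and is essentially the paper's own proof: the paper likewise deduces $i$-regularity for each $i$ from \eqref{e5}, implicitly using that $t(u[\overline{w}\,|_i\,x])$ is again a translation, and then invokes the equivalence between $v$-regularity and $i$-regularity for all $i$. You merely make explicit the step the paper leaves tacit, namely that $z\mapsto u[\overline{w}\,|_i z]$ is the elementary translation $s\in T_n(G)$ and that $t\circ s\in T_n(G)$, which is a welcome clarification but not a different route.
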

\begin{proof} Let $(a,b)\in\mathcal{R}_H$. Then, according to (\ref{e5}), for every $t\in
T_n(G)$ we have $t(a)\in H\longleftrightarrow t(b)\in H$. In particular $t(u[\overline{w}\,|_ia])\in H\longleftrightarrow t(u[\overline{w}\,|_ib])\in H$ for every $u\in G$, $\overline{w}\in G^n$ and $i\in\{1,\ldots,n\}$. So, $(u[\overline{w}\,|_ia], u[\overline{w}\,|_ib])\in\mathcal{R}_H$. Hence, the
relation $\mathcal{R}_H$ is $i$-regular for every $i=1,\ldots,n$. Consequently, it is
a $v$-congruence on $(G,o)$.
\end{proof}

\begin{proposition}\label{P2.2}
$W_H$ is an $l$-ideal, provided that it is a nonempty set.
\end{proposition}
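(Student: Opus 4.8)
The plan is to reduce the claim to the characterization \eqref{e6}, namely that $a\in W_H$ if and only if $t(a)\notin H$ for every $t\in T_n(G)$. First I would unwind the definition of $l$-ideal in the case at hand. A tuple $(h_1,\ldots,h_n)\in G^n$ lies in $(G\setminus W_H)^n$ exactly when none of its coordinates belongs to $W_H$, so the hypothesis $(h_1,\ldots,h_n)\in G^n\setminus(G\setminus W_H)^n$ simply says that $h_i\in W_H$ for at least one index $i$. Hence what has to be shown is: for all $u,h_1,\ldots,h_n\in G$, if some $h_i\in W_H$ then $u[h_1\ldots h_n]\in W_H$. (The assumption $W_H\neq\varnothing$ is needed only so that $W_H$ qualifies as a genuine, nonempty ideal; it plays no role in the implication itself.)

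To establish this, fix $u,h_1,\ldots,h_n\in G$ together with an index $i$ with $h_i\in W_H$, and fix an arbitrary elementary translation $t\in T_n(G)$; by \eqref{e6} it suffices to prove $t(u[h_1\ldots h_n])\notin H$. The single idea is to exhibit this element as the value of one elementary translation at $h_i$. By $(a)$ the variable $x$ belongs to $T_n(G)$, so by $(b)$ applied in the $i$-th slot the polynomial $s:=u[h_1\ldots h_{i-1}\,x\,h_{i+1}\ldots h_n]$ is a member of $T_n(G)$, and evidently $s(h_i)=u[h_1\ldots h_n]$. Since $T_n(G)$ is closed under composition of translations, $t\circ s\in T_n(G)$ and $(t\circ s)(h_i)=t\bigl(u[h_1\ldots h_n]\bigr)$. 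As $h_i\in W_H$, \eqref{e6} gives $(t\circ s)(h_i)\notin H$, i.e.\ $t(u[h_1\ldots h_n])\notin H$. Since $t$ was arbitrary, \eqref{e6} yields $u[h_1\ldots h_n]\in W_H$, which is exactly what we wanted.

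I do not expect a real obstacle here: once one thinks of stripping off the $i$-th coordinate as a translation, the statement becomes a two-line consequence of \eqref{e6} together with the closure properties of $T_n(G)$ already recorded in the preliminaries. The only point worth a careful word is the bookkeeping for the polynomial $s$ --- checking from clauses $(a)$ and $(b)$ that $s$ genuinely lies in $T_n(G)$ and that substituting $h_i$ for its variable reproduces $u[h_1\ldots h_n]$ unchanged --- but this is purely formal. Accordingly I would present the proof in three or four lines: note the reduction, display $s$, and invoke \eqref{e6} twice.
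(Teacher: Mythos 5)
Your proof is correct and follows essentially the same route as the paper: both arguments observe that for $h_i\in W_H$ the element $t(u[h_1\ldots h_{i-1}h_ih_{i+1}\ldots h_n])$ is the value at $h_i$ of a translation in $T_n(G)$ (your $t\circ s$) and then apply \eqref{e6} twice. The only cosmetic difference is that the paper phrases the conclusion via the $i$-ideal condition for each $i$, while you unfold the $l$-ideal definition directly; these are equivalent, as the paper itself notes.
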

\begin{proof}
Let $W_H\neq\varnothing$ and $a\in W_H$. Then, according to (\ref{e6}), $t(a)\not\in H$ for each translation $t\in T_n(G)$. Hence $t_1(u[\overline{w}\,|_ia])\not\in H$ for $t_1(u[\overline{w}\,|_ia])\not\in H$, where
$t_1\in T_n(G)$, $u\in G$, $\overline{w}\in G^n$ and $i\in\{1,\ldots,n\}$. Thus,
$u[\overline{w}\,|_ia]\in W_H$ for each $i=1,\ldots,n$, i.e., $W_H$
is an $l$-ideal of a Menger algebra $(G,o)$.
\end{proof}

The relation $\mathcal{R}_H $ is called the \textit{principal $v$-congruence},
the relation  $\mathcal{R}^*_H$ -- the \textit{principal partial $v$-congruence}
on a Menger algebra $(G,o)$. The set $W_H$ is called the \textit{$v$-residue of $H.$}

\begin{proposition}\label{P2.3}
If a nonempty subset $H$ of a Menger algebra $(G,o)$ is its normal $v$-complex, then it is an
$\mathcal{R}_H$-class different from $W_H$.
\end{proposition}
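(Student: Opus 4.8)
The plan is to show that $H$ coincides with the $\mathcal{R}_H$-class of any one of its elements, and then to observe that this class cannot be $W_H$. Fix $h_0\in H$, which is possible since $H\neq\varnothing$.

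First I would check that $H$ is contained in a single $\mathcal{R}_H$-class. Take arbitrary $a,b\in H$. To obtain $(a,b)\in\mathcal{R}_H$ it suffices, by \eqref{e5}, to verify that $t(a)\in H\longleftrightarrow t(b)\in H$ for every $t\in T_n(G)$. But this is precisely what the normal $v$-complex condition delivers: if $t(a)\in H$, then applying the defining implication with $g_1=a$, $g_2=b$ (both in $H$) yields $t(b)\in H$, and the reverse implication follows by symmetry. Hence $(a,b)\in\mathcal{R}_H$, so all of $H$ lies in the $\mathcal{R}_H$-class containing $h_0$.

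Next I would establish the reverse inclusion: if $a\in G$ and $(a,h_0)\in\mathcal{R}_H$, then $a\in H$. Since the identity map on $G$ belongs to $T_n(G)$, applying \eqref{e5} to this translation gives $a\in H\longleftrightarrow h_0\in H$; as $h_0\in H$, we conclude $a\in H$. Thus $H$ equals the $\mathcal{R}_H$-class of $h_0$. Finally, $H\neq W_H$: we already know $H\cap W_H=\varnothing$ while $H\neq\varnothing$, so these two $\mathcal{R}_H$-classes are distinct (equivalently, the identity translation sends $h_0$ into $H$, so $\rho_{_H}\langle h_0\rangle\neq\varnothing$ and hence $h_0\notin W_H$ by \eqref{e6}).

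I do not expect a genuine obstacle here; the only point requiring a little care is to invoke the identity translation in both parts of the argument — once (together with the normal $v$-complex property) to force the biconditional $t(a)\in H\leftrightarrow t(b)\in H$, and once more to recover membership in $H$ from membership in the class.
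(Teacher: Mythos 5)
Your proof is correct and follows essentially the same route as the paper: the normal $v$-complex condition gives the biconditional $t(a)\in H\leftrightarrow t(b)\in H$ placing $H$ in a single $\mathcal{R}_H$-class, the identity translation yields the reverse inclusion, and $H\cap W_H=\varnothing$ with $H\neq\varnothing$ separates the class from $W_H$. No issues.
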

\begin{proof}
Let $h_1,h_2\in H$. Suppose that $t(h_1)\in H$ for some $t\in T_n(G)$. Then $h_1,h_2,t(h_1)\in H$, which gives $t(h_2)\in H$ because $H$ is a normal $v$-complex. So, $t(h_1)\in H$ implies $t(h_2)\in H$. Analogously, $t(h_2)\in H$ implies $t(h_1)\in H$. Thus, according to (\ref{e5}), $(h_1,h_2)\in\mathcal{R}_H$ for all $h_1,h_2\in H$. Hence $H$ is contained in some $\mathcal{R}_H$-class. Denote this class by $X$.  Now let $h\in H$ and $g$ be an arbitrary element of $X$. Then $(h,g)\in\mathcal{R}_H$, i.e., for each $t\in T_n(G)$ we have $t(h)\in H\longleftrightarrow t(g)\in H$. This for the identity translation gives $h\in H\longleftrightarrow g\in H$. So, $H=X$. Since $H\cap W_H=\varnothing$, the $\mathcal{R}_H$-class $H$ is different from $W_H$.
\end{proof}

\begin{proposition}\label{P2.4}
Any $v$-congruence $\varepsilon$ on a Menger algebra $(G,o)$ has the form
$$
\varepsilon = \bigcap\{\mathcal{R}_H\,|\,H\in G/\varepsilon\},
$$
where $G/\varepsilon$ is the set of all $\varepsilon $-classes.
\end{proposition}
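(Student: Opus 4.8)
The plan is to prove the two inclusions separately, the crucial preliminary being that a $v$-congruence is compatible with every elementary translation. So first I would record the following fact: if $\varepsilon$ is a $v$-congruence and $(a,b)\in\varepsilon$, then $(t(a),t(b))\in\varepsilon$ for every $t\in T_n(G)$. This is proved by induction on the construction of the polynomial $t$ according to clauses $(a)$--$(c)$ of the definition of $T_n(G)$: for $t=x$ it is trivial, and if $t=c[b_1\ldots b_{i-1}\,t'\,b_{i+1}\ldots b_n]$ with $t'$ of smaller complexity, then $(t'(a),t'(b))\in\varepsilon$ by the induction hypothesis, and $i$-regularity of $\varepsilon$ (which holds for each $i$, since $\varepsilon$ is a $v$-regular equivalence, hence reflexive and transitive) yields $(t(a),t(b))\in\varepsilon$.

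For the inclusion $\varepsilon\subseteq\bigcap\{\mathcal{R}_H\mid H\in G/\varepsilon\}$, I would fix $(a,b)\in\varepsilon$ and an arbitrary $\varepsilon$-class $H$. For any $t\in T_n(G)$ we have $(t(a),t(b))\in\varepsilon$ by the fact above, and since $H$ is a full $\varepsilon$-class this gives $t(a)\in H\longleftrightarrow t(b)\in H$. By \eqref{e5} this means $(a,b)\in\mathcal{R}_H$; as $H$ was arbitrary, $(a,b)$ lies in the intersection.

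For the reverse inclusion, I would take $(a,b)$ in the intersection and apply this membership to the particular class $H=[a]_\varepsilon$. Then $(a,b)\in\mathcal{R}_H$, so by \eqref{e5} applied to the identity translation $t(x)=x$ (which, as noted in the preliminaries, belongs to $T_n(G)$) we get $a\in H\longleftrightarrow b\in H$; since $a\in H$ trivially, $b\in H=[a]_\varepsilon$, i.e.\ $(a,b)\in\varepsilon$. Combining the two inclusions gives the claimed equality, and the intersection is over a nonempty index set since $G/\varepsilon\neq\varnothing$.

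The only mildly delicate point is the inductive lemma on translations; everything else is a direct unwinding of the definitions of $\mathcal{R}_H$ and of an $\varepsilon$-class via \eqref{e5}. It is worth observing that the use of the identity translation in the second inclusion is precisely what forces each $\mathcal{R}_H$ to detect membership in $H$ itself, which is the mechanism that reconstructs $\varepsilon$ from its quotient; this is the natural analogue of the classical fact that a congruence on a semigroup is the intersection of the principal congruences determined by its classes.
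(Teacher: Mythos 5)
Your proof is correct and follows essentially the same route as the paper: the forward inclusion via compatibility of $\varepsilon$ with all elementary translations (which the paper infers directly from $i$-regularity, while you make the routine induction on the structure of $t$ explicit), and the reverse inclusion by specializing to the class of $a$ and the identity translation via \eqref{e5}.
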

\begin{proof}
Let $(g_1,g_2)\in\varepsilon$. Consider an arbitrary $\varepsilon$-class $H.$ If $t(g_1)\in H$ for some $t\in
T_n(G)$, then $(t(g_1),t(g_2))\in\varepsilon$ because the relation $\varepsilon$ is $i$-regular
for every $i=1,\ldots,n$. So $t(g_2)\in H$. Analogously, $t(g_2)\in H$ gives $t(g_1)\in H.$
Thus, according to (\ref{e5}), we have $(g_1,g_2)\in\mathcal{R}_H,$ i.e., $(g_1,g_2)\in\varepsilon$ implies $(g_1,g_2)\in\mathcal{R}_H$. Hence $\varepsilon\subseteq\bigcap\{\mathcal{R}_H\,|\,H\in G/\varepsilon\}$.

Conversely, let $(g_1,g_2)\in\bigcap\{\mathcal{R}_H\,|\,H\in G/\varepsilon\}$. Then, according to (\ref{e5}), for all $H\in G/\varepsilon$ and $t\in T_n(G)$ we have
$$
t(g_1)\in H\longleftrightarrow t(g_2)\in H .
$$
From this, for the identity translation $t$ and the $\varepsilon$-class $H_1$ containing $g_1$, we obtain $g_1\in
H_1\longleftrightarrow g_2\in H_1$. Thus $(g_1,g_2)\in\varepsilon$. Consequently, $\bigcap\{\mathcal{R}_H\,|\,H\in G/\varepsilon\}\subseteq\varepsilon$. This completes the proof.
\end{proof}

\begin{definition}\rm
A nonempty subset $H$ of a Menger algebra $(G,o)$ is called \textit{$v$-admissible} if it is an equivalence class
of some $v$-congruence on $(G,o)$.
\end{definition}

\begin{proposition}\label{P2.6}
Each $v$-admissible subset $H$ of a Menger algebra $(G,o)$ is contained in some
$\mathcal{R}_H$-class. Moreover, if $\,\rho_{_H}\langle h\rangle \neq\varnothing$ for
some $h\in H,$ then this $\mathcal{R}_H$-class is different from $W_H$.
\end{proposition}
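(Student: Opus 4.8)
The plan is to mimic the argument of Proposition~\ref{P2.3}, but now using only the fact that $H$ is a class of \emph{some} $v$-congruence $\varepsilon$, rather than the stronger normal $v$-complex hypothesis. First I would take $h_1,h_2\in H$ and show $(h_1,h_2)\in\mathcal{R}_H$. By definition there is a $v$-congruence $\varepsilon$ with $H\in G/\varepsilon$, so $(h_1,h_2)\in\varepsilon$. For any $t\in T_n(G)$, if $t(h_1)\in H$ then, since $\varepsilon$ is $i$-regular for each $i$ (hence $v$-regular, and moreover satisfies $(x,y)\in\varepsilon\Rightarrow(t(x),t(y))\in\varepsilon$ by iterating $i$-regularity along the elementary translation built from $t$), we get $(t(h_1),t(h_2))\in\varepsilon$, and as $H$ is an $\varepsilon$-class containing $t(h_1)$ this forces $t(h_2)\in H$. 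The symmetric implication holds the same way, so by \eqref{e5} we conclude $(h_1,h_2)\in\mathcal{R}_H$. Thus $H$ lies inside a single $\mathcal{R}_H$-class.

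For the second assertion, suppose $\rho_{_H}\langle h\rangle\neq\varnothing$ for some $h\in H$; that is, there exists $t\in T_n(G)$ with $t(h)\in H$. Since $h$ belongs to the $\mathcal{R}_H$-class containing $H$, and that class satisfies \eqref{e6}'s negation — i.e. it is \emph{not} contained in $W_H$ precisely because some translation of its elements lands in $H$ — I would simply invoke the characterization \eqref{e6}: an element $a$ lies in $W_H$ iff $t(a)\notin H$ for every $t\in T_n(G)$. Here $h$ has a translation landing in $H$, so $h\notin W_H$. Since $W_H$ is an $\mathcal{R}_H$-class (when nonempty) and $h$ is in the $\mathcal{R}_H$-class containing $H$, that class must be different from $W_H$.

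The step requiring the most care is the passage from $i$-regularity of $\varepsilon$ (for all $i$) to the implication $(x,y)\in\varepsilon\Rightarrow(t(x),t(y))\in\varepsilon$ for an arbitrary polynomial translation $t\in T_n(G)$. This is not literally one of the listed hypotheses, but it follows by structural induction on the polynomial $t$: the base case $t(x)=x$ is trivial, and the inductive step $t=a[b_1\ldots b_{i-1}\,t'\,b_{i+1}\ldots b_n]$ reduces to applying $i$-regularity to the pair $(t'(x),t'(y))\in\varepsilon$ given by the induction hypothesis. In fact the paper already uses exactly this reduction implicitly in Propositions~\ref{P2.3} and~\ref{P2.4} (writing $t(u[\overline{w}\,|_ia])$ and invoking $i$-regularity), so I would cite it in the same style rather than reprove it. Everything else is a direct translation of the Proposition~\ref{P2.3} argument, with ``normal $v$-complex'' replaced by ``$\varepsilon$-class'', and I do not expect any genuine obstacle beyond bookkeeping.
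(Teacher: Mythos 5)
Your proposal is correct and follows essentially the same route as the paper: the paper simply cites Proposition~\ref{P2.4} to get $\varepsilon\subseteq\mathcal{R}_H$ and then notes $h\notin W_H$, whereas you inline the first half of that proposition's proof (including the induction from $i$-regularity to $(t(x),t(y))\in\varepsilon$ for arbitrary $t\in T_n(G)$, which the paper leaves implicit). No gap; the extra detail you supply is just a more explicit version of the same argument.
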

\begin{proof}
Let $H$ be a $v$-admissible subset of a Menger algebra $(G,o)$. Then it is some $\varepsilon$-class of a
$v$-congruence $\varepsilon$. Therefore, by Proposition \ref{P2.4}, we obtain
$\varepsilon\subseteq\mathcal{R}_H$. So, $H$ is contained in some $\mathcal{R}_H$-class. If
$\rho_{_H}\langle h\rangle\neq\varnothing$ for some $h\in H,$ then $h\not\in W_H$. This means that the
$\mathcal{R}_H$-class containing $H$ cannot be the set $W_H$.
\end{proof}

\begin{definition}\rm
A subset $H$ of a Menger algebra $(G,o)$ is called \textit{strong} if
\begin{equation}\label{e7}
  \rho_{_H}\langle a\rangle\cap\rho_{_H}\langle b\rangle\neq\varnothing\longrightarrow
  \rho_{_H}\langle a\rangle=\rho_{_H}\langle b\rangle
\end{equation}
for all $a,b\in G$.
\end{definition}

\begin{theorem}\label{T2.8}
For a subset $H$ of a Menger algebra $(G,o)$ the following
conditions are equivalent:
\begin{itemize}
  \item [$(a)$] \ $H$ is strong.
  \item [$(b)$] \ For any $x,y\in G$ and $t_1,t_2\in T_n(G)$
\begin{equation}\label{e8}
t_1(x)\in H\wedge t_1(y)\in H\wedge t_2(y)\in H\longrightarrow t_2(x)\in H .
\end{equation}
  \item [$(c)$] \ For any $t_1,t_2\in T_n(G)$
\begin{equation}\label{e9}
  \rho^{\circ}_{_H}\langle t_1\rangle\cap\rho^{\circ}_{_H}\langle
t_2\rangle\neq\varnothing\longrightarrow\rho^{\circ}_{_H}\langle
t_1\rangle  =\rho^{\circ}_{_H}\langle t_2\rangle .
\end{equation}
\end{itemize}
\end{theorem}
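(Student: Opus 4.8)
The plan is to prove the two equivalences $(a)\Leftrightarrow(b)$ and $(b)\Leftrightarrow(c)$. Condition $(b)$ is the natural ``symmetric'' form of the strongness condition: it mentions two elements $x,y$ and two translations $t_1,t_2$ on an equal footing, so that $(a)$ (a condition on the element-indexed sets $\rho_{_H}\langle a\rangle$) and $(c)$ (a condition on the translation-indexed sets $\rho^{\circ}_{_H}\langle t\rangle$) become two instances of one and the same argument, with the roles of elements and translations interchanged. The whole proof is an unfolding of definitions; the only care needed is to track which variable plays which role when invoking $(b)$.

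For $(a)\Rightarrow(b)$: given $x,y\in G$ and $t_1,t_2\in T_n(G)$ with $t_1(x)\in H$, $t_1(y)\in H$, $t_2(y)\in H$, note that $t_1\in\rho_{_H}\langle x\rangle\cap\rho_{_H}\langle y\rangle$, so strongness gives $\rho_{_H}\langle x\rangle=\rho_{_H}\langle y\rangle$; since $t_2(y)\in H$ means $t_2\in\rho_{_H}\langle y\rangle=\rho_{_H}\langle x\rangle$, we conclude $t_2(x)\in H$. For $(b)\Rightarrow(a)$: if $\rho_{_H}\langle a\rangle\cap\rho_{_H}\langle b\rangle\ne\varnothing$, choose $t_1$ in the intersection (so $t_1(a)\in H$ and $t_1(b)\in H$); then for any $t_2\in\rho_{_H}\langle a\rangle$ apply $(b)$ with the pair $(x,y)=(b,a)$ to the hypotheses $t_1(b)\in H$, $t_1(a)\in H$, $t_2(a)\in H$ and obtain $t_2(b)\in H$, i.e.\ $t_2\in\rho_{_H}\langle b\rangle$; the symmetric choice $(x,y)=(a,b)$ gives the reverse inclusion, hence $\rho_{_H}\langle a\rangle=\rho_{_H}\langle b\rangle$.

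For $(b)\Leftrightarrow(c)$: run the same bookkeeping with elements and translations swapped. For $(c)\Rightarrow(b)$: from $t_1(x),t_1(y),t_2(y)\in H$ observe $y\in\rho^{\circ}_{_H}\langle t_1\rangle\cap\rho^{\circ}_{_H}\langle t_2\rangle$, so $(c)$ yields $\rho^{\circ}_{_H}\langle t_1\rangle=\rho^{\circ}_{_H}\langle t_2\rangle$; since $x\in\rho^{\circ}_{_H}\langle t_1\rangle$, this forces $x\in\rho^{\circ}_{_H}\langle t_2\rangle$, i.e.\ $t_2(x)\in H$. For $(b)\Rightarrow(c)$: if $\rho^{\circ}_{_H}\langle t_1\rangle\cap\rho^{\circ}_{_H}\langle t_2\rangle$ contains some $y$, then for any $x\in\rho^{\circ}_{_H}\langle t_1\rangle$ we have $t_1(x)\in H$, $t_1(y)\in H$, $t_2(y)\in H$, so $(b)$ gives $t_2(x)\in H$, i.e.\ $x\in\rho^{\circ}_{_H}\langle t_2\rangle$; interchanging $t_1,t_2$ gives the other inclusion.

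\textbf{Main obstacle.} There is essentially none: the statement is a definition-chase. The only point with any content is recognizing that $(b)$ is the self-dual form of the condition, which is what makes both equivalences collapse to a single symmetric argument; I would present the proof so that this symmetry is visible rather than repeating near-identical computations four times.
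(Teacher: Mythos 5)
Your proposal is correct; every implication is a valid unfolding of the definitions, and your $(a)\Rightarrow(b)$ and $(b)\Rightarrow(c)$ arguments coincide with the paper's. The only difference is the decomposition: the paper closes a cycle $(a)\Rightarrow(b)\Rightarrow(c)\Rightarrow(a)$, so its final step $(c)\Rightarrow(a)$ must mediate in one argument between the element-indexed sets $\rho_{_H}\langle a\rangle$ and the translation-indexed sets $\rho^{\circ}_{_H}\langle t\rangle$ (pick $t_1$ in the intersection, pass to $\rho^{\circ}_{_H}\langle t_1\rangle$, compare with $\rho^{\circ}_{_H}\langle t_2\rangle$ for an arbitrary $t_2\in\rho_{_H}\langle a\rangle$, and translate back), which is the longest paragraph of their proof. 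You instead use $(b)$ as a hub and prove the two converses $(b)\Rightarrow(a)$ and $(c)\Rightarrow(b)$ directly, each a one-step appeal to the symmetric condition \eqref{e8} or \eqref{e9}; this costs one extra implication (four instead of three) but each is shorter, and it makes the element/translation duality between $(a)$ and $(c)$ explicit rather than incidental. Mathematically the two routes are interchangeable; yours is arguably the cleaner exposition, the paper's is the more economical count of implications.
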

\begin{proof}
$(a)\longrightarrow (b)$. Let $H$ be a strong subset of a Menger algebra $(G,o)$. If $t_1(x),t_1(y),t_2(y)\in H$ for some $x,y\in G$ and $t_1,t_2\in T_n(G)$, then $t_1\in\rho_{_H}\langle x\rangle\cap\rho_{_H}\langle y\rangle$. Thus $\rho_{_H}\langle x\rangle\cap\rho_{_H}\langle y\rangle\neq\varnothing$, so
$\rho_{_H}\langle x\rangle=\rho_{_H}\langle y\rangle$. But $t_2(y)\in H$ means that $t_2\in\rho_{_H}\langle y\rangle$. Hence $t_2\in\rho_{_H}\langle x\rangle$, i.e., $t_2(x)\in H$. This proves $(b)$.

$(b)\longrightarrow (c)$. Suppose that $\rho^{\circ}_{_H}\langle
t_1\rangle\cap\rho^{\circ}_{_H}\langle t_2\rangle\neq\varnothing$ for some $t_1,t_2\in T_n(G)$.
 Then there exists $x\in G$ such that $x\in\rho^{\circ}_{_H}\langle t_1\rangle\cap\rho^{\circ}_{_H}
 \langle t_2\rangle$, i.e., $t_1(x)\in H$ and $t_2(x)\in H.$
 If $y\in\rho^{\circ}_{_H}\langle t_1\rangle$, i.e., $t_1(y)\in H,$ then, by \eqref{e8}, we have $t_2(y)\in H,$ i.e.,
$y\in\rho^{\circ}_{_H}\langle t_2\rangle$. So, $\rho^{\circ}_{_H}\langle t_1\rangle\subseteq\rho^{\circ}_{_H}\langle t_2\rangle$. Similarly
we obtain $\rho^{\circ}_{_H}\langle t_2\rangle\subseteq\rho^{\circ}_{_H}\langle t_1\rangle$. This proves $(c)$.

$(c)\longrightarrow (a)$. Let $\rho_{_H}\langle
a\rangle\cap\rho_{_H}\langle b\rangle\neq\varnothing$ for some
$a,b\in G.$ Then $t_1\in\rho_{_H}\langle a\rangle\cap\rho_{_H}
\langle b\rangle$ for some $t_1\in T_n(G)$, i.e., $t_1(a)$ and
$t_1(b)$ are in $H.$ Consequently, $a,b\in\rho^{\circ}_{_H}\langle
t_1\rangle$. Let $t_2$ is any translation from $\rho_{_H}\langle
a\rangle$, then $t_2(a)\in H$, i.e., $a\in\rho^{\circ}_{_H}\langle
t_2\rangle$. Therefore $\rho^{\circ}_{_H}\langle
t_1\rangle\cap\rho^{\circ}_{_H}\langle t_2\rangle\neq\varnothing$
which, by \eqref{e9}, gives $\rho^{\circ}_{_H}\langle
t_1\rangle=\rho^{\circ}_{_H}\langle t_2 \rangle$. As
$b\in\rho^{\circ}_{_H}\langle t_1\rangle$, then, obviously,
$b\in\rho^{\circ}_{_H}\langle t_2\rangle$, i.e., $t_2(b)\in H$,
whence $t_2\in\rho_{_H}\langle b\rangle$. So, $\rho_{_H}\langle
a\rangle\subseteq\rho_{_H}\langle b\rangle$. The reverse inclusion
follows similarly. Thus, $\rho_{_H}\langle
a\rangle=\rho_{_H}\langle b\rangle$. So, $H$ is strong.
\end{proof}

\noindent{\bf Remark.} The empty subset of a Menger algebra is strong since for $H=\varnothing$ the premise and the conclusion of the implication (\ref{e8}) are false, so it is true.

\begin{proposition}\label{P2.9}
Every nonempty strong subset $H$ of a Menger algebra $(G,o)$ is an $\mathcal{R}_H$-class different from $W_H$.
\end{proposition}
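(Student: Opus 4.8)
The plan is to reduce this to Proposition \ref{P2.3}. The key observation is that \emph{every strong subset is a normal $v$-complex}: condition $(b)$ of Theorem \ref{T2.8}, applied with $t_1$ equal to the identity translation, says precisely that $g_2\in H$, $g_1\in H$ and $t_2(g_1)\in H$ force $t_2(g_2)\in H$, which is the defining implication of a normal $v$-complex. Since $H$ is assumed nonempty, Proposition \ref{P2.3} then immediately yields that $H$ is an $\mathcal{R}_H$-class different from $W_H$.

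If one prefers a self-contained argument not invoking Proposition \ref{P2.3}, I would proceed in three steps. First, show $H$ lies in a single $\mathcal{R}_H$-class: given $h_1,h_2\in H$ and any $t\in T_n(G)$ with $t(h_1)\in H$, apply Theorem \ref{T2.8}$(b)$ with $x=h_2$, $y=h_1$, $t_1=\mathrm{id}$, $t_2=t$ to obtain $t(h_2)\in H$; by symmetry $t(h_1)\in H\Leftrightarrow t(h_2)\in H$, so $(h_1,h_2)\in\mathcal{R}_H$ by \eqref{e5}. Call this common class $X$. Second, show $X\subseteq H$: pick some $h\in H$ (possible since $H\neq\varnothing$) and any $g\in X$; then $(g,h)\in\mathcal{R}_H$, and since the identity translation sends $h$ into $H$, it must send $g$ into $H$ too, i.e.\ $g\in H$; hence $X=H$. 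Third, since the identity translation maps any $h\in H$ into $H$, we have $\rho_{_H}\langle h\rangle\neq\varnothing$, so $h\notin W_H$, and therefore the class $H$ is distinct from $W_H$.

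I do not expect a real obstacle here: the statement is essentially a corollary of the machinery already developed. The only point needing a little care is the second step, where one must argue that $H$ is not merely \emph{contained in} an $\mathcal{R}_H$-class but coincides with one; this is settled by testing the relation against the identity translation, exactly as in the proof of Proposition \ref{P2.3}.
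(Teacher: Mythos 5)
Your proposal is correct and follows essentially the same route as the paper: the paper's proof also sets $t_1$ equal to the identity translation in condition $(b)$ of Theorem \ref{T2.8} to conclude that a strong subset is a normal $v$-complex, and then invokes Proposition \ref{P2.3}. Your optional self-contained argument merely inlines the proof of Proposition \ref{P2.3}, so there is no genuine difference in method.
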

\begin{proof}
Let $H\neq\varnothing$ be a strong subset of a Menger algebra
$(G,o)$. Then it satisfies the condition $(b)$ of Theorem
\ref{T2.8}. Replacing $t_1$ in (\ref{e8}) by the identity
translation we see that $H$ is a normal $v$-complex. Proposition
\ref{P2.3} completes the proof.
\end{proof}

\begin{proposition}\label{P2.10}
Let $H$ be a strong subset of a Menger algebra $(G,o)$ of rank $n$. Then all $\mathcal{R}_H $-classes are  nonempty members of the family
$$
\mathcal{K}=\{\rho^{\circ}_{_H}\langle t\rangle\,|\,t\in T_n(G)\}\cup\{W_H\}.
$$
\end{proposition}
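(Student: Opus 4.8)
The plan is to fix an arbitrary $\mathcal{R}_H$-class $X$ and show it is one of the sets listed in $\mathcal{K}$; nonemptiness is automatic, since equivalence classes are never empty, so the real content is membership in $\mathcal{K}$. I would distinguish two cases according to whether $X=W_H$ or not. If $X=W_H$ there is nothing to prove. Otherwise pick any $a\in X$; then $a\notin W_H$, so by \eqref{e6} (equivalently, by the definition of $W_H$) we have $\rho_{_H}\langle a\rangle\neq\varnothing$, and we may choose a translation $t\in\rho_{_H}\langle a\rangle$, i.e. $t(a)\in H$, equivalently $a\in\rho^{\circ}_{_H}\langle t\rangle$. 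The claim to establish is then $X=\rho^{\circ}_{_H}\langle t\rangle$, which puts $X$ into $\mathcal{K}$.

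For the inclusion $X\subseteq\rho^{\circ}_{_H}\langle t\rangle$ I would argue as follows: if $b\in X$ then $(a,b)\in\mathcal{R}_H$, so by definition of $\mathcal{R}_H$ we have $\rho_{_H}\langle a\rangle=\rho_{_H}\langle b\rangle$; since $t$ lies in the first set it lies in the second, hence $t(b)\in H$, that is $b\in\rho^{\circ}_{_H}\langle t\rangle$. This step uses only the definition of $\mathcal{R}_H$ and not strongness.

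For the reverse inclusion $\rho^{\circ}_{_H}\langle t\rangle\subseteq X$, take $b$ with $t(b)\in H$. Then $t\in\rho_{_H}\langle a\rangle$ and $t\in\rho_{_H}\langle b\rangle$ simultaneously, so $\rho_{_H}\langle a\rangle\cap\rho_{_H}\langle b\rangle\neq\varnothing$; because $H$ is strong, the implication \eqref{e7} forces $\rho_{_H}\langle a\rangle=\rho_{_H}\langle b\rangle$, whence $(a,b)\in\mathcal{R}_H$ and $b\in X$. Combining the two inclusions gives $X=\rho^{\circ}_{_H}\langle t\rangle\in\mathcal{K}$, as required. The only genuinely non-bookkeeping step is this last inclusion, which is exactly the point where strongness is used; everything else is unwinding the definitions of $\mathcal{R}_H$, $W_H$, $\rho_{_H}\langle\cdot\rangle$ and $\rho^{\circ}_{_H}\langle\cdot\rangle$, so I do not expect any real obstacle.
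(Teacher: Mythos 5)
Your proof is correct, and its mathematical core is the same as the paper's: the inclusion $\rho^{\circ}_{_H}\langle t\rangle\subseteq X$ obtained from \eqref{e7} is precisely the step where the paper uses strongness, and the inclusion $X\subseteq\rho^{\circ}_{_H}\langle t\rangle$ is the same definitional unwinding. The organization differs, though: the paper first shows that $\mathcal{K}$ is a partition of $G$ (covering, plus pairwise disjointness, the latter again via strongness in the form \eqref{e9}) and only afterwards matches the $\mathcal{R}_H$-classes with the members of $\mathcal{K}$, whereas you fix a class $X\neq W_H$, choose $t\in\rho_{_H}\langle a\rangle$ for some $a\in X$, and prove directly that $X=\rho^{\circ}_{_H}\langle t\rangle$. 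Your route is leaner for the literal statement, but the paper's version simultaneously yields the converse fact that every nonempty $\rho^{\circ}_{_H}\langle t\rangle$ is itself an $\mathcal{R}_H$-class, which is what is actually invoked later (Corollary \ref{C2.12}, Theorem \ref{T2.15}); note your argument gives this as well with one extra line, by applying it to the $\mathcal{R}_H$-class of any $a\in\rho^{\circ}_{_H}\langle t\rangle$, since then $t\in\rho_{_H}\langle a\rangle$. One small point you should make explicit: the step ``pick $a\in X$; then $a\notin W_H$'' relies on the observation, recorded in the paper just before \eqref{e5}, that $W_H$, when nonempty, is itself a single $\mathcal{R}_H$-class, so a class different from $W_H$ cannot meet it.
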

\begin{proof}
Firts, we show that $\mathcal{K}$ is a partition of $G$. Obviously, $\bigcup\mathcal{K}\subseteq G$ since $W_H\subset G$ and $\rho^{\circ}_{_H}\langle t\rangle\subset G$ for each $t\in T_n (G)$. Let $g$ be an arbitrary element of $G.$
Then either $t(g)\not\in H$ for all $t\in T_n(G)$, or $t(g)\in H$ for some $t\in T_n(G)$. In the first case
$g\in W_H$, in the second $g\in\rho^{\circ}_{_H}\langle t\rangle$ for some $t\in T_n(G)$. So,
$G\subseteq\bigcup\mathcal{K}$. Thus, $G=\bigcup\mathcal{K}$. This means that $\mathcal{K}$ covers $G.$

If $g\in\rho^{\circ}_{_H}\langle t_1\rangle\cap W_H$, where
$t_1\in T_n(G)$, then $t_1(g)\in H$ and $t(g)\not\in H$ for all
$t\in T_n(G)$. So, $t_1(g)\in H$ and $t_1(g)\not\in H.$ This is
impossible. Therefore $\rho^{\circ}_{_H}\langle t_1\rangle\cap
W_H=\varnothing$ for each $t_1\in T_n(G)$. Now, if
$g\in\rho^{\circ}_{_H}\langle
t_1\rangle\cap\rho^{\circ}_{_H}\langle t_2\rangle$, where
$t_1,t_2\in T_n(G)$, then $\rho^{\circ}_{_H}\langle
t_1\rangle\cap\rho^{\circ}_{_H}\langle t_2\rangle\neq\varnothing$.
Thus $\rho^{\circ}_{_H}\langle t_1\rangle=\rho^{\circ}_{_H}\langle
t_2\rangle$ because $H$ is a strong subset of $(G,o)$. So, if
$\rho^{\circ}_{_H}\langle t_1\rangle\neq\rho^{\circ}_{_H}\langle
t_2\rangle$, then $\rho^{\circ}_{_H}\langle
t_1\rangle\cap\rho^{\circ}_{_H}\langle t_2\rangle=\varnothing$.
Hence $\mathcal{K}$ is a partition of $G$.

Now let $(a,b)\in\mathcal{R}_H$, i.e., $\rho_{_H}\langle a\rangle=\rho_{_H}\langle b\rangle$. If $\rho_{_H}\langle a\rangle=\varnothing$, then $a,b\in W_H$, and as it is noted above,
$W_H$ is one of the $\mathcal{R}_H$-classes. If $\rho_{_H}\langle a\rangle\neq\varnothing$, then there is a translation $t\in T_n(G)$ such that $t\in\rho_{_H}\langle a\rangle=\rho_{_H}\langle b\rangle$, so $t(a)\in H$ and $t(b)\in H.$ Consequently, $a,b\in\rho^{\circ}_{_H}\langle t\rangle$.

Conversely, if $a,b\in\rho^{\circ}_{_H}\langle t\rangle$ for some $t\in T_n (G)$, then $t(a)\in H$ and $t(b)\in H$, so $t\in \rho_{_H}\langle a\rangle\cap\rho_{_H}\langle b\rangle$.
Thus, $\rho_{_H}\langle a\rangle\cap\rho_{_H}\langle b\rangle\neq\varnothing$, hence, $\rho_{_H}\langle
a\rangle=\rho_{_H}\langle b\rangle$, i.e., $(a,b)\in\mathcal{R}_H$. Therefore $\rho^{\circ}_{_H}\langle
t\rangle$ is the $\mathcal{R}_H$-class. In this way we have shown that $\mathcal{K}$ is a family of $\mathcal{R}_H$-classes.
\end{proof}

\begin{theorem}\label{T2.11}
Let $H$ be a strong subset of a Menger algebra $(G,o)$. If $X\neq W_H$ is an
$\mathcal{R}_H $-class, then $X$ is a strong subset of $(G,o)$ such that
$W_H\subseteq W_X$ and $\mathcal{R}_H\subseteq\mathcal{R}_X$.
Moreover, the restriction  of $\mathcal{R}_H$ to $G\!\setminus\!W_X$ coincides with
$\mathcal{R}^*_X$.
\end{theorem}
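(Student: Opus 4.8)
The plan is to invoke Proposition \ref{P2.10}: since $X\neq W_H$ is an $\mathcal{R}_H$-class, it is a nonempty member of $\mathcal{K}$ other than $W_H$, hence $X=\rho^{\circ}_{_H}\langle t_0\rangle$ for some $t_0\in T_n(G)$. The observation that drives the whole proof is that for every $g\in G$ and every $t\in T_n(G)$ one has $t(g)\in X$ if and only if $(t_0\circ t)(g)\in H$, where $t_0\circ t\in T_n(G)$; thus each assertion of the form ``$\,\cdot\in X$'' rewrites as one of the form ``$\,\cdot\in H$'' after composing the translations involved with $t_0$ on the left.

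First I would prove that $X$ is strong by checking condition $(b)$ of Theorem \ref{T2.8}: if $t_1(x),t_1(y),t_2(y)\in X$, rewrite this as $(t_0\circ t_1)(x),(t_0\circ t_1)(y),(t_0\circ t_2)(y)\in H$ and apply the strongness of $H$ (Theorem \ref{T2.8}$(b)$) with $t_0\circ t_1,\ t_0\circ t_2$ in place of $t_1,t_2$, obtaining $(t_0\circ t_2)(x)\in H$, i.e. $t_2(x)\in X$. The same rewriting combined with the characterizations \eqref{e5} and \eqref{e6} gives $W_H\subseteq W_X$ (if $t(a)\notin H$ for all $t$, then $(t_0\circ t)(a)\notin H$ for all $t$, so $a\in W_X$) and $\mathcal{R}_H\subseteq\mathcal{R}_X$ (if $t(a)\in H\Leftrightarrow t(b)\in H$ for all $t$, then $(t_0\circ t)(a)\in H\Leftrightarrow(t_0\circ t)(b)\in H$, i.e. $t(a)\in X\Leftrightarrow t(b)\in X$, for all $t$).

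For the ``moreover'' part it remains to show $\mathcal{R}_H\cap(G\setminus W_X)\times(G\setminus W_X)=\mathcal{R}^*_X$; recalling that $\mathcal{R}^*_X=\mathcal{R}_X\cap(G\setminus W_X)\times(G\setminus W_X)$, the inclusion ``$\subseteq$'' is immediate from $\mathcal{R}_H\subseteq\mathcal{R}_X$. For ``$\supseteq$'', take $a,b\in G\setminus W_X$ with $(a,b)\in\mathcal{R}_X$. Since $a\notin W_X$, by \eqref{e6} there is $t_1\in T_n(G)$ with $t_1(a)\in X$, and $\rho_{_X}\langle a\rangle=\rho_{_X}\langle b\rangle$ forces $t_1(b)\in X$; hence $(t_0\circ t_1)(a)\in H$ and $(t_0\circ t_1)(b)\in H$. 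Now for any $t\in T_n(G)$ with $t(a)\in H$, apply Theorem \ref{T2.8}$(b)$ to the strong set $H$ with $x:=b$, $y:=a$ and translations $t_0\circ t_1$ and $t$: from $(t_0\circ t_1)(b),(t_0\circ t_1)(a),t(a)\in H$ we get $t(b)\in H$; interchanging $a$ and $b$ yields the converse implication, so $\rho_{_H}\langle a\rangle=\rho_{_H}\langle b\rangle$, i.e. $(a,b)\in\mathcal{R}_H$. This gives the desired equality.

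I expect this last inclusion to be the only real obstacle: it is the one point where one genuinely uses that $X$ is an $\mathcal{R}_H$-class — so that $X=\rho^{\circ}_{_H}\langle t_0\rangle$ and a single witnessing translation $t_1$ suffices to transfer $\mathcal{R}_X$-equivalence of $a,b$ into the three-translation form of strongness for $H$. Everything else is routine bookkeeping built on the equivalence $t(g)\in X\Leftrightarrow(t_0\circ t)(g)\in H$.
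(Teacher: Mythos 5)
Your proposal is correct and takes essentially the same route as the paper's proof: both invoke Proposition \ref{P2.10} to write $X=\rho^{\circ}_{_H}\langle t_0\rangle$ and then translate every statement ``$\,\cdot\in X$'' into ``$\,\cdot\in H$'' by composing translations with $t_0$, finishing each step with the strongness of $H$. The only cosmetic difference is that you check strongness of $X$ and the final inclusion through condition $(b)$ of Theorem \ref{T2.8}, whereas the paper argues directly from the definition \eqref{e7}; these are equivalent.
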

\begin{proof}
Since $\mathcal{R}_H$-class $X$ is different from $W_H$, there
exists $t\in T_n(G)$ such that $X=\rho^{\circ}_{_H}\langle
t\rangle$ (Proposition \ref{P2.10}). Let $y,z\in G$ and
$\rho_{_X}\langle y\rangle\cap\rho_{_X}\langle
z\rangle\neq\varnothing$. Then for some translation $t_1\in
T_n(G)$ we have $t_1\in\rho_{_X}\langle y\rangle\cap\rho_{_X}
\langle z\rangle$. This means that $t_1(y)$ and $t_1(z)$ are in
$\rho^{\circ}_{_H}\langle t \rangle$. So, $t(t_1(y))$ and
$t(t_1(z))$ are in $H.$ Consequently, $(t\circ t_1)(y)$ and
$(t\circ t_1)(z)$ also are in $H,$ i.e., $t\circ
t_1\in\rho_{_H}\langle y\rangle$ and $t\circ t_1\in
\rho_{_H}\langle z\rangle$, whence $\rho_{_H}\langle
y\rangle\cap\rho_{_H}\langle z\rangle\neq\varnothing$. So
$\rho_{_H}\langle y\rangle=\rho_{_H}\langle z\rangle$, because $H$
is a strong subset.
 Now let $t_2$ be an arbitrary
translation of $\rho_{_X}\langle y\rangle$. Then $t_2(y)\in X$,
i.e., $t_2 (y)\in\rho^{\circ}_{_H}\langle t\rangle$. Therefore,
$t(t_2(y))\in H$, which gives $t\circ t_2\in\rho_{_H}\langle
y\rangle=\rho_{_H}\langle z\rangle$. So, $t\circ
t_2\in\rho_{_H}\langle z\rangle$, i.e., $t(t_2(z))\in H.$ This
implies $t_2(z)\in\rho^{\circ}_{_H}\langle t\rangle$, which means
that $t_2(z)\in X$. Thus, $t_2\in\rho_{_X}\langle z\rangle$.
Therefore, $\rho_{_X}\langle y\rangle\subseteq\rho_{_X}\langle
z\rangle$. Similarly, we show that $\rho_{_X}\langle
z\rangle\subseteq\rho_{_X}\langle y\rangle$. So, $\rho_{_X}\langle
y\rangle=\rho_{_X}\langle z\rangle$. Hence, $\rho_{_X}\langle
y\rangle\cap\rho_{_X}\langle z\rangle\neq\varnothing$ implies
$\rho_{_X}\langle y\rangle=\rho_{_X}\langle z\rangle$. This means
that $X$ is a strong subset.

Now assume that $(a,b)\in\mathcal{R}_H$, i.e., $\rho_{_H}\langle a\rangle=\rho_{_H}\langle b\rangle$. Let
$t_3\in\rho_{_X}\langle a\rangle$, where $t_3\in T_n(G)$. Then $t_3(a)\in X,$ i.e., $t_3(a)\in\rho^{\circ}_{_H}\langle t\rangle$. Therefore, $(t\circ t_3)(a)\in H,$ so $t\circ t_3\in \rho_{H}\langle a\rangle=\rho_{_H}\langle b\rangle$. Thus, $t\circ t_3\in\rho_{_H}\langle b\rangle$, i.e., $t(t_3(b))\in H,$ whence $t_3(b)\in\rho^{\circ}_{_H}\langle t\rangle$. This gives $t_3(b)\in X.$ Consequently, $t_3\in \rho_{_X}\langle b\rangle$. Therefore, $\rho_{_X}\langle a\rangle\subseteq\rho_{_X}\langle b\rangle$.
Similarly, we show that $\rho_{_X}\langle b\rangle\subseteq\rho_{_X}\langle a\rangle$. Thus,
$\rho_{_X}\langle a\rangle=\rho_{_X}\langle b\rangle$, i.e., $(a,b)\in\mathcal{R}_X$. So,
$\mathcal{R}_H \subseteq \mathcal{R}_X$.

From this inclusion it follows that $W_H$ is contained in some $\mathcal{R}_X$-class.
Let $a\in W_H$, i.e., $\rho_{_H}\langle a\rangle=\varnothing$.
Then also $\rho_{_X}\langle a\rangle=\varnothing$. Indeed,
$\rho_{_X}\langle a\rangle\neq\varnothing$ means that $t_4(a)\in X=\rho^{\circ}_{_H}\langle t
\rangle$ for some $t_4\in\rho_{_X}\langle a\rangle$. So, $t_4(a)\in\rho^{\circ}_{_H}\langle t\rangle$, which implies $t(t_4(a))\in H.$ Thus, $(t\circ t_4)(a)\in H,$ and consequently, $t\circ t_4\in\rho_{_H}\langle a\rangle$. Hence, $\rho_{_H}\langle a\rangle\neq\varnothing$, i.e., $a\not\in W_H$. This impossible because by the assumption $a\in W_H$. Obtained contradiction proves that $\rho_{_X}\langle a\rangle=\varnothing$. Consequently, $a\in W_X$ and $W_H\subseteq W_X$.

Since $\mathcal{R}_H\subseteq \mathcal{R}_X$, the set $W_X$ is a
union of $\mathcal{R}_H$-classes. To prove that $\mathcal{R}_H$
and $\mathcal{R}^*_X$ are coincide on the set $G\!\setminus\!W_X$
it suffices to show that $(a,b)\in\mathcal{R}_X$ implies
$(a,b)\in\mathcal{R}_H$ for all $a,b\in G\!\setminus\!W_X$. Let
$(a,b)\in\mathcal{R}_X$ for some $a,b\in G\!\setminus\!W_X$. Then
$\rho_{_X}\langle a\rangle=\rho_{_X}\langle
b\rangle\neq\varnothing$, so there exists a translation $t_5\in
T_n(G)$ such that $t_5(a)$ and $t_5 (b)$ are in
$X=\rho^{\circ}_{_H}\langle t\rangle$. Therefore, $t(t_5(a))$ and
$t(t_5(b))$ are in $H.$ So, $(t\circ t_5)(a)$ and $(t\circ
t_5)(b)$ also are in $H.$ Thus, $t\circ t_5\in\rho_{_H}\langle
a\rangle\cap\rho_{_H}\langle b\rangle$, which shows
$\rho_{_H}\langle a\rangle\cap\rho_{_H}\langle
b\rangle\neq\varnothing$. Since $H$ is strong, the last implies
$\rho_{_H}\langle a\rangle=\rho_{_H}\langle b\rangle$. Therefore,
$(a,b)\in\mathcal{R}_H$. Thus,
$\mathcal{R}^*_X=\mathcal{R}_H\cap(G\!\setminus\!W_X)\times(G\!\setminus\!W_X)$.
\end{proof}

\begin{collolary}\label{C2.12}
Let $H$ be a strong subset of a Menger algebra $(G,o)$. Then, for any translation $t\in T_n(G)$, the set
$\rho^{\circ}_{_H}\langle t\rangle$ is strong.
\end{collolary}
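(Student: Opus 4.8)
The plan is to deduce this immediately from Proposition \ref{P2.10} and Theorem \ref{T2.11}, treating the empty set as a trivial separate case. Fix a translation $t\in T_n(G)$.

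First I would dispose of the degenerate situation: if $\rho^{\circ}_{_H}\langle t\rangle=\varnothing$, then it is strong by the Remark following Theorem \ref{T2.8}, since the empty subset of a Menger algebra is vacuously strong (the premise of \eqref{e8} fails).

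Next, assuming $\rho^{\circ}_{_H}\langle t\rangle\neq\varnothing$, the key point is that $\rho^{\circ}_{_H}\langle t\rangle$ is then an $\mathcal{R}_H$-class. This is essentially already contained in Proposition \ref{P2.10}: its proof shows that $\mathcal{K}$ is a partition of $G$ all of whose members are $\mathcal{R}_H$-classes, and since the $\mathcal{R}_H$-classes themselves partition $G$, every nonempty member of $\mathcal{K}$ — in particular $\rho^{\circ}_{_H}\langle t\rangle$ — must coincide with one of these classes. Moreover $\rho^{\circ}_{_H}\langle t\rangle\cap W_H=\varnothing$ (as noted in the proof of Proposition \ref{P2.10}, an element lying in both would satisfy $t(g)\in H$ and $t(g)\notin H$), so, being nonempty, this class is distinct from $W_H$.

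Finally I would apply Theorem \ref{T2.11}: since $H$ is strong and $\rho^{\circ}_{_H}\langle t\rangle$ is an $\mathcal{R}_H$-class different from $W_H$, it is a strong subset of $(G,o)$, which completes the proof. There is essentially no obstacle here; the only subtlety worth flagging is the distinction between ``$\rho^{\circ}_{_H}\langle t\rangle$ is contained in some $\mathcal{R}_H$-class'' and ``$\rho^{\circ}_{_H}\langle t\rangle$ equals an $\mathcal{R}_H$-class'' — it is the latter that is needed to invoke Theorem \ref{T2.11}, and it is precisely what Proposition \ref{P2.10} supplies.
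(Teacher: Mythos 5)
Your proposal is correct and follows exactly the paper's own argument: dispose of the empty case via the Remark, use Proposition \ref{P2.10} to identify a nonempty $\rho^{\circ}_{_H}\langle t\rangle$ as an $\mathcal{R}_H$-class different from $W_H$, and then invoke Theorem \ref{T2.11}. The extra care you take in distinguishing ``contained in a class'' from ``equal to a class'' is a reasonable elaboration of what Proposition \ref{P2.10} already establishes.
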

\begin{proof}
It was noted above that the empty subset is strong, so if for some $t\in T_n(G)$, we have
$\rho^{\circ}_{_H}\langle t\rangle=\varnothing$, then, obviously, $\rho^{\circ}_{_H}\langle t\rangle$ is strong. If $\rho^{\circ}_{_H}\langle t\rangle\neq\varnothing$, then $\rho^{\circ}_{_H}\langle t\rangle$ is
an $\mathcal{R}_H$-class different from $W_H$ (Proposition \ref{P2.10}). Therefore, by Theorem \ref{T2.11},
$\rho^{\circ}_{_H}\langle t\rangle$ is a strong subset of $(G,o)$.
\end{proof}

\begin{proposition}\label{P2.13}
If a strong subset $H$ of a Memger algebra $(G,o)$ is its $l$-ideal, then $\mathcal{R}_H=\mathcal{R}_X$ for any $X\ne W_H$.
\end{proposition}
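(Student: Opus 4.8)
The plan is to pull two facts out of the $l$-ideal hypothesis and then hand everything to Theorem~\ref{T2.11}. I begin with reductions: if $W_H$ is the only $\mathcal{R}_H$-class there is nothing to prove, so let $X\neq W_H$ be an $\mathcal{R}_H$-class. By Proposition~\ref{P2.10} we may write $X=\rho^{\circ}_{_H}\langle t\rangle$ for some $t\in T_n(G)$, and since $X$ is nonempty, $H\neq\varnothing$. If $X=H$ as a subset of $G$, then $\rho_{_X}\langle a\rangle=\rho_{_H}\langle a\rangle$ for every $a\in G$, so $\mathcal{R}_X=\mathcal{R}_H$ and we are done; hence assume $X\neq H$. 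As $H=\rho^{\circ}_{_H}\langle\mathrm{id}\rangle$, the translation $t$ fixed above is then not the identity.

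The key observation is: \emph{if $H$ is an $l$-ideal, then $s(h)\in H$ for all $h\in H$ and all $s\in T_n(G)$} (in other words, $H$ is closed under every elementary translation). This is an easy structural induction on the polynomial $s$: it is trivial for $s=x$, and if $s=a[b_1\ldots b_{i-1}\,s'\,b_{i+1}\ldots b_n]$ then $s(h)=a[b_1\ldots b_{i-1}\,s'(h)\,b_{i+1}\ldots b_n]$, where $s'(h)=h\in H$ when $s'=x$ and $s'(h)\in H$ by the induction hypothesis otherwise; so the $i$-th argument of this superposition lies in $H$, and the $l$-ideal property gives $s(h)\in H$.

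Now I would prove $W_X=W_H$. Theorem~\ref{T2.11} already provides $W_H\subseteq W_X$ (and also $\mathcal{R}_H\subseteq\mathcal{R}_X$, which I shall use below). For the reverse inclusion, take $a\notin W_H$ and pick $r\in T_n(G)$ with $r(a)\in H$; by the key observation $t\big(r(a)\big)\in H$, that is, $r(a)\in\rho^{\circ}_{_H}\langle t\rangle=X$, so $r\in\rho_{_X}\langle a\rangle\neq\varnothing$ and $a\notin W_X$. (Equivalently, one checks $\rho_{_H}\langle a\rangle\subseteq\rho_{_X}\langle a\rangle$ for every $a\in G$.) Thus $W_X=W_H$.

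To finish, recall from Theorem~\ref{T2.11} that the restriction of $\mathcal{R}_H$ to $G\setminus W_X$ coincides with $\mathcal{R}^*_X=\mathcal{R}_X\cap(G\setminus W_X)\times(G\setminus W_X)$; substituting $W_X=W_H$, the relations $\mathcal{R}_H$ and $\mathcal{R}_X$ agree on $(G\setminus W_H)\times(G\setminus W_H)$. If $W_H=\varnothing$, this already says $\mathcal{R}_H=\mathcal{R}_X$. Otherwise $W_H=W_X$ is a single $\mathcal{R}_H$-class and a single $\mathcal{R}_X$-class, so every $\mathcal{R}_X$-related pair lies either wholly inside $W_X$ (hence is $\mathcal{R}_H$-related, as $W_H$ is an $\mathcal{R}_H$-class) or wholly outside $W_X$ (hence $\mathcal{R}_H$-related by the agreement just noted); combined with $\mathcal{R}_H\subseteq\mathcal{R}_X$ this gives $\mathcal{R}_H=\mathcal{R}_X$. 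The only real content is the key observation (a one-line induction) and the inequality $W_X\subseteq W_H$; the main point to watch is simply that the degenerate cases $X=H$ and $W_H=\varnothing$ be made to fit this scheme.
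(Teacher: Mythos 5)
Your proof is correct and follows essentially the same route as the paper: invoke Theorem~\ref{T2.11} for $W_H\subseteq W_X$, $\mathcal{R}_H\subseteq\mathcal{R}_X$ and the agreement on $G\setminus W_X$, then use the $l$-ideal hypothesis to get $W_X\subseteq W_H$ via the closure of $H$ under all elementary translations. The only differences are that you prove that closure explicitly by induction (the paper simply asserts $t_1(H)\subseteq H$) and that you spell out the final deduction of $\mathcal{R}_H=\mathcal{R}_X$ from $W_H=W_X$, both of which are welcome but not substantively new.
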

\begin{proof}
Let $H$ be a strong subset of $(G,o)$. By Theorem \ref{T2.11}, we have $W_H\subseteq W_X$, $\mathcal{R}_H\subseteq\mathcal{R}_X$ and $\mathcal{R}^*_X=\mathcal{R}_H\cap(G\!\setminus\!W_X)\times (G\!\setminus\!W_X)$. Thus, to prove $\mathcal{R}_H=\mathcal{R}_X$ it remains to show that $W_H=W_X$.
Due to the above it is sufficient to show only $W_X \subseteq W_H$.

Let $a\in W_X$. Then $\rho_{_X}\langle a\rangle=\varnothing$, i.e., $t(a)\not\in X$ for all $t\in T_n(G)$. Since $X\neq W_H$, for some $t_1\in T_n(G)$ we have $X=\rho^{\circ}_{_H}\langle t_1\rangle$ (Proposition \ref{P2.10}). Therefore, $t(a)\not\in\rho^{\circ}_{_H}\langle t_1\rangle$ for all $t\in T_n(G)$. Thus,
\begin{equation}\label{e10}
(t_1\circ t)(a)\not\in H \ \mbox{for all } t\in T_n(G).
\end{equation}
Suppose that $a\not\in W_H$, i.e., $\rho_{_H}\langle a\rangle\neq\varnothing$. Then $t_2(a)\in H$ for some
$t_2\in T_n(G)$, whence $t_1(t_2(a))\in t_1(H)$. So, $(t_1\circ t_2)(a)\in t_1(H)\subseteq H$ since $H$ is an $l$-ideal. Hence $(t_1\circ t_2)(a)\in H$, which contradics to (\ref{e10}). Therefore $a\in W_H$ and  $W_X\subseteq W_H$. This completes the proof.
\end{proof}

We say that a $v$-congruence $\varepsilon$ on a Menger algebra $(G,o)$ is \textit{partially $v$-cancellative with $v$-residue} $W,$ if $W$ is an $\varepsilon$-class of $(G,o)$ and
$$
 (u[\overline{w}\,|_ig_1], u[\overline{w}\,|_ig_2])\in\varepsilon\wedge
  u[\overline{w}\,|_ig_1]\not\in W\longrightarrow (g_1,g_2)\in\varepsilon
$$
for all $u,g_1,g_2\in G$, $\overline{w}\in G^n$, $i=1,\ldots,n$, i.e., if $\varepsilon\cap(G\!\setminus\!W)\times (G\!\setminus\!W)$ is $v$-can\-cellative.
In the case when $W$ is the empty set we obtain a $v$-cancellative relation.

\begin{proposition}\label{P2.14}
A principal $v$-congruence $\mathcal{R}_H$ on a Menger algebra $(G,o)$
is partially $v$-cancellative with $v$-residue $W_H$ if and only if for any translation $t\in T_n(G)$ the following implication
\begin{equation}\label{e11}
 (t(g_1), t(g_2))\in\mathcal{R}_H\wedge t(g_1)\not\in W_H\longrightarrow (g_1,g_2)\in\mathcal{R}_H
\end{equation}
is true.
\end{proposition}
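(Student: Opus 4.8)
The plan is to unwind the definition of partial $v$-cancellativity and then appeal to the translation criterion \eqref{e4}. By definition, saying that $\mathcal{R}_H$ is partially $v$-cancellative with $v$-residue $W_H$ means that $W_H$ is an $\mathcal{R}_H$-class and that the relation $\mathcal{R}^*_H=\mathcal{R}_H\cap(G\!\setminus\!W_H)\times(G\!\setminus\!W_H)$ is $v$-cancellative. The first clause is automatic: $W_H$ is an $\mathcal{R}_H$-class whenever $W_H\neq\varnothing$ (as observed before Proposition \ref{P2.1}), while the case $W_H=\varnothing$ is covered by the convention built into the definition. Hence the proposition reduces to the equivalence: $\mathcal{R}^*_H$ is $v$-cancellative $\Longleftrightarrow$ \eqref{e11} holds for every $t\in T_n(G)$. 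By \eqref{e4} applied to the relation $\mathcal{R}^*_H$, its $v$-cancellativity is equivalent to the implication $(t(x),t(y))\in\mathcal{R}^*_H\Rightarrow(x,y)\in\mathcal{R}^*_H$ holding for all $x,y\in G$ and all $t\in T_n(G)$, and it is this implication that I compare with \eqref{e11}.

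Two elementary observations about $W_H$ provide the bridge. First, since $t_1\circ t\in T_n(G)$ whenever $t,t_1\in T_n(G)$, condition \eqref{e6} gives immediately that $a\in W_H$ implies $t(a)\in W_H$ for every $t\in T_n(G)$; equivalently, $t(g)\notin W_H$ implies $g\notin W_H$. Secondly, because $W_H$ is an $\mathcal{R}_H$-class (or is empty), $(t(g_1),t(g_2))\in\mathcal{R}_H$ together with $t(g_1)\notin W_H$ forces $t(g_2)\notin W_H$. Combining the two: for $g_1,g_2\in G$ and $t\in T_n(G)$, one has $(t(g_1),t(g_2))\in\mathcal{R}^*_H$ precisely when $(t(g_1),t(g_2))\in\mathcal{R}_H$ and $t(g_1)\notin W_H$; and when these hold the first observation yields $g_1,g_2\notin W_H$, so that $(g_1,g_2)\in\mathcal{R}^*_H$ is then equivalent to $(g_1,g_2)\in\mathcal{R}_H$.

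With these observations both implications follow at once. Assume first that $\mathcal{R}^*_H$ is $v$-cancellative, so $(t(g_1),t(g_2))\in\mathcal{R}^*_H$ implies $(g_1,g_2)\in\mathcal{R}^*_H$. If $(t(g_1),t(g_2))\in\mathcal{R}_H$ and $t(g_1)\notin W_H$, then $(t(g_1),t(g_2))\in\mathcal{R}^*_H$, hence $(g_1,g_2)\in\mathcal{R}^*_H\subseteq\mathcal{R}_H$; this is \eqref{e11}. Conversely, assume \eqref{e11} holds for every $t\in T_n(G)$, and let $(t(x),t(y))\in\mathcal{R}^*_H$; then $(t(x),t(y))\in\mathcal{R}_H$ and $t(x)\notin W_H$, so \eqref{e11} gives $(x,y)\in\mathcal{R}_H$, and since also $x,y\notin W_H$, we get $(x,y)\in\mathcal{R}^*_H$; by \eqref{e4} the relation $\mathcal{R}^*_H$ is $v$-cancellative. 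The only point requiring care is reconciling the one-sided wording of \eqref{e11}, where only $t(g_1)\notin W_H$ is assumed and only $(g_1,g_2)\in\mathcal{R}_H$ is concluded, with the symmetric implication furnished by \eqref{e4}; this is precisely what the two observations settle, using that $W_H$ is simultaneously an $\mathcal{R}_H$-class and closed under all translations. One could also avoid \eqref{e4} and prove \eqref{e11} directly by induction on the construction of the polynomial $t$, the basic translations $x\mapsto u[\overline{w}\,|_ix]$ being handled by the definition of partial $v$-cancellativity and the inductive step $t=a[\overline{w}\,|_it']$ by the same two observations; this is a little longer but entirely self-contained.
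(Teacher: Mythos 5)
Your proof is correct, and it takes a recognizably different route from the paper's. The paper's argument stays with the explicit formula \eqref{e12}: the forward direction is the one-line specialization of \eqref{e11} to the depth-one translations $t(x)=u[\overline{w}\,|_ix]$, and the converse is a hands-on induction that peels the outermost layer off $t(x)=u[\overline{w}\,|_it_1(x)]$, using \eqref{e12} to pass from $t$ to $t_1$ and the fact that $W_H$ is an $l$-ideal (Proposition~\ref{P2.2}) to keep the hypothesis $t_1(g_1)\not\in W_H$ alive, "after a finite number of steps" reaching $(g_1,g_2)\in\mathcal{R}_H$. You instead work with $\mathcal{R}^*_H$ throughout, take the definition via its gloss ``$\mathcal{R}_H\cap(G\!\setminus\!W_H)\times(G\!\setminus\!W_H)$ is $v$-cancellative'', and outsource the induction to the stated criterion \eqref{e4}; your two observations (closure of $W_H$ under all translations, which follows from \eqref{e6} and $t_1\circ t\in T_n(G)$ and strengthens Proposition~\ref{P2.2}, and the class property of $W_H$ forcing $t(g_2)\not\in W_H$) are exactly what is needed to reconcile the one-sided condition \eqref{e11} with the symmetric membership in $\mathcal{R}^*_H$, and the same observations also reconcile the defining implication with its ``i.e.''~gloss, so no gap remains there. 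What your route buys is a short, induction-free proof of the proposition itself, at the price of leaning on the unproved-but-stated equivalence \eqref{e4}; the paper's route is self-contained modulo Proposition~\ref{P2.2} but leaves the induction informal. The alternative you sketch at the end---proving \eqref{e11} directly by induction on the construction of $t$---is essentially the paper's proof.
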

\begin{proof}
If $\mathcal{R}_H$ and $W_H$ satisfy (\ref{e11}), then for $t(x)=u[\overline{w}\,|_ix]$, where
$u\in G,$ $\overline{w}\in G^n,$ $i\in\{1,\ldots,n\}$, we obtain
\begin{equation}\label{e12}
(u[\overline{w}\,|_ig_1], u[\overline{w}\,|_ig_2])\in\mathcal{R}_H\wedge
  u[\overline{w}\,|_ig_1]\not\in W_H\longrightarrow (g_1,g_2)\in\mathcal{R}_H ,
\end{equation}
which shows that  $\mathcal{R}_H$ is partially $v$-cancellative with $v$-residue $W_H$.

Conversely, let (\ref{e12}) and the premise of (\ref{e11}) be satisfied for $t\in T_n(G)$ of the form
$t(x)=u[\overline{w}\,|_it_1 (x)]$, where $t_1\in T_n(G)$, $u\in G$, $\overline{w}\in G^n$, $i\in\{1,\ldots,n\}$. Then, $(u[\overline{w}\,|_it_1(g_1)], u[\overline{w}\,|_it_1(g_2)])\in\mathcal{R}_H$ and $u[\overline{w}\,|_it_1(g_1)]\not\in W_H$, hence by
(\ref{e12}), we get $(t_1(g_1),t_1(g_2))\in \mathcal{R}_H$. Since $W_H$ is an $l$-ideal, from
$u[\overline{w}\,|_it_1(g_1)]\not\in W_H$ we obtain $t_1(g_1)\not\in W_H$. So, $(t_1(g_1),t_1(g_2))\in\mathcal{R}_H$ and $t_1(g_1)\not\in W_H$. Continuing a similar argumentation
after a finite number of steps we obtain $(g_1,g_2)\in\mathcal{R}_H$.
Thus, the condition (\ref{e11}) is proved.
\end{proof}

\begin{theorem}\label{T2.15}
A nonempty subset $H$ of a Menger algebra $(G,o)$ is strong if and only if it has the following two properties:
\begin{itemize}
  \item [$(i)$] \ $\rho_{_H}\langle h_1\rangle\cap\rho_{_H}\langle
  h_2\rangle\neq\varnothing$ for all $h_1,h_2\in H,$
  \item [$(ii)$] \ $\mathcal{R}_H$ is partially $v$-cancellative with $v$-residue $W_H$.
\end{itemize}
\end{theorem}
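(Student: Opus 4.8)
The plan is to prove the two implications separately, each time converting property~$(ii)$ into a statement about translations by means of Proposition~\ref{P2.14}. \emph{Necessity.} Suppose $H$ is strong. Property~$(i)$ is immediate: for $h_1,h_2\in H$ the identity translation carries each of them into $H$, hence lies in $\rho_{_H}\langle h_1\rangle\cap\rho_{_H}\langle h_2\rangle$. For~$(ii)$, recall that $W_H$ is an $\mathcal{R}_H$-class whenever it is nonempty (and that for $W_H=\varnothing$ the assertion reduces to $v$-cancellativity of $\mathcal{R}_H$), so by Proposition~\ref{P2.14} it suffices to establish~\eqref{e11}. Given $(t(g_1),t(g_2))\in\mathcal{R}_H$ with $t(g_1)\notin W_H$, I would use~\eqref{e6} to choose $s\in T_n(G)$ with $s(t(g_1))\in H$; then~\eqref{e5} yields $s(t(g_2))\in H$, so $s\circ t\in\rho_{_H}\langle g_1\rangle\cap\rho_{_H}\langle g_2\rangle\neq\varnothing$, and strongness forces $\rho_{_H}\langle g_1\rangle=\rho_{_H}\langle g_2\rangle$, i.e.\ $(g_1,g_2)\in\mathcal{R}_H$, which is~\eqref{e11}.

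\emph{Sufficiency.} Assume $(i)$ and~$(ii)$. I would proceed in three steps. (1)~Deduce that $H$ is a normal $v$-complex — equivalently, by~\eqref{e5}, that any two elements of $H$ lie in one $\mathcal{R}_H$-class: given $h_1,h_2\in H$, use~$(i)$ to produce a translation carrying the relevant elements of $H$ back into $H$, and then cancel it off using~$(ii)$ (via Proposition~\ref{P2.14}) to obtain $(h_1,h_2)\in\mathcal{R}_H$; in particular, whenever $t(h_1)\in H$, \eqref{e5} gives $t(h_2)\in H$. (2)~Apply Proposition~\ref{P2.3}: a nonempty normal $v$-complex is a single $\mathcal{R}_H$-class distinct from $W_H$, so all elements of $H$ are $\mathcal{R}_H$-equivalent. (3)~Verify strongness: if $\rho_{_H}\langle a\rangle\cap\rho_{_H}\langle b\rangle\neq\varnothing$, pick $q$ in the intersection, so $q(a),q(b)\in H$; then $(q(a),q(b))\in\mathcal{R}_H$ by~(2), and $q(a)\notin W_H$ since $q(a)\in H$ and $H\cap W_H=\varnothing$; hence Proposition~\ref{P2.14} cancels $q$ to give $(a,b)\in\mathcal{R}_H$, that is $\rho_{_H}\langle a\rangle=\rho_{_H}\langle b\rangle$, and $H$ is strong.

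The main obstacle is step~(1): making precise how $(i)$ (producing common translations into $H$) and~$(ii)$ (cancelling them) combine to force $H$ into a single $\mathcal{R}_H$-class. The necessity direction and steps~(2)--(3) are routine once the definitions together with \eqref{e5}, \eqref{e6}, Proposition~\ref{P2.3} and Proposition~\ref{P2.14} are in hand.
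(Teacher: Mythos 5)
Your necessity argument is correct: $(i)$ follows at once from the identity translation, and $(ii)$ follows by verifying \eqref{e11} and appealing to Proposition~\ref{P2.14}; this is a mild variant of the paper's route, which instead checks the defining implication \eqref{e12} directly from condition $(b)$ of Theorem~\ref{T2.8}.

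The sufficiency half, however, has a genuine gap, located exactly where you place ``the main obstacle''. To cancel the translation $q$ produced by $(i)$ you must feed Proposition~\ref{P2.14} (i.e.\ \eqref{e11}) the premise $(q(h_1),q(h_2))\in\mathcal{R}_H$; but $(i)$ only gives $q(h_1),q(h_2)\in H$, and two elements of $H$ need not be $\mathcal{R}_H$-equivalent --- that is precisely what step (1) is supposed to establish, so the proposed cancellation is circular. Moreover the gap is not merely a matter of phrasing: as your own proof of $(i)$ shows, once the identity translation is admitted into $T_n(G)$ (the paper admits it explicitly and uses it, e.g., to prove $H\cap W_H=\varnothing$), condition $(i)$ holds for \emph{every} nonempty $H$, so your steps (1)--(3) would derive strongness from $(ii)$ alone, and that implication fails. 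Take $n=1$ and the semigroup $S=\{h_1,h_2,s,a,b,z\}$ with $sh_1=a$, $sh_2=b$ and all other products equal to $z$, and put $H=\{h_1,h_2,a\}$. Then $\rho_{_H}\langle h_1\rangle$ consists of the polynomials $x$ and $s[x]$, while $\rho_{_H}\langle h_2\rangle=\rho_{_H}\langle a\rangle=\{x\}$ and $W_H=\{b,s,z\}$; the $\mathcal{R}_H$-classes are $\{h_1\}$, $\{h_2,a\}$ and $W_H$ (so $W_H$ is an $\mathcal{R}_H$-class, as the definition requires), and \eqref{e11} holds because the only value of a non-identity translation lying outside $W_H$ is $a=s[h_1]$. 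Thus $(i)$ and $(ii)$ hold, yet $H$ is not strong, since $x\in\rho_{_H}\langle h_1\rangle\cap\rho_{_H}\langle h_2\rangle$ while $s[x]$ separates these two sets. Hence step (1) cannot be completed as described; any viable argument must use $(i)$ with witnesses other than the identity translation (i.e.\ in a non-vacuous form), not merely ``produce a translation and cancel it''. Finally, do not expect to extract the missing idea from the paper itself: there the key claim $(t(a),t(b))\in\mathcal{R}_H$ is justified by citing Proposition~\ref{P2.10}, whose hypothesis is that $H$ is strong --- the very statement being proved --- so the paper sidesteps rather than resolves the difficulty you correctly identified.
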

\begin{proof}
If a nonempty subset $H$ of $(G,o)$ is strong, then, by
Proposition \ref{P2.9}, it is an $\mathcal{R}_H$-class different from $W_H$. Since $H\neq W_H$,
by Proposition \ref{P2.10}, there exists a translation $t\in T_n(G)$ such that $H=\rho^{\circ}_{_H}\langle  t\rangle$. So, $t(h_1)\in H$ and $t(h_2)\in H$ for any $h_1,h_2\in H.$ Thus,
$t\in\rho_{_H}\langle h_1\rangle\cap\rho_{_H}\langle h_2\rangle$, which implies
$\rho_{_H}\langle h_1\rangle\cap\rho_{_H}\langle h_2\rangle\neq\varnothing$. This proves the first property.

Now suppose that $(u[\overline{w}\,|_ig_1],u[\overline{w}\,|_ig_2])\in\mathcal{R}_H$ and
$u[\overline{w}\,|_ig_1]\not\in W_H$ for some $u,g_1,g_2\in G,$ $\overline{w}\in G^n,$ $i=1,\ldots,n$. Since $u[\overline{w}\,|_ig_1]\not\in W_H$ means that $t_1(u[\overline{w}\,|_ig_1])\in H$ for some
$t_1\in T_n(G)$, we have $t_1(u[\overline{w}\,|_ig_2])\in H$. If $t(g_1)\in H$ for some $t\in T_n(G)$, then
from $t_1(u[\overline{w}\,|_ig_2]),t_1(u[\overline{w}\,|_ig_1]),t(g_1)\in H$ we obtain $t(g_2)\in H$ because $H$ is a strong subset. So, $t(g_1)\in H$ implies $t(g_2)\in H$. Analogously $t(g_2)\in H$ implies $t(g_1)\in H.$ Thus $(g_1,g_2)\in\mathcal{R}_H,$ which proves the second property.

Conversely, let $H$ be an arbitrary nonempty subset of $(G,o)$ satisfying $(i)$ and $(ii)$.
If $\rho_{_H}\langle a\rangle\cap\rho_{_H}\langle b\rangle\neq\varnothing$
for some $a,b\in G,$
 then for some $t\in T_n(G)$ we have $t\in\rho_{_H}\langle a\rangle$ and
  $t\in\rho_{_H}\langle b\rangle$. Thus $t(a)\in H$ and $t(b)\in H.$ But
  $H\cap W_H=\varnothing$, so $t(a)\not\in W_H$ and $t(b)\not\in W_H$. This, by $(i)$, gives
  $\rho_{_H}\langle t(a)\rangle\cap\rho_{_H}\langle t(b)\rangle\neq\varnothing$. Hence,
  there is a translation $t_1\in\rho_{_H}\langle t(a)\rangle\cap\rho_{_H}\langle t(b)\rangle$.
   For this translation  $t_1(t(a))\in H$ and $t_1(t(b))\in H.$  Consequently,
   $t(a)\in\rho^{\circ}_{_H}\langle t_1\rangle$ and
$t(b)\in\rho^{\circ}_{_H}\langle t_1\rangle$, whence
$(t(a),t(b))\in\mathcal{R}_H$  because, by Proposition
\ref{P2.10}, $\rho^{\circ}_{_H}\langle t_1\rangle$ is an
$\mathcal{R}_H$-class different from $W_H$. This, by $(ii)$,
implies $(a,b)\in\mathcal{R}_H$. Therefore, $\rho_{_H}\langle
a\rangle =\rho_{_H}\langle b\rangle$, which shows that a subset
$H$ is strong.
\end{proof}

We say that a subset $X$, possibly empty, of a Menger algebra $(G,o)$ is \textit{$l$-consistent} in
$(G,o)$, if for any $g\in G$ and $t\in T_n(G)$ \ $t(g)\in X$ implies $g\in X.$

\begin{proposition}\label{P2.16}
Let $H$ be a strong subset of a Menger algebra $(G,o)$. Then $\mathcal{R}_H$ is a $v$-cancellative $v$-congruence if and only if $W_H$ is $l$-consistent in $(G,o)$.
\end{proposition}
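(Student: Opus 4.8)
The plan is to prove both implications of the equivalence, using the characterization of $\mathcal{R}_H$-classes for a strong $H$ given in Proposition \ref{P2.10} together with the formulation of $v$-cancellativity in terms of translations (equation \eqref{e4}) and the translation-form of partial $v$-cancellativity (Proposition \ref{P2.14}). The key observation is that $\mathcal{R}_H$ being a \emph{fully} $v$-cancellative $v$-congruence is exactly $\mathcal{R}_H$ being partially $v$-cancellative with $v$-residue $W_H$ \emph{plus} $W_H=\varnothing$ collapsing into the cancellation on all of $G$; but since $W_H$ need not be empty, the correct statement is that the cancellation implication \eqref{e11} must hold even when $t(g_1)\in W_H$. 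So the heart of the matter is: for a strong $H$, the implication $(t(g_1),t(g_2))\in\mathcal{R}_H\Rightarrow(g_1,g_2)\in\mathcal{R}_H$ holds for \emph{all} pairs in $W_H$ if and only if $W_H$ is $l$-consistent.

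First I would handle the easy direction: assume $\mathcal{R}_H$ is $v$-cancellative. Take $g\in G$, $t\in T_n(G)$ with $t(g)\in W_H$. We want $g\in W_H$. Since $W_H$ is an $\mathcal{R}_H$-class (it is one of the members of $\mathcal{K}$ in Proposition \ref{P2.10}), from $t(g)\in W_H$ and the fact that $W_H$ is an $l$-ideal (Proposition \ref{P2.2}) one gets $t'(t(g))\in W_H$ for a suitable chain, but more directly: pick any $w\in W_H$; then $t(g)$ and $w$ lie in the same $\mathcal{R}_H$-class, and I want to feed $t(w)$ back in. Actually the clean argument: $W_H$ is an $l$-ideal, so translations map $W_H$ into $W_H$, hence $t(w)\in W_H$ for any $w\in W_H$, giving $(t(g),t(w))\in\mathcal{R}_H$; by $v$-cancellativity (in the form \eqref{e4}) $(g,w)\in\mathcal{R}_H$, so $g\in W_H$. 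This shows $W_H$ is $l$-consistent.

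For the converse, assume $W_H$ is $l$-consistent. By Theorem \ref{T2.15}, since $H$ is strong, $\mathcal{R}_H$ is already partially $v$-cancellative with $v$-residue $W_H$, i.e.\ \eqref{e11} holds whenever $t(g_1)\notin W_H$. It remains to upgrade this to the case $t(g_1)\in W_H$. So suppose $(t(g_1),t(g_2))\in\mathcal{R}_H$ with $t(g_1)\in W_H$. Then $t(g_2)\in W_H$ as well (same $\mathcal{R}_H$-class). By $l$-consistency applied to $t(g_1)\in W_H$ we get $g_1\in W_H$, and similarly $g_2\in W_H$; since $W_H$ is a single $\mathcal{R}_H$-class, $(g_1,g_2)\in\mathcal{R}_H$. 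Combining the two cases, \eqref{e11} holds for every translation with no restriction, which by Proposition \ref{P2.14} (with $W_H=\varnothing$ effectively, i.e.\ the unrestricted cancellation) means $\mathcal{R}_H$ is $v$-cancellative.

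The main obstacle I anticipate is purely bookkeeping: making sure the translation-level statement \eqref{e4} is correctly identified with the vector-level $v$-cancellativity, and being careful that "partially $v$-cancellative with $v$-residue $W_H$" is genuinely the restriction of full $v$-cancellativity to $G\setminus W_H$, so that the two pieces — cancellation outside $W_H$ (free from Theorem \ref{T2.15}) and cancellation inside $W_H$ (equivalent to $l$-consistency) — together give full $v$-cancellativity. No deep new idea is needed; the content is entirely in recognizing that $l$-consistency of $W_H$ is precisely the "missing" instance of the cancellation law not already guaranteed by strongness.
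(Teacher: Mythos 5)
Your proof is correct and follows essentially the same route as the paper: the forward direction combines the partial $v$-cancellativity coming from strongness (Theorem \ref{T2.15} via Proposition \ref{P2.14}) with $l$-consistency handling the case $t(g_1)\in W_H$, and the converse uses closure of $W_H$ under translations together with cancellation in the form \eqref{e4}. The only cosmetic difference is that in the converse you cancel against an arbitrary $w\in W_H$ using the $l$-ideal property of $W_H$ (Proposition \ref{P2.2}), whereas the paper takes $w=t(a)$ and checks $t(t(a))\in W_H$ directly; both are the same idea.
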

\begin{proof}
Let $H$ be a strong subset of $(G,o)$. Then, according to the second condition of Theorem \ref{T2.15}, the relation $\mathcal{R}_H$ is partially $v$-cancellative with $v$-residue $W_H$, i.e., the implication (\ref{e11}) is satisfied. Suppose that $t(a),t(b)\in W_H$. If $W_H$ is $l$-consistent in $(G,o)$, then $a,b\in W_H$,
so $(a,b)\in\mathcal{R}_H$. Thus, $\mathcal{R}_H$ satisfies $(\ref{e4})$, so it is $v$-cancellative.

Conversely, let $\mathcal{R}_H$ be a $v$-cancellative $v$-congruence. If $t(a)\in W_H$ for some $t\in T_n(G)$, then $t_1(t(a))\not\in H$ for all $t_1\in T_n(G)$. So, for all $t_1\in T_n(G)$ we have $(t_1\circ t)(t(a))\not\in H.$ Thus $t_1(t(t(a)))\not\in H,$ which implies $t(t(a))\in W_H$. Hence, $(t(a),t(t(a)))\in\mathcal{R}_H$, consequently $(a,t(a))\in\mathcal{R}_H$. Therefore $a\in W_H$. This shows that $W_H$ is $l$-consistent in $(G,o)$.
\end{proof}

\section{Principal $l$-congruences on Menger algebras}

For a nonempty subset $H$ of a Menger algebra $(G,o)$ of rank $n$ and $B=G^n\cup\{\overline{e}\}$ we define the following subsets:
\[
\eta_{_H}=\{(g,\overline{x})\in G\times B\,|\,g [\overline{x}]\in H\},
\]
\[
\eta_{_H}\langle g\rangle=\{\overline{x}\in B\,|\,g [\overline{x}]\in H \},
\]
\[
\eta^{\circ}_{_H}\langle\overline{x}\rangle=\{g\in G\,|\,g [\overline{x}]\in H\},
\]
\[
_H\!W=\{g\in G\,|\,\eta_{_H}\langle g\rangle=\varnothing\},
\]
\[
\mathcal{L}_H=\{(g_1, g_2)\in G\times G\,|\,\eta_{_H}\langle g_1\rangle=\eta_{_H}\langle g_2\rangle\}.
\]

It is easy to see that $\mathcal{L}_H$ is an equivalence relation on $(G,o)$ and $_H \!W$ is its an
equivalence class, provided that it is not empty. It is also easy show that
\begin{eqnarray}
&(g_1,g_2)\in\mathcal{L}_H\longleftrightarrow(\forall\overline{x}\in B)\;
\big(g_1[\overline{x}]\in H\longleftrightarrow g_2[\overline{x}]\in H\big),& \nonumber\\[4pt]
&g\in_H\!\!W\longleftrightarrow (\forall\overline{x}\in B)\;g[\overline{x}]\not\in H.& \nonumber
\end{eqnarray}
$_H\!W$ is called an \textit{$l$-residue} of $H$. Obviously $_H\!W\cap H=\varnothing$.

\begin{proposition}\label{P3.1}
For any subset $H$ of a Menger algebra $(G,o)$ the relation $\mathcal{L}_H$ is an $l$-congruence. Moreover, $_H\!W$ is an $s$-ideal of $(G,o)$, provided that $_H\!W\neq\varnothing$.
\end{proposition}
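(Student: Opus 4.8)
The plan is to prove the two claims of Proposition~\ref{P3.1} by mimicking the pattern of Proposition~\ref{P2.1} and Proposition~\ref{P2.2}, but working with right multiplications $g\mapsto g[\overline{x}]$ (i.e.\ with the semigroup $(B,*)$ acting on $G$) instead of with translations $t\in T_n(G)$. First I would record the characterizing equivalences already stated in the excerpt:
\[
(g_1,g_2)\in\mathcal{L}_H\;\Longleftrightarrow\;(\forall\overline{x}\in B)\;\big(g_1[\overline{x}]\in H\;\Longleftrightarrow\;g_2[\overline{x}]\in H\big),
\]
\[
g\in{}_H\!W\;\Longleftrightarrow\;(\forall\overline{x}\in B)\;g[\overline{x}]\not\in H .
\]

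To see that $\mathcal{L}_H$ is $l$-regular, take $(g_1,g_2)\in\mathcal{L}_H$ and arbitrary $z_1,\ldots,z_n\in G$; I must show $(g_1[z_1\ldots z_n],g_2[z_1\ldots z_n])\in\mathcal{L}_H$. Fix any $\overline{x}\in B$. The key computation is that by superassociativity (and the conventions for $\overline{e}$), the element $(g_1[z_1\ldots z_n])[\overline{x}]$ equals $g_1[\overline{z}*\overline{x}]$, where $\overline{z}=(z_1,\ldots,z_n)$ and $*$ is the semigroup operation on $B$; likewise for $g_2$. Since $\overline{z}*\overline{x}\in B$, the hypothesis $(g_1,g_2)\in\mathcal{L}_H$ applied to the element $\overline{z}*\overline{x}$ gives $(g_1[z_1\ldots z_n])[\overline{x}]\in H\Leftrightarrow(g_2[z_1\ldots z_n])[\overline{x}]\in H$. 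As $\overline{x}$ was arbitrary, the displayed equivalence yields $(g_1[z_1\ldots z_n],g_2[z_1\ldots z_n])\in\mathcal{L}_H$. Together with the already-noted fact that $\mathcal{L}_H$ is an equivalence, this shows $\mathcal{L}_H$ is an $l$-congruence.

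For the second claim, assume ${}_H\!W\neq\varnothing$ and take $h\in{}_H\!W$ and arbitrary $x_1,\ldots,x_n\in G$; I must show $h[x_1\ldots x_n]\in{}_H\!W$, i.e.\ that $(h[x_1\ldots x_n])[\overline{y}]\not\in H$ for every $\overline{y}\in B$. Again by superassociativity, $(h[x_1\ldots x_n])[\overline{y}]=h[\overline{x}*\overline{y}]$ with $\overline{x}=(x_1,\ldots,x_n)\in G^n\subseteq B$, and $\overline{x}*\overline{y}\in B$; since $h\in{}_H\!W$, the characterization of ${}_H\!W$ gives $h[\overline{x}*\overline{y}]\not\in H$. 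Hence $h[x_1\ldots x_n]\in{}_H\!W$, so ${}_H\!W$ is an $s$-ideal.

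The only subtle point — and thus the main obstacle — is verifying the two identities $(g[z_1\ldots z_n])[\overline{x}]=g[\overline{z}*\overline{x}]$ and its analog, uniformly over all $\overline{x}\in B$, including the boundary case $\overline{x}=\overline{e}$; this requires care because $B$ mixes genuine $n$-tuples from $G^n$ with the adjoined identity $\overline{e}$, and one must invoke both the superassociative law \eqref{e1} and the stipulated conventions $g[e_1\ldots e_n]=g$, $e_i[g_1\ldots g_n]=g_i$. Once these composition identities are in hand, everything else is a direct application of the characterizations of $\mathcal{L}_H$ and ${}_H\!W$, exactly parallel to the proofs in Section~2.
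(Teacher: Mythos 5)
Your proposal is correct and follows essentially the same route as the paper: both proofs rest on the identity $(g[\overline{y}])[\overline{x}]=g[\overline{y}*\overline{x}]$ (superassociativity together with the conventions for $\overline{e}$) and then apply the stated characterizations of $\mathcal{L}_H$ and ${}_H\!W$ to the element $\overline{y}*\overline{x}\in B$. The ``subtle point'' you flag is handled in the paper exactly as you suggest, so nothing further is missing.
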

\begin{proof}
Let $(g_1,g_2)\in\mathcal{L}_H$, then for all $\overline{x},\overline{y}\in B$ we have
\[
g_1[\overline{y}*\overline{x}]\in H\longleftrightarrow g_2[\overline{y}*\overline{x}]\in H ,
\]
whence we obtain
\[
(g_1[\overline{y}])[\overline{x}]\in H
\longleftrightarrow (g_2[\overline{y}])[\overline{x}]\in H
\]
for all $\overline{x},\overline{y}\in B$. This means that $(g_1[\overline{y}],g_2[\overline{y}])\in\mathcal{L}_H$ for any $\overline{y}\in B$. Thus, $\mathcal{L}_H$ is
an $l$-regular, consequently, it is an $l$-congruence.

Let $g\in_H \!\!\!W$, then $g[\overline{y}*\overline{x}]\not\in H$ for all $\overline{y}*\overline{x}$, where $\overline{x},\overline{y}\in B$. Therefore, $(\forall\overline{x}\in B) (g[\overline{y}])[\overline{x}]\not\in
H$ for every $\overline{y}\in B.$ Thus, $g[\overline{y}]\in_H\!\!\!W$ for every $\overline{y}\in B$. So, $_H\!W$ is an $s$-ideal of $(G,o)$.
\end{proof}

Further the relation $\mathcal{L}_H$ will be called a \textit{principal $l$-congruence} induced by $H.$

\begin{proposition}\label{P3.2}
Any $l$-congruence $\varepsilon$ on a Menger algebra $(G,o)$ has the form
\[
\varepsilon = \bigcap \{\mathcal{L}_H\,|\,H\in G /\varepsilon\},
\]
where $G/\varepsilon$ is the set of all $\varepsilon$-classes.
\end{proposition}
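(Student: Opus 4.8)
The plan is to mimic the proof of Proposition~\ref{P2.4}, replacing translations $t\in T_n(G)$ by right multiplications $\overline{x}\in B$, and using the semigroup structure of $(B,*)$ together with the $l$-regularity of $\varepsilon$. So the two inclusions are handled separately.

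\textbf{First inclusion} $\varepsilon\subseteq\bigcap\{\mathcal{L}_H\mid H\in G/\varepsilon\}$. Take $(g_1,g_2)\in\varepsilon$ and an arbitrary $\varepsilon$-class $H$. I want $(g_1,g_2)\in\mathcal{L}_H$, i.e. $g_1[\overline{x}]\in H\longleftrightarrow g_2[\overline{x}]\in H$ for every $\overline{x}\in B$. Since $\varepsilon$ is $l$-regular, from $(g_1,g_2)\in\varepsilon$ we get $(g_1[z_1\ldots z_n],g_2[z_1\ldots z_n])\in\varepsilon$ for every $(z_1,\ldots,z_n)\in G^n$; and for $\overline{x}=\overline{e}$ we have $g_1[\overline{e}]=g_1$, $g_2[\overline{e}]=g_2$, so $(g_1[\overline{e}],g_2[\overline{e}])\in\varepsilon$ as well. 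Hence for every $\overline{x}\in B$ the elements $g_1[\overline{x}]$ and $g_2[\overline{x}]$ are $\varepsilon$-related, so they lie in the same $\varepsilon$-class; in particular $g_1[\overline{x}]\in H\longleftrightarrow g_2[\overline{x}]\in H$. This gives $(g_1,g_2)\in\mathcal{L}_H$.

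\textbf{Second inclusion} $\bigcap\{\mathcal{L}_H\mid H\in G/\varepsilon\}\subseteq\varepsilon$. Take $(g_1,g_2)$ in the intersection. Then for every $\varepsilon$-class $H$ and every $\overline{x}\in B$ we have $g_1[\overline{x}]\in H\longleftrightarrow g_2[\overline{x}]\in H$. Apply this with $\overline{x}=\overline{e}$ and with $H=H_1$ the $\varepsilon$-class containing $g_1$: since $g_1[\overline{e}]=g_1\in H_1$, we get $g_2[\overline{e}]=g_2\in H_1$, hence $(g_1,g_2)\in\varepsilon$.

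The only subtlety — and it is a genuine one rather than a routine calculation — is the role of the adjoined element $\overline{e}$ in $B$: without being able to plug in $\overline{x}=\overline{e}$ (which realizes the identity map via the convention $g[e_1\ldots e_n]=g$) one could not recover $(g_1,g_2)\in\varepsilon$ from the $\mathcal{L}_H$-relations, exactly as the identity translation is needed in Proposition~\ref{P2.4}. I expect no real obstacle beyond making sure this step is stated explicitly. I would write:

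\begin{proof}
Let $(g_1,g_2)\in\varepsilon$ and let $H$ be an arbitrary $\varepsilon$-class. If $g_1[\overline{x}]\in H$ for some $\overline{x}\in B$, then, since $\varepsilon$ is $l$-regular and $g_1[\overline{e}]=g_1$, $g_2[\overline{e}]=g_2$, we have $(g_1[\overline{x}],g_2[\overline{x}])\in\varepsilon$ for every $\overline{x}\in B$; hence $g_2[\overline{x}]$ lies in the same $\varepsilon$-class as $g_1[\overline{x}]$, so $g_2[\overline{x}]\in H$. Symmetrically $g_2[\overline{x}]\in H$ gives $g_1[\overline{x}]\in H$. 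Thus $\eta_{_H}\langle g_1\rangle=\eta_{_H}\langle g_2\rangle$, i.e. $(g_1,g_2)\in\mathcal{L}_H$. Consequently $\varepsilon\subseteq\bigcap\{\mathcal{L}_H\mid H\in G/\varepsilon\}$.

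Conversely, let $(g_1,g_2)\in\bigcap\{\mathcal{L}_H\mid H\in G/\varepsilon\}$. Then for every $\varepsilon$-class $H$ and every $\overline{x}\in B$ we have $g_1[\overline{x}]\in H\longleftrightarrow g_2[\overline{x}]\in H$. Taking $\overline{x}=\overline{e}$ and $H=H_1$ the $\varepsilon$-class containing $g_1$, from $g_1[\overline{e}]=g_1\in H_1$ we get $g_2=g_2[\overline{e}]\in H_1$, so $(g_1,g_2)\in\varepsilon$. Hence $\bigcap\{\mathcal{L}_H\mid H\in G/\varepsilon\}\subseteq\varepsilon$, which completes the proof.
\end{proof}
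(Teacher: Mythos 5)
Your proof is correct and follows essentially the same route as the paper: use $l$-regularity to show $g_1[\overline{x}]$ and $g_2[\overline{x}]$ lie in the same $\varepsilon$-class for the first inclusion, and specialize to $\overline{x}=\overline{e}$ and the class of $g_1$ for the reverse. Your explicit handling of the case $\overline{x}=\overline{e}$ (which the definition of $l$-regularity does not literally cover, since it quantifies over $G^n$ only) is a minor point the paper glosses over, but otherwise the arguments coincide.
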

\begin{proof}
Let $(g_1,g_2)\in\varepsilon$ and $g_1[\overline{x}]\in H$ for
some $\overline{x}\in B$ and $H\in G/\varepsilon$. Since $\varepsilon$ is an $l$-regular relation, for any $\overline{x}\in B$ we have $(g_1[\overline{x}], g_2[\overline{x}])\in\varepsilon$. But $H$ is an $\varepsilon$-class and $g_1[\overline{x}]\in H$, then, obviously, $g_2[\overline{x}]\in H$. Analogously,
$g_2[\overline{x}]\in H$ implies $g_1[\overline{x}]\in H$. Thus,
$$
(\forall\overline{x}\in B) \big(g_1[\overline{x}]\in H\longleftrightarrow g_2[\overline{x}]\in H\big),
$$
i.e., $(g_1,g_2)\in\mathcal{L}_H$. Since $H$ is an arbitrary $\varepsilon$-class, from the abowe we obtain inclusion $\varepsilon\subseteq\bigcap\{\mathcal{L}_H\,|\,H\in G/\varepsilon\}$.

To prove the converse inclusion consider an arbitrary pair $(g_1,g_2)\in\bigcap\{\mathcal{L}_H\,|\,H\in G/\varepsilon\}$. Then, $g_1[\overline{x}]\in H$ if and only if $g_2[\overline{x}]\in H$ for any $\overline{x}\in B$ and $H\in G/\varepsilon$. This, for $\overline{x}=\overline{e}$ and $H=\langle g_1\rangle$, where $\langle g_1\rangle$ denotes the $\varepsilon$-class determined by $g_1$, gives $g_2\in\langle g_1\rangle$ Thus, $(g_1,g_2)\in\varepsilon$ and consequently
$\bigcap\{\mathcal{L}_H\,|\,H\in G/\varepsilon\}\subseteq\varepsilon$, which completes the proof.
\end{proof}

\begin{definition}\rm
A subset $H$ of a Menger algebra $(G,o)$ is called \textit{$l$-strong} if
\begin{equation}\label{e13}
\eta_{_H}\langle a\rangle\cap\eta_{_H}\langle b\rangle\neq\varnothing\longrightarrow\eta_{_H}\langle
a\rangle=\eta_{_H}\langle b\rangle
\end{equation}
for all $a,b\in G.$
\end{definition}

\begin{theorem}\label{T3.4}
For a subset $H$ of a Menger algebra $(G,o)$ the following conditions are equivalent.
\begin{itemize}
  \item [$(i)$] \ $H$ is $l$-strong.
  \item [$(ii)$] \ For all $g_1,g_2\in G $ and $\overline{x},\overline{y}\in B$
\begin{equation}\label{e14}
  g_1[\overline{x}]\in H\wedge g_2[\overline{x}]\in H\wedge g_2[\overline{y}]\in H
  \longrightarrow g_1[\overline{y}]\in H.
\end{equation}
  \item [$(iii)$] \ For all $\overline{x},\overline{y}\in B$
 \begin{equation}\label{e15}
\eta^{\circ}_{_H}\langle\overline{x}\rangle\cap\eta^{\circ}_{_H}\langle\overline{y}\rangle\neq\varnothing
\longrightarrow\eta^{\circ}_{_H}\langle\overline{x}\rangle=\eta^{\circ}_{_H}\langle\overline{y}\rangle .
\end{equation}
\end{itemize}
\end{theorem}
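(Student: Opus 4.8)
The plan is to establish the cyclic chain $(i)\Longrightarrow(ii)\Longrightarrow(iii)\Longrightarrow(i)$, following the template of the proof of Theorem~\ref{T2.8} almost verbatim. The dictionary is: replace the set of elementary translations $T_n(G)$ acting by $x\mapsto t(x)$ with the semigroup $B=G^n\cup\{\overline{e}\}$ acting by $g\mapsto g[\overline{x}]$, replace $\rho_{_H}\langle a\rangle$ by $\eta_{_H}\langle a\rangle$, and replace $\rho^{\circ}_{_H}\langle t\rangle$ by $\eta^{\circ}_{_H}\langle\overline{x}\rangle$. What legitimizes the substitution is that $B$ is closed under $*$ with $(g[\overline{y}])[\overline{x}]=g[\overline{y}*\overline{x}]$, so that $\overline{y}*\overline{x}$ takes over the role of the composite translation $t_1\circ t_2$; but, just as in Theorem~\ref{T2.8}, each of the three implications below is purely definitional and will not actually invoke this closure.

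For $(i)\Longrightarrow(ii)$ I would argue: if $g_1[\overline{x}],g_2[\overline{x}],g_2[\overline{y}]\in H$, then $\overline{x}\in\eta_{_H}\langle g_1\rangle\cap\eta_{_H}\langle g_2\rangle$, so $l$-strongness forces $\eta_{_H}\langle g_1\rangle=\eta_{_H}\langle g_2\rangle$; since $g_2[\overline{y}]\in H$ means $\overline{y}\in\eta_{_H}\langle g_2\rangle$, we obtain $\overline{y}\in\eta_{_H}\langle g_1\rangle$, i.e. $g_1[\overline{y}]\in H$. For $(ii)\Longrightarrow(iii)$: from $g\in\eta^{\circ}_{_H}\langle\overline{x}\rangle\cap\eta^{\circ}_{_H}\langle\overline{y}\rangle$ one has $g[\overline{x}],g[\overline{y}]\in H$; given any $g_1\in\eta^{\circ}_{_H}\langle\overline{x}\rangle$, i.e. $g_1[\overline{x}]\in H$, applying \eqref{e14} with this $g_1$ and $g_2:=g$ yields $g_1[\overline{y}]\in H$, so $g_1\in\eta^{\circ}_{_H}\langle\overline{y}\rangle$; this is one inclusion, and the other follows by swapping $\overline{x}$ and $\overline{y}$. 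For $(iii)\Longrightarrow(i)$: from $\overline{x}\in\eta_{_H}\langle a\rangle\cap\eta_{_H}\langle b\rangle$ one gets $a,b\in\eta^{\circ}_{_H}\langle\overline{x}\rangle$; for an arbitrary $\overline{y}\in\eta_{_H}\langle a\rangle$ we have $a\in\eta^{\circ}_{_H}\langle\overline{y}\rangle$, so $\eta^{\circ}_{_H}\langle\overline{x}\rangle\cap\eta^{\circ}_{_H}\langle\overline{y}\rangle\neq\varnothing$ and \eqref{e15} gives $\eta^{\circ}_{_H}\langle\overline{x}\rangle=\eta^{\circ}_{_H}\langle\overline{y}\rangle$; since $b\in\eta^{\circ}_{_H}\langle\overline{x}\rangle$ we conclude $b\in\eta^{\circ}_{_H}\langle\overline{y}\rangle$, i.e. $\overline{y}\in\eta_{_H}\langle b\rangle$, which gives $\eta_{_H}\langle a\rangle\subseteq\eta_{_H}\langle b\rangle$; equality again follows by symmetry.

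I do not anticipate any real obstacle here; the only care needed is bookkeeping --- keeping straight which of $a,b$ (resp. $\overline{x},\overline{y}$, resp. $g_1,g_2$) occupies which slot in each implication, and not forgetting to repeat the symmetric half in $(ii)\Longrightarrow(iii)$ and $(iii)\Longrightarrow(i)$. One may also note, in parallel with the Remark following Theorem~\ref{T2.8}, that if $H=\varnothing$ then $\eta_{_H}\langle a\rangle=\varnothing$ for every $a$ and all three conditions hold vacuously, so the equivalence is trivially true in that degenerate case.
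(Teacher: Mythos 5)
Your proposal is correct and follows essentially the same route as the paper's own proof: the same cyclic chain $(i)\Rightarrow(ii)\Rightarrow(iii)\Rightarrow(i)$, with the same witness choices ($\overline{x}\in\eta_{_H}\langle g_1\rangle\cap\eta_{_H}\langle g_2\rangle$, the element $g$ in the common set $\eta^{\circ}_{_H}\langle\overline{x}\rangle\cap\eta^{\circ}_{_H}\langle\overline{y}\rangle$, and the arbitrary $\overline{y}\in\eta_{_H}\langle a\rangle$), and indeed, as you note, none of the steps needs the closure of $B$ under $*$.
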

\begin{proof}
$(i)\longrightarrow (ii)$ \ Let $H$ be a $l$-strong subset of
$(G,o)$ and $g_1[\overline{x}], g_2[\overline{x}],
g_2[\overline{y}]\in H$ for some $g_1,g_2\in G,$
$\overline{x},\overline{y}\in B$. Then $g_1[\overline{x}],
g_2[\overline{x}]\in H$ imply $\overline{x}\in\eta_{_H}\langle
g_1\rangle\cap\eta_{_H}\langle g_2\rangle$, so $\eta_{_H}\langle
g_1\rangle\cap\eta_{_H}\langle g_2\rangle\neq\varnothing$. This,
according to (\ref{e13}), gives $\eta_{_H}\langle
g_1\rangle=\eta_{_H}\langle g_2\rangle$. From
$g_2[\overline{y}]\in H$ we obtain
$\overline{y}\in\eta_{_H}\langle g_2\rangle$, which means that
$\overline{y}\in\eta_{_H}\langle g_1\rangle$. Thus,
$g_1[\overline{y}]\in H$. This proves $(ii)$.

$(ii)\longrightarrow (iii)$ \ If $\eta^{\circ}_{_H}\langle\overline{x}\rangle\cap \eta^{\circ}_{_H}\langle\overline{y}\rangle\neq\varnothing$, then for any $g\in\eta^{\circ}_{_H}\langle\overline{x}\rangle\cap\eta^{\circ}_{_H}\langle\overline{y}\rangle$ we have $g[\overline{x}]\in H$ and $g[\overline{y}]\in H$. If $g_1\in\eta^{\circ}_{_H}\langle\overline{x}\rangle$, then
$g_1[\overline{x}]\in H$. Thus, $g_1[\overline{x}], g[\overline{x}], g[\overline{y}]\in H,$ whence, by (\ref{e14}), we obtain $g_1[\overline{y}]\in H.$ Thus $g_1\in\eta^{\circ}_{_H}\langle\overline{y}\rangle$. So,
$\eta^{\circ}_{_H}\langle\overline{x}\rangle\subseteq\eta^{\circ}_{_H}\langle\overline{y}\rangle$.
Analogously, $\eta^{\circ}_{_H}\langle\overline{y}\rangle\subseteq\eta^{\circ}_{_H}\langle\overline{x}\rangle$.
Hence, $\eta^{\circ}_{_H}\langle\overline{x}\rangle=\eta^{\circ}_{_H}\langle\overline{y}\rangle$.
This proves $(iii)$.

$(iii)\longrightarrow (i)$ \ If $\eta_{_H}\langle
a\rangle\cap\eta_{_H}\langle b\rangle\neq\varnothing$, then for
some $\overline{x}\in B$ we have $a[\overline{x}]\in H$ and
$b[\overline{x}]\in H.$ If $\overline{y}\in\eta_{_H}\langle
a\rangle$, then $a[\overline{y}]\in H$, so from
$a[\overline{x}]\in H$ and $a[\overline{y}]\in H$ we obtain
$a\in\eta^{\circ}_{_H}\langle\overline{x}\rangle\cap
\eta^{\circ}_{_H}\langle\overline{y}\rangle$. Consequently,
$\eta^{\circ}_{_H}\langle\overline{x}\rangle\cap
\eta^{\circ}_{_H}\langle\overline{y}\rangle\neq\varnothing$,
whence, by (\ref{e15}), we deduce
$\eta^{\circ}_{_H}\langle\overline{x}\rangle=\eta^{\circ}_{_H}\langle\overline{y}\rangle$.
Since $b[\overline{x}]\in H$, we have
$b\in\eta^{\circ}_{_H}\langle\overline{x}\rangle$, so
$b\in\eta^{\circ}_{_H}\langle\overline{y}\rangle$, i.e.,
$b[\overline{y}]\in H.$ Thus $\overline{y}\in \eta_{_H}\langle
b\rangle$. Therefore $\eta_{_H}\langle
a\rangle\subseteq\eta_{_H}\langle b\rangle$. Similarly we can show
that $\eta_{_H}\langle b\rangle\subseteq\eta_{_H}\langle
a\rangle$. Hence $\eta_{_H}\langle a\rangle=\eta_{_H}\langle
b\rangle$, which means that $H$ is $l$-strong.
\end{proof}

Note that according to (\ref{e14}) the empty subset is $l$-strong.

\begin{proposition}\label{P3.5}
Each nonempty normal $l$-complex $H$ of a Menger algebra $(G,o)$ is an $\mathcal{L}_H$-equivalence class different from $l$-residue $_H\!W.$
\end{proposition}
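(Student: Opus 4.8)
The plan is to mirror the structure of the proof of Proposition \ref{P2.3}, replacing translations $t\in T_n(G)$ by right multiplications $\overline{x}\in B$ and the membership condition $t(a)\in H$ by $a[\overline{x}]\in H$. Concretely, I would first show that $H$ is contained in a single $\mathcal{L}_H$-class, then show that this class is exactly $H$, and finally observe that it cannot be $_H\!W$.

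First I would take $h_1,h_2\in H$ and show $(h_1,h_2)\in\mathcal{L}_H$. Suppose $h_1[\overline{x}]\in H$ for some $\overline{x}\in B$. Then $h_1\in H$, $h_2\in H$, $h_1[\overline{x}]\in H$, so the defining implication \eqref{e2} of a normal $l$-complex gives $h_2[\overline{x}]\in H$. The symmetric argument shows $h_2[\overline{x}]\in H$ implies $h_1[\overline{x}]\in H$. Hence $\eta_{_H}\langle h_1\rangle=\eta_{_H}\langle h_2\rangle$, i.e. $(h_1,h_2)\in\mathcal{L}_H$. Thus all of $H$ lies in one $\mathcal{L}_H$-class; call it $X$.

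Next I would prove $H=X$. Pick any $h\in H$ and any $g\in X$, so $(h,g)\in\mathcal{L}_H$, meaning $h[\overline{x}]\in H\leftrightarrow g[\overline{x}]\in H$ for all $\overline{x}\in B$. Specializing to $\overline{x}=\overline{e}$ and using $g[\overline{e}]=g$, $h[\overline{e}]=h$, we get $h\in H\leftrightarrow g\in H$, and since $h\in H$ this forces $g\in H$. So $X\subseteq H$, and combined with $H\subseteq X$ we get $H=X$. Finally, since $_H\!W\cap H=\varnothing$ (noted just before Proposition \ref{P3.1}) and $H\neq\varnothing$, the class $H$ is different from $_H\!W$, completing the proof.

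There is no real obstacle here; the only thing to be slightly careful about is the direction of the substitution in \eqref{e2} — it requires that the two "left arguments" $g_1,g_2$ lie in $H$ while the third hypothesis is about $g_1[\overline{x}]$, which is precisely what we have with $g_1=h_1$, $g_2=h_2$ (and then with the roles swapped). The step using $\overline{x}=\overline{e}$ to collapse the equivalence to plain set membership is the analog of "putting $t$ equal to the identity translation" in Proposition \ref{P2.3}, and is the key point that pins down the class as $H$ itself rather than merely containing $H$.

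\begin{proof}
Let $h_1,h_2\in H$ and suppose $h_1[\overline{x}]\in H$ for some $\overline{x}\in B$. Then $h_1,h_2,h_1[\overline{x}]\in H$, so, since $H$ is a normal $l$-complex, \eqref{e2} yields $h_2[\overline{x}]\in H$. Symmetrically, $h_2[\overline{x}]\in H$ implies $h_1[\overline{x}]\in H$. Hence $\eta_{_H}\langle h_1\rangle=\eta_{_H}\langle h_2\rangle$, i.e., $(h_1,h_2)\in\mathcal{L}_H$ for all $h_1,h_2\in H$. Thus $H$ is contained in some $\mathcal{L}_H$-class; denote it by $X$.

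Now let $h\in H$ and let $g$ be an arbitrary element of $X$. Then $(h,g)\in\mathcal{L}_H$, so for every $\overline{x}\in B$ we have $h[\overline{x}]\in H\longleftrightarrow g[\overline{x}]\in H$. Taking $\overline{x}=\overline{e}$ and using $h[\overline{e}]=h$, $g[\overline{e}]=g$, we get $h\in H\longleftrightarrow g\in H$, whence $g\in H$. So $X\subseteq H$, and therefore $H=X$.

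Since $H\neq\varnothing$ and $_H\!W\cap H=\varnothing$, the $\mathcal{L}_H$-class $H$ is different from $_H\!W$.
\end{proof}
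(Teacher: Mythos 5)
Your proof is correct and follows essentially the same argument as the paper: use \eqref{e2} to place all of $H$ in one $\mathcal{L}_H$-class, then specialize to $\overline{x}=\overline{e}$ to show the class equals $H$, and conclude from $H\cap{}_H\!W=\varnothing$ that the class is not $_H\!W$. No gaps.
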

\begin{proof}
Let $h_1,h_2\in H$ and $h_1[\overline{x}]\in H$ for some $\overline{x}\in B$. Then, by (\ref{e2}), we obtain
$h_2[\overline{x}]\in H$. Similarly, $h_2[\overline{x}]\in H$ implies $h_1[\overline{x}]\in H$. So, for all $\overline{x}\in B$ we have $h_1[\overline{x}]\in H\longleftrightarrow h_2[\overline{x}]\in H.$ Thus $(h_1,h_2)\in\mathcal{L}_H$. Consequently, $H$ is contained in some $\mathcal{L}_H$-class. Denote this
class by $X$. Hence, $H\subseteq X$.

Let $g$ be an arbitrary element of $X$ and $h\in H.$ Then obviously $(h,g)\in\mathcal{L}_H$, i.e.,
$h[\overline{x}]\in H\longleftrightarrow g[\overline{x}]\in H$ for all $\overline{x}\in B$. This for $\overline{x}=\overline{e}$ means that $h\in H\longleftrightarrow g\in H$, so $g\in H.$ Therefore,
$X\subseteq H.$ Consequently, $H=X.$ Since $H\cap\,_H\!W=\varnothing$, then $X\neq\,_H\!W.$
\end{proof}

\begin{collolary}\label{C3.6}
Each nonempty $l$-strong subset $H$ of a Menger algebra $(G,o)$ is an $\mathcal{L}_H$-equivalence class different from $l$-residue $_H\!W.$
\end{collolary}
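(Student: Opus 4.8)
The plan is to mimic exactly the argument that proved Proposition \ref{P2.9} in the $v$-case, only now using the $l$-strong/normal-$l$-complex machinery instead of the strong/normal-$v$-complex machinery. Concretely, I would first observe that a nonempty $l$-strong subset $H$ satisfies condition $(ii)$ of Theorem \ref{T3.4}, i.e.\ the implication \eqref{e14}: for all $g_1,g_2\in G$ and $\overline{x},\overline{y}\in B$,
\[
g_1[\overline{x}]\in H\wedge g_2[\overline{x}]\in H\wedge g_2[\overline{y}]\in H\longrightarrow g_1[\overline{y}]\in H .
\]

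Next I would specialize this implication by taking $\overline{x}=\overline{e}$. Since $g[\overline{e}]=g$ for every $g\in G$ by the normalization convention adopted in the Preliminaries, the hypotheses $g_1[\overline{e}]\in H$ and $g_2[\overline{e}]\in H$ become simply $g_1\in H$ and $g_2\in H$, while the remaining hypothesis is $g_2[\overline{y}]\in H$ and the conclusion is $g_1[\overline{y}]\in H$. That is precisely the defining implication \eqref{e2} of a normal $l$-complex (with the roles of $g_1,g_2$ read in the appropriate direction). Hence every nonempty $l$-strong subset is a normal $l$-complex.

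Finally I would invoke Proposition \ref{P3.5}, which asserts that every nonempty normal $l$-complex $H$ is an $\mathcal{L}_H$-equivalence class different from the $l$-residue $_H\!W$. Combining the two implications yields the corollary.

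I do not expect any real obstacle here; the only point that requires a moment's care is the direction of the implication \eqref{e14} versus \eqref{e2} after the substitution $\overline{x}=\overline{e}$ — one must check that, after symmetrizing in $g_1$ and $g_2$ (which is legitimate because \eqref{e14} holds for \emph{all} $g_1,g_2$), the statement obtained is literally the normal $l$-complex condition rather than a mere variant of it. Everything else is a direct citation of Theorem \ref{T3.4} and Proposition \ref{P3.5}, exactly paralleling how Proposition \ref{P2.9} followed from Theorem \ref{T2.8}$(b)$ and Proposition \ref{P2.3} in the $v$-setting.

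\begin{proof}
Let $H\neq\varnothing$ be an $l$-strong subset of $(G,o)$. By Theorem \ref{T3.4}, $H$ satisfies condition $(ii)$, i.e.\ the implication \eqref{e14} holds for all $g_1,g_2\in G$ and all $\overline{x},\overline{y}\in B$. Put $\overline{x}=\overline{e}$. Since $g[\overline{e}]=g$ for every $g\in G$, the implication \eqref{e14} becomes
\[
g_1\in H\wedge g_2\in H\wedge g_2[\overline{y}]\in H\longrightarrow g_1[\overline{y}]\in H
\]
for all $g_1,g_2\in G$ and $\overline{y}\in B$. Interchanging the names of $g_1$ and $g_2$ (which is allowed, since the implication holds for all pairs), we obtain exactly \eqref{e2}, i.e.\ $H$ is a normal $l$-complex. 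By Proposition \ref{P3.5}, $H$ is an $\mathcal{L}_H$-equivalence class different from the $l$-residue $_H\!W$.
\end{proof}
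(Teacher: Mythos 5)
Your proof is correct and follows exactly the paper's argument: the paper's own proof of Corollary \ref{C3.6} likewise sets $\overline{x}=\overline{e}$ in \eqref{e14} to conclude that $H$ is a normal $l$-complex and then (implicitly) applies Proposition \ref{P3.5}. You have merely made the renaming of $g_1,g_2$ and the citation of Proposition \ref{P3.5} explicit, which is fine.
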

\begin{proof}
Indeed, putting in (\ref{e14}) $\overline{x}=\overline{e}$ we can see that any $l$-strong subset is a normal
$l$-complex.
\end{proof}

\begin{proposition}\label{P3.7}
Let $H$ be an $l$-strong subset of a Menger algebra $(G,o)$. Then all $\mathcal{L}_H$-classes are nonempty members of the family
\[
\mathcal{E}=\{\eta^{\circ}_{_H}\langle\overline{x}\rangle\,|\,\overline{x}\in B\}\cup\{_H\!W \}.
\]
\end{proposition}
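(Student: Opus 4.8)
The plan is to follow the same scheme as in the proof of Proposition~\ref{P2.10}, with the maps $\eta_{_H}$ and $\eta^{\circ}_{_H}$ playing the roles of $\rho_{_H}$ and $\rho^{\circ}_{_H}$. First I would show that $\mathcal{E}$ is a partition of $G$; then I would show that a subset of $G$ is an $\mathcal{L}_H$-class if and only if it is one of the (nonempty) members of $\mathcal{E}$.

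For the partition claim, note that $\bigcup\mathcal{E}\subseteq G$ trivially, and for $g\in G$ either $g[\overline{x}]\notin H$ for all $\overline{x}\in B$, so $g\in\,_H\!W$, or $g[\overline{x}]\in H$ for some $\overline{x}\in B$, so $g\in\eta^{\circ}_{_H}\langle\overline{x}\rangle$; hence $G=\bigcup\mathcal{E}$. For disjointness, if $g\in\eta^{\circ}_{_H}\langle\overline{x}\rangle\cap\,_H\!W$ then $g[\overline{x}]\in H$ and $g[\overline{x}]\notin H$, a contradiction, so $\eta^{\circ}_{_H}\langle\overline{x}\rangle\cap\,_H\!W=\varnothing$ for every $\overline{x}\in B$; and if $\eta^{\circ}_{_H}\langle\overline{x}\rangle\cap\eta^{\circ}_{_H}\langle\overline{y}\rangle\neq\varnothing$, then condition $(iii)$ of Theorem~\ref{T3.4}, i.e. \eqref{e15}, forces $\eta^{\circ}_{_H}\langle\overline{x}\rangle=\eta^{\circ}_{_H}\langle\overline{y}\rangle$. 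Thus distinct members of $\mathcal{E}$ are disjoint, so $\mathcal{E}$ is a partition of $G$.

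For the identification of $\mathcal{L}_H$-classes: from Proposition~\ref{P3.1} (and the discussion preceding it) we already know that $_H\!W$, when nonempty, is an $\mathcal{L}_H$-class. Now take $(a,b)\in\mathcal{L}_H$, that is $\eta_{_H}\langle a\rangle=\eta_{_H}\langle b\rangle$. Either this common set is empty, in which case $a,b\in\,_H\!W$, or it contains some $\overline{x}\in B$, so that $a[\overline{x}]\in H$ and $b[\overline{x}]\in H$, i.e. $a,b\in\eta^{\circ}_{_H}\langle\overline{x}\rangle$; hence every $\mathcal{L}_H$-class is contained in a member of $\mathcal{E}$. Conversely, if $a,b\in\eta^{\circ}_{_H}\langle\overline{x}\rangle$ then $\overline{x}\in\eta_{_H}\langle a\rangle\cap\eta_{_H}\langle b\rangle$, so $\eta_{_H}\langle a\rangle\cap\eta_{_H}\langle b\rangle\neq\varnothing$, and $l$-strongness yields $\eta_{_H}\langle a\rangle=\eta_{_H}\langle b\rangle$, i.e. $(a,b)\in\mathcal{L}_H$. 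Therefore each nonempty $\eta^{\circ}_{_H}\langle\overline{x}\rangle$ is precisely an $\mathcal{L}_H$-class; combining this with the partition property, every $\mathcal{L}_H$-class is a nonempty member of $\mathcal{E}$.

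There is essentially no obstacle: the whole argument is a transcription of the proof of Proposition~\ref{P2.10}, with $B$ in place of $T_n(G)$ and \eqref{e15} in place of \eqref{e9}. The only minor point to keep straight is that $_H\!W$ may be empty, in which case it simply fails to be one of the $\mathcal{L}_H$-classes while still being (vacuously) an element of $\mathcal{E}$; this does not affect the statement, which only asserts that every $\mathcal{L}_H$-class is a nonempty member of $\mathcal{E}$.
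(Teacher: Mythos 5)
Your proof is correct and follows essentially the same route as the paper's: first establish that $\mathcal{E}$ is a partition of $G$ (using $l$-strongness, via condition \eqref{e15}, for disjointness of the sets $\eta^{\circ}_{_H}\langle\overline{x}\rangle$), then show that each $\mathcal{L}_H$-class lies in a member of $\mathcal{E}$ and that each nonempty $\eta^{\circ}_{_H}\langle\overline{x}\rangle$ is a single $\mathcal{L}_H$-class. Your closing remark about the case $_H\!W=\varnothing$ is a sensible clarification that the paper leaves implicit.
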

\begin{proof}
First, we show that $\mathcal{E} $ is a partition of $G$. Clearly, $\bigcup\mathcal{E}\subseteq G$ since $_H\!W\subset G$ and $\eta^{\circ}_{_H}\langle\overline{x}\rangle\subset G$ for each $\overline{x}\in B$. On the other hand, for an arbitrary element $g\in G$ we have either $(\forall\overline{x}\in B)\,g[\overline{x}]\not\in H,$ or $g[\overline{x}]\in H$ for some $\overline{x}\in B$, that is, either $g\in_H\!\!\!W,$ or $g\in\eta^{\circ}_{_H}\langle\overline{x}\rangle$ for some $\overline{x}\in B$. Therefore, $g\in\bigcup\mathcal{E}$, so $G\subseteq\bigcup\mathcal{E}$. Thus, $G=\bigcup\mathcal{E}$, i.e.,
$\mathcal{E}$ covers $G$.

Moreover, members of $\mathcal{E}$ are pairwise disjoint. Indeed,
if
$\eta^{\circ}_{_H}\langle\overline{y}\rangle\cap_H\!W\neq\varnothing$
for some $\overline{y}\in B$, then there exists an element $g\in
G$ such that $g\in\eta^{\circ}_{_H}\langle\overline{y}\rangle$ and
$g\in _H \!\!\!W$, i.e., $g[\overline{y}]\in H$ and
$(\forall\overline{x}\in B)\,g [\overline{x}]\not\in H,$ so for
$\overline{x}=\overline{y}$ we have $g[\overline{y}]\in H$ and
$g[\overline{y}]\not\in H$, which is impossible. Thus,
$\eta^{\circ}_{_H}\langle\overline{y}\rangle\cap_H
\!\!W=\varnothing$ for every $\overline{y}\in B$. In the case
$\eta^{\circ}_{_H}\langle\overline{x}\rangle\cap\eta^{\circ}_{_H}\langle\overline{y}\rangle\neq\varnothing$
we obtain
$\eta^{\circ}_{_H}\langle\overline{x}\rangle=\eta^{\circ}_{_H}\langle\overline{y}\rangle$
because $H$ is an $l$-strong subset. Thus, if
$\eta^{\circ}_{_H}\langle\overline{x}\rangle\ne\eta^{\circ}_{_H}\langle\overline{y}\rangle$,
then
$\eta^{\circ}_{_H}\langle\overline{x}\rangle\cap\eta^{\circ}_{_H}\langle\overline{y}\rangle=\varnothing$,
for all $\overline{x},\overline{y}\in B$. Hence all members of
$\mathcal{E}$ are pairwise disjoint subsets. Therefore
$\mathcal{E}$ is a partition of $G.$

Now let $(g_1,g_2)\in\mathcal{L}_H$, i.e., $\eta_{_H}\langle g_1\rangle=\eta_{_H}\langle g_2\rangle$. If $\eta_{_H}\langle g_1\rangle=\varnothing$, then $g_1,g_2\in {}_H\!W$, so $g_1$ and $g_2$ are in an $\mathcal{L}_H$-class $_H\!W.$ If $\eta_{_H}\langle g_1\rangle\neq\varnothing$, then there exists
$\overline{x}\in B$ such that $\overline{x}\in\eta_{_H}\langle g_1\rangle=\eta_{_H}\langle g_2\rangle$, so $g_1[\overline{x}]\in H$ and $g_2[\overline{x}]\in H$, hence $g_1,g_2\in\eta^{\circ}_{_H}\langle\overline{x}\rangle$.

Conversely, if $g_1,g_2\in\eta^{\circ}_{_H}\langle\overline{x}\rangle$ for some $\overline{x}\in B$, then $g_1[\overline{x}]\in H$ and $g_2[\overline{x}]\in H$, so $\overline{x}\in\eta_{_H}\langle
g_1\rangle\cap\eta_{_H}\langle g_2\rangle$. Consequently, $\eta_{_H}\langle g_1\rangle\cap\eta_{_H}\langle
g_2\rangle\neq\varnothing$, whence $\eta_{_H}\langle g_1\rangle=\eta_{_H}\langle g_2\rangle$, i.e.,
$(g_1,g_2)\in\mathcal{L}_H$. Thus, $\eta^{\circ}_{_H}\langle\overline{x}\rangle$ is an
$\mathcal{L}_H$-class.
\end{proof}

\begin{theorem}\label{T3.8}
Let $H$ be a nonempty $l$-strong subset of a Menger algebra
$(G,o)$. Then each $\mathcal{L}_H$-class $X\ne\, _H\!W$ is
$l$-strong, $_H\!W\subseteq_X\!\!W,$
$\mathcal{L}_H\subseteq\mathcal{L}_X$ and restrictions of
$\mathcal{L}_H$ and $\mathcal{L}_X$  on $G\!\setminus _X\!\!W$
coincide.
\end{theorem}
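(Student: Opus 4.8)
The plan is to mimic the proof of Theorem~\ref{T2.11}, with the monoid of translations $(T_n(G),\circ)$ replaced by the semigroup $(B,*)$, the key computational tool being the associativity identity $(g[\overline{y}])[\overline{x}]=g[\overline{y}*\overline{x}]$, which follows at once from the superassociative law \eqref{e1} and the definition of $*$ (and which remains valid when $\overline{x}$ or $\overline{y}$ is $\overline{e}$). Since $X\ne {}_H\!W$ is an $\mathcal{L}_H$-class, Proposition~\ref{P3.7} provides some $\overline{x}\in B$ with $X=\eta^{\circ}_{_H}\langle\overline{x}\rangle$; throughout, this fixed $\overline{x}$ plays the role of the translation $t$ in the proof of Theorem~\ref{T2.11}, but it is multiplied on the \emph{right} (one forms products $\overline{u}*\overline{x}$), which is exactly what makes the $l$-case genuinely different from the $v$-case.

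To prove that $X$ is $l$-strong I would take $y,z\in G$ with $\eta_{_X}\langle y\rangle\cap\eta_{_X}\langle z\rangle\ne\varnothing$ and choose $\overline{u}$ in this intersection, so that $y[\overline{u}]$ and $z[\overline{u}]$ lie in $X=\eta^{\circ}_{_H}\langle\overline{x}\rangle$; rewriting $(y[\overline{u}])[\overline{x}]=y[\overline{u}*\overline{x}]$ and $(z[\overline{u}])[\overline{x}]=z[\overline{u}*\overline{x}]$ shows $\overline{u}*\overline{x}\in\eta_{_H}\langle y\rangle\cap\eta_{_H}\langle z\rangle$, and since $H$ is $l$-strong, $\eta_{_H}\langle y\rangle=\eta_{_H}\langle z\rangle$. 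Then for any $\overline{v}\in\eta_{_X}\langle y\rangle$ we have $y[\overline{v}*\overline{x}]=(y[\overline{v}])[\overline{x}]\in H$, so $\overline{v}*\overline{x}\in\eta_{_H}\langle y\rangle=\eta_{_H}\langle z\rangle$, hence $z[\overline{v}]\in\eta^{\circ}_{_H}\langle\overline{x}\rangle=X$, i.e. $\overline{v}\in\eta_{_X}\langle z\rangle$; this gives $\eta_{_X}\langle y\rangle\subseteq\eta_{_X}\langle z\rangle$, and the reverse inclusion is symmetric, so $\eta_{_X}\langle y\rangle=\eta_{_X}\langle z\rangle$ and $X$ is $l$-strong.

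The inclusion $\mathcal{L}_H\subseteq\mathcal{L}_X$ uses the same move: if $(a,b)\in\mathcal{L}_H$ and $\overline{v}\in\eta_{_X}\langle a\rangle$, then $a[\overline{v}*\overline{x}]\in H$, so $\overline{v}*\overline{x}\in\eta_{_H}\langle a\rangle=\eta_{_H}\langle b\rangle$, whence $b[\overline{v}]\in X$ and $\overline{v}\in\eta_{_X}\langle b\rangle$; by symmetry $\eta_{_X}\langle a\rangle=\eta_{_X}\langle b\rangle$, i.e. $(a,b)\in\mathcal{L}_X$. In particular ${}_X\!W$ is then a union of $\mathcal{L}_H$-classes. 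For ${}_H\!W\subseteq {}_X\!W$ I would argue by contradiction: if $a\in {}_H\!W$ but $\eta_{_X}\langle a\rangle\ne\varnothing$, then picking $\overline{v}$ there yields $a[\overline{v}*\overline{x}]\in H$, contradicting $\eta_{_H}\langle a\rangle=\varnothing$; hence $\eta_{_X}\langle a\rangle=\varnothing$, i.e. $a\in {}_X\!W$. Finally, since $\mathcal{L}_H\subseteq\mathcal{L}_X$, to see that the two relations agree on $G\setminus {}_X\!W$ it suffices to prove the reverse inclusion there: given $(a,b)\in\mathcal{L}_X$ with $a,b\notin {}_X\!W$, the common nonempty set $\eta_{_X}\langle a\rangle=\eta_{_X}\langle b\rangle$ contains some $\overline{v}$ with $a[\overline{v}],b[\overline{v}]\in X=\eta^{\circ}_{_H}\langle\overline{x}\rangle$, so $\overline{v}*\overline{x}\in\eta_{_H}\langle a\rangle\cap\eta_{_H}\langle b\rangle$, and $l$-strongness of $H$ forces $\eta_{_H}\langle a\rangle=\eta_{_H}\langle b\rangle$, i.e. $(a,b)\in\mathcal{L}_H$.

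I do not expect a real obstacle here. The only points demanding care are: applying the identity $(g[\overline{y}])[\overline{x}]=g[\overline{y}*\overline{x}]$ with the two factors in the correct order (the $X$-multiplier $\overline{x}$ always ends up on the right), checking that products of members of $B$ remain in $B$ — which is immediate since $(B,*)$ is a semigroup — and invoking the $l$-strongness of $H$ precisely at the two moments where a nonempty intersection of $\eta_{_H}$-sets has been produced. Once $\mathcal{L}_H\subseteq\mathcal{L}_X$ is established, the statements about the residues ${}_H\!W$ and ${}_X\!W$ and about the coincidence of the restrictions are routine.
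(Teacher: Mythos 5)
Your proposal is correct and follows essentially the same route as the paper's own proof: fix $\overline{x}$ with $X=\eta^{\circ}_{_H}\langle\overline{x}\rangle$ via Proposition~\ref{P3.7}, and use the identity $(g[\overline{u}])[\overline{x}]=g[\overline{u}*\overline{x}]$ to transfer membership questions about $X$ back to $H$, invoking $l$-strongness of $H$ exactly where a nonempty intersection of $\eta_{_H}$-sets appears. The treatment of $l$-strongness of $X$, of $\mathcal{L}_H\subseteq\mathcal{L}_X$, of ${}_H\!W\subseteq{}_X\!W$, and of the coincidence of the restrictions on $G\setminus{}_X\!W$ all match the paper's argument step for step, up to renaming of variables.
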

\begin{proof}
Let $\eta_{_X}\langle a\rangle\cap\eta_{_X}\langle b\rangle\neq\varnothing$, then there exists $\overline{x}\in B$ such that $\overline{x}\in\eta_{_X}\langle a\rangle\cap\eta_{_X}\langle b\rangle$. So, $a[\overline{x}]\in X$
and $b[\overline{x}]\in X$. Since $X\ne\, _H\!W$ is an $\mathcal{L}_H$-class, by
Proposition \ref{P3.7}, we have $X=\eta^{\circ}_{_H}\langle\overline{y}\rangle$ for some
$\overline{y}\in B$. Consequently, $a[\overline{x}]\in\eta^{\circ}_{_H}\langle\overline{y}\rangle$ and
$b[\overline{x}]\in \eta^{\circ}_{_H}\langle\overline{y}\rangle$, so
$a[\overline{x}][\overline{y}]\in H$ and $b[\overline{x}][\overline{y}]\in H,$ i.e.,
$a[\overline{x}*\overline{y}]\in H$ and $b[\overline{x}*\overline{y}]\in H.$ Thus $\overline{x}*\overline{y}\in \eta_{_H}\langle a\rangle$ and
$\overline{x}*\overline{y}\in\eta_{_H}\langle b\rangle$. Therefore
$\eta_{_H}\langle a\rangle\cap\eta_{_H}\langle b\rangle\neq\varnothing$. Since $H$ is $l$-strong, the last gives $\eta_{_H}\langle a\rangle=\eta_{_H}\langle b\rangle$. Now let $\overline{z}\in\eta_{_X}\langle a\rangle$, i.e., $a[\overline{z}]\in X=\eta^{\circ}_{_H}\langle\overline{y}\rangle$. Then $a[\overline{z}*\overline{y}]\in H,$ and consequently,
$\overline{z}*\overline{y}\in \eta_{_H}\langle a\rangle=\eta_{_H}\langle b\rangle$. Thus, $\overline{z}*\overline{y}\in \eta_{_H}\langle b\rangle$, so $b[\overline{z}*\overline{y}]\in H$, whence we obtain $b[\overline{z}]\in\eta^{\circ}_{_H}\langle\overline{y}\rangle=X$. This means that $\overline{z}\in\eta_{_X}\langle b\rangle$, so $\eta_{_X}\langle a\rangle\subseteq\eta_{_X}\langle b\rangle$.
Similarly, we show $\eta_{_X}\langle b\rangle\subseteq\eta_{_X}\langle a\rangle$. Hence,
$\eta_{_X}\langle a\rangle=\eta_{_X}\langle b\rangle$, proves that $X$ is $l$-strong.

Let $(a,b)\in\mathcal{L}_H$, i.e., $\eta_{_H}\langle a\rangle=\eta_{_H}\langle b\rangle$. If
$\overline{u}\in\eta_{_X}\langle a\rangle$, then $a[\overline{u}]\in X=\eta^{\circ}_{_H}\langle\overline{y}\rangle$. So, $a[\overline{u}*\overline{y}]\in H,$ consequently
$\overline{u}*\overline{y}\in\eta_{_H}\langle a\rangle$, whence
$\overline{u}*\overline{y}\in\eta_{_H}\langle b\rangle$, i.e.,
$b[\overline{u}*\overline{y}]\in H$. Thus, $b[\overline{u}]\in\eta^{\circ}_{_H}\langle\overline{y}\rangle=X$, so $\overline{u}\in \eta_{_X}\langle b\rangle$. Therefore, $\eta_{_X}\langle
a\rangle\subseteq \eta_{_X}\langle b\rangle$. Similarly we obtain the reverse inclusion. Hence $\eta_{_X}\langle a\rangle=\eta_{_X}\langle b\rangle$. This shows that $(a,b)\in\mathcal{L}_X$. Therefore $\mathcal{L}_H\subseteq\mathcal{L}_X$.

Thus, $_H\!W$ is contained in some $\mathcal{L}_X$-class. We show that it is the class $_X\!W$. Obviously
$\eta_{_H}\langle a\rangle=\varnothing$ for any $a\in_H\!\!W$. Then also $\eta_{_X}\langle a\rangle=\varnothing$. Indeed, if $\eta_{_X}\langle a\rangle\neq\varnothing$, then for some $\overline{w}\in B$ we have $\overline{w}\in\eta_{_X}\langle a\rangle$ and $a[\overline{w}]\in \eta^{\circ}_{_H}\langle\overline{y}\rangle$ because $X=\eta^{\circ}_{_H}\langle\overline{y}\rangle$. Thus
$a[\overline{w}*\overline{y}]\in H.$ Consequently,
$\overline{w}*\overline{y}\in \eta_{_H}\langle a\rangle$, which implies
$\eta_{_H}\langle a\rangle\neq\varnothing$, i.e., $a\not\in _H\!\!\!W$. Obtained contradiction proves that $\eta_{_X}\langle a\rangle=\varnothing$. So, $a\in_X\!\!W$ and $_H\!W\subseteq_X\!\!\!W$.

Since $\mathcal{L}_H\subseteq\mathcal{L}_X$, the set $_X \!W$ is a
union of some $\mathcal{L}_H$-classes. We show that restrictions
of $\mathcal{L}_H$ and $\mathcal{L}_X$ on $G\!\setminus _X\!\!W$
 coincide. For this we show that for all $a,b\in
G\setminus_X\!\!W $ such that $(a,b)\in\mathcal{L}_X$ we have
$(a,b)\in\mathcal{L}_H$. In fact, if $a,b\in G\setminus_X \!\!W$
and $(a,b)\in\mathcal{L}_X$, then $\eta_{_H}\langle
a\rangle=\eta_{_H}\langle b\rangle\neq\varnothing$. Thus
$a[\overline{v}]\in X$ and $b[\overline{v}]\in X$ for some
$\overline{v}\in B$, that is,
$a[\overline{v}]\in\eta^{\circ}_{_H}\langle\overline{y}\rangle$
and
$b[\overline{v}]\in\eta^{\circ}_{_H}\langle\overline{y}\rangle$.
Therefore, $a[\overline{v}*\overline{y}]\in H$ and
$b[\overline{v}*\overline{y}]\in H$, which implies
$\overline{v}*\overline{y}\in\eta_{_H}\langle
a\rangle\cap\eta_{_H}\langle b\rangle$. So, $\eta_{_H}\langle
a\rangle\cap\eta_{_H}\langle b\rangle\neq\varnothing$. Since $H$
is $l$-strong, the last implies $\eta_{_H}\langle
a\rangle=\eta_{_H}\langle b\rangle$, i.e.,
$(a,b)\in\mathcal{L}_H$. Thus,
\[
\mathcal{L}_H \cap (G\setminus _X \!\!W)\times(G\setminus_X\!\!W)=\mathcal{L}_X\cap(G\setminus_X\!\!W)\times (G\setminus_X\!\!W),
\]
as required.
\end{proof}

\begin{collolary}\label{C3.9}
If $H$ is an $l$-strong subset of a Menger algebra $(G,o)$, then for every $\overline{x}\in B$ the set
$\eta^{\circ}_{_H}\langle\overline{x}\rangle$ is $l$-strong.
\end{collolary}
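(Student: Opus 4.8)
The plan is to imitate, on the $l$-side, the two-case argument that was used to establish Corollary \ref{C2.12}, replacing the data $\rho_{_H}\langle\cdot\rangle$, $\rho^{\circ}_{_H}\langle\cdot\rangle$, $W_H$ by their $l$-analogues $\eta_{_H}\langle\cdot\rangle$, $\eta^{\circ}_{_H}\langle\cdot\rangle$, $_H\!W$, and by invoking Proposition \ref{P3.7} and Theorem \ref{T3.8} in place of Proposition \ref{P2.10} and Theorem \ref{T2.11}.

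First I would dispose of the degenerate case: if $\eta^{\circ}_{_H}\langle\overline{x}\rangle=\varnothing$ for the given $\overline{x}\in B$ (this includes the case $H=\varnothing$, since then $g[\overline{x}]\in H$ is never true), then $\eta^{\circ}_{_H}\langle\overline{x}\rangle$ is the empty set, which is $l$-strong by the remark following Theorem \ref{T3.4}, because the premise of implication (\ref{e14}) fails identically. So from now on I assume $\eta^{\circ}_{_H}\langle\overline{x}\rangle\neq\varnothing$, and in particular $H\neq\varnothing$, so that Theorem \ref{T3.8} is applicable.

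Next, using Proposition \ref{P3.7} with $H$ the given $l$-strong subset, every nonempty member of $\mathcal{E}=\{\eta^{\circ}_{_H}\langle\overline{y}\rangle\mid\overline{y}\in B\}\cup\{{}_H\!W\}$ is an $\mathcal{L}_H$-class; hence $\eta^{\circ}_{_H}\langle\overline{x}\rangle$ is an $\mathcal{L}_H$-class. I would then check it is different from the $l$-residue $_H\!W$: inside the proof of Proposition \ref{P3.7} it was shown that $\eta^{\circ}_{_H}\langle\overline{y}\rangle\cap{}_H\!W=\varnothing$ for every $\overline{y}\in B$, so a nonempty $\eta^{\circ}_{_H}\langle\overline{x}\rangle$ cannot coincide with $_H\!W$. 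Finally, applying Theorem \ref{T3.8} with $X=\eta^{\circ}_{_H}\langle\overline{x}\rangle$ — an $\mathcal{L}_H$-class with $X\neq{}_H\!W$ and $H$ nonempty $l$-strong — yields at once that $X$ is $l$-strong, which is the assertion. I do not expect any genuine obstacle here: the substantive work has already been carried out in Proposition \ref{P3.7} and Theorem \ref{T3.8}, and the only point needing a moment's attention is the verification that the class in question is not $_H\!W$, which is immediate from the disjointness recorded in the proof of Proposition \ref{P3.7}.
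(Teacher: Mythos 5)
Your proof is correct and follows exactly the paper's own argument: dispose of the empty case via the remark after Theorem \ref{T3.4}, then for $\eta^{\circ}_{_H}\langle\overline{x}\rangle\neq\varnothing$ use Proposition \ref{P3.7} to identify it as an $\mathcal{L}_H$-class different from $_H\!W$ and apply Theorem \ref{T3.8}. Your extra care in verifying $\eta^{\circ}_{_H}\langle\overline{x}\rangle\neq{}_H\!W$ and $H\neq\varnothing$ is a welcome but minor elaboration of what the paper leaves implicit.
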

\begin{proof}
Since the empty subset is $l$-strong, so if $\eta^{\circ}_{_H}\langle\overline{x}\rangle=\varnothing$ for some $\overline{x}\in B$, then, obviously, $\eta^{\circ}_{_H}\langle\overline{x}\rangle$ is $l$-strong. If $\eta^{\circ}_{_H}\langle\overline{x}\rangle\neq\varnothing$, then $\eta^{\circ}_{_H}\langle
\overline{x}\rangle$ is an $\mathcal{L}_H$-class different from $_H \!W$ (Proposition \ref{P3.7}). Therefore, by Theorem \ref{T3.8}, $\eta^{\circ}_{_H}\langle\overline{x}\rangle$ is $l$-strong.
\end{proof}

\begin{proposition}\label{P3.10}
Each $\varepsilon$-class $X$ of an $l$-cancellative $l$-congruence
on a Menger algebra $(G,o)$ is $l$-strong and
$\varepsilon\subseteq\mathcal{L}_X$. Moreover, $\varepsilon$ and
$\mathcal{L}_X$  coincide  on $G\!\setminus_X\!\!W$.
\end{proposition}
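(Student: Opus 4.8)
The plan is to establish the three assertions in turn, using only that $\varepsilon$ is an equivalence relation which is simultaneously $l$-regular and $l$-cancellative, together with the characterisation of $l$-strong subsets in Theorem \ref{T3.4}. One preliminary observation will be used repeatedly: since $g[\overline{e}]=g$, both the $l$-regularity and the $l$-cancellativity of $\varepsilon$ extend automatically from $G^n$ to all of $B=G^n\cup\{\overline{e}\}$, the case $\overline{x}=\overline{e}$ being trivial. Also, $X$ is nonempty because it is an $\varepsilon$-class, so $\eta_{_X}\langle g\rangle$, $\mathcal{L}_X$ and ${}_X\!W$ are all well defined.

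To see that $X$ is $l$-strong I would verify condition $(ii)$ of Theorem \ref{T3.4}. Suppose $g_1[\overline{x}],\,g_2[\overline{x}],\,g_2[\overline{y}]\in X$ for some $g_1,g_2\in G$ and $\overline{x},\overline{y}\in B$. Since $g_1[\overline{x}]$ and $g_2[\overline{x}]$ lie in the same $\varepsilon$-class $X$, we have $(g_1[\overline{x}],g_2[\overline{x}])\in\varepsilon$; $l$-cancellativity then gives $(g_1,g_2)\in\varepsilon$, and $l$-regularity gives $(g_1[\overline{y}],g_2[\overline{y}])\in\varepsilon$. As $g_2[\overline{y}]\in X$, this forces $g_1[\overline{y}]\in X$, which is exactly \eqref{e14}.

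For the inclusion $\varepsilon\subseteq\mathcal{L}_X$, I would take $(g_1,g_2)\in\varepsilon$ and $\overline{x}\in B$; if $g_1[\overline{x}]\in X$, then $l$-regularity yields $(g_1[\overline{x}],g_2[\overline{x}])\in\varepsilon$, so $g_2[\overline{x}]$ lies in $X$ as well, and symmetrically, whence $\eta_{_X}\langle g_1\rangle=\eta_{_X}\langle g_2\rangle$, i.e.\ $(g_1,g_2)\in\mathcal{L}_X$. Finally, to show that $\varepsilon$ and $\mathcal{L}_X$ coincide on $G\!\setminus{}_X\!W$, it suffices (given the previous inclusion) to check that $(a,b)\in\mathcal{L}_X$ implies $(a,b)\in\varepsilon$ whenever $a,b\notin{}_X\!W$: from $a\notin{}_X\!W$ we get $a[\overline{v}]\in X$ for some $\overline{v}\in B$, hence $b[\overline{v}]\in X$ because $\eta_{_X}\langle a\rangle=\eta_{_X}\langle b\rangle$, so $(a[\overline{v}],b[\overline{v}])\in\varepsilon$ and $l$-cancellativity delivers $(a,b)\in\varepsilon$.

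I do not expect a genuine obstacle here; the argument is a direct transcription, to rank $n$, of the classical semigroup fact that an $l$-cancellative $l$-congruence is recovered from any one of its classes. The only point demanding a little care is the interplay between the hypotheses, which are stated for tuples in $G^n$, and the set $B$ on which $\eta_{_X}$ and $\mathcal{L}_X$ are defined; this is dispatched by the trivial case $\overline{x}=\overline{e}$ noted at the outset.
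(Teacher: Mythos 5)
Your proof is correct and follows essentially the same route as the paper: $l$-cancellativity pulls a common value $a[\overline{x}],b[\overline{x}]\in X$ back to $(a,b)\in\varepsilon$, and the inclusion $\varepsilon\subseteq\mathcal{L}_X$ pushes it forward. The only cosmetic differences are that you verify $l$-strongness via condition $(ii)$ of Theorem \ref{T3.4} and reprove the inclusion $\varepsilon\subseteq\mathcal{L}_X$ by hand instead of citing Proposition \ref{P3.2}; your explicit remark that the case $\overline{x}=\overline{e}$ is trivial is a small point the paper leaves implicit.
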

\begin{proof}
Let $\eta_{_X}\langle a\rangle\cap\eta_{_X}\langle b\rangle\neq\varnothing$, then $a[\overline{x}]\in X$ and $b[\overline{x}]\in X$ for some $\overline{x}\in B$. Thus, $(a[\overline{x}], b[\overline{x}])\in\varepsilon$, whence by $l$-cancellativity we obtain $(a,b)\in\varepsilon$. This, in view of Proposition \ref{P3.2}, gives $\varepsilon\subseteq\mathcal{L}_X$. Therefore, $(a,b)\in\mathcal{L}_X$, so $\eta_{_X}\langle
a\rangle=\eta_{_X}\langle b\rangle$. Thus, $X$ is an $l$-strong subset of $(G,o)$.

Since $\varepsilon\subseteq\mathcal{L}_X$, we have
$$
\varepsilon\cap(G\!\setminus_X\!\!W)\times (G\!\setminus_X\!\!W)\subseteq
\mathcal{L}_X\cap(G\!\setminus_X\!\!W)\times(G\!\setminus_X\!\!W).
$$
Conversely, if $(a,b)\in\mathcal{L}_X$ and $a,b\in G\!\setminus_X\!\!W$, then
$$
(\forall\overline{x}\in B)\big(a[\overline{x}]\in X\longleftrightarrow b[\overline{x}]\in X\big).
$$
Since $a\not\in_X\!\!\!W$, we have $a[\overline{y}]\in X$ for some
$\overline{y}\in B$, the above means that also $b[\overline{y}]\in
X$. Consequently,
$(a[\overline{y}],b[\overline{y}])\in\varepsilon$. From this, by
$l$-cancellativity, we deduce $(a,b)\in\varepsilon$. So,
$$
\mathcal{L}_X\cap(G\!\setminus_X\!\!W)\times(G\!\setminus_X\!\!W)\subseteq
\varepsilon\cap(G\!\setminus_X \!\!W)\times(G\!\setminus_X\!\!W).
$$
Comparing these two inclusions we obtain
$$
\varepsilon\cap(G\!\setminus_X\!\!W)\times(G\!\setminus_X\!\!W)=
\mathcal{L}_X\cap(G\!\setminus_X\!\!W)\times(G\!\setminus_X\!\!W).
$$
This means that $\varepsilon$ and $\mathcal{L}_X$  coincide on the
set $G\!\setminus_X\!\!W$.
\end{proof}

\section{Principal congruences on Menger algebras}

For a nonempty subset $H$ of a Menger algebra $(G,o)$ of rank $n$ and $B=G^n\cup\{\overline{e}\}$ consider the following subsets:
\[
\sigma_{\!_H}=\{(g,(\overline{x},t))\in G\times(B\times T_n(G))\,|\,t(g[\overline{x}])\in H\},
\]
\[
\sigma_{\!_H}\langle g\rangle=\{(\overline{x},t)\in B\times T_n(G)\,|\,t(g[\overline{x}])\in H\},
\]
\[
\sigma^{\circ}_{\!_H}\langle\overline{x},t\rangle=\{g\in G\,|\,t(g[\overline{x}])\in H\},
\]
\[
W^H=\{g\in G\,|\,\sigma_{\!_H}\langle g\rangle=\varnothing\},
\]
\[
\mathcal{P}_H=\{(g_1,g_2)\in G\times G\,|\,\sigma_{\!_H}\langle g_1\rangle =
\sigma_{\!_H}\langle g_2\rangle\}.
\]
It is clear that $\mathcal{P}_H$ is an equivalence relation on $(G,o)$ and $W^H$ is its an equi\-valence class, provided that it is not empty. Obviously, for any $g_1,g_2,g\in G$ we have:
\begin{eqnarray}
& (g_1,g_2)\in\mathcal{P}_H\longleftrightarrow(\forall t\in T_n(G))(\forall\overline{x}\in B)\; \Big(t(g_1[\overline{x}])\in H\longleftrightarrow t(g_2[\overline{x}])\in H\Big), &\nonumber\\[4pt]
& g\in W^H\longleftrightarrow(\forall t\in T_n(G))(\forall\overline{x}\in B)\;t(g[\overline{x}])\not\in H. & \nonumber
\end{eqnarray}
The set $W^H$ is called \textit{biresidue} of $H.$ Note that $H\cap W^H=\varnothing$ since in the case $H\cap W^H\neq\varnothing$ there is an element $g\in G$ such that $g\in H$ and $t(g[\overline{x}])\not\in H$ for all $t\in T_n(G)$, $\overline{x}\in B$. So, for the identity translation $t$ and $\overline{x}=\overline{e}$ we obtain $g\in H$ and $g\not\in H,$  which is impossible.

\begin{proposition}\label{P4.1}
For any subset $H$ of a Menger algebra $(G,o)$ the relation $\mathcal{P}_H$ is a congruence. Moreover, if the set $W^H$ is nonempty, then it is an $sl$-ideal of $(G,o)$.
\end{proposition}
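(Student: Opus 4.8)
The plan is to treat the two assertions of the proposition separately. First I would observe that $\mathcal{P}_H$ is automatically reflexive, symmetric and transitive, being defined by the equality $\sigma_{\!_H}\langle g_1\rangle=\sigma_{\!_H}\langle g_2\rangle$; so, recalling from Section~1 that a stable equivalence is a congruence and that stability amounts to $l$-regularity together with $v$-regularity, it only remains to verify these two regularity properties. For the statement about $W^H$ I would use that an $sl$-ideal is exactly a subset that is simultaneously an $s$-ideal and an $l$-ideal, and that being an $l$-ideal is equivalent to being an $i$-ideal for every $i=1,\dots,n$; hence it suffices to show that $W^H$ is an $s$-ideal and an $i$-ideal for each $i$.

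Everything then rests on two identities obtained from superassociativity, exactly as in the proofs of Propositions~\ref{P2.1} and~\ref{P3.1}: for $g,u\in G$, $\overline{w},\overline{z}\in G^n$, $\overline{x}\in B$ and $i\in\{1,\dots,n\}$,
\[
(g[\overline{z}])[\overline{x}]=g[\overline{z}*\overline{x}],\qquad (u[\overline{w}\,|_ig])[\overline{x}]=u[\overline{w}'\,|_i(g[\overline{x}])],
\]
where $\overline{z}*\overline{x}\in B$ and $w'_j=w_j[\overline{x}]\in G$; moreover the map $\widetilde t\colon z\mapsto t(u[\overline{w}'\,|_iz])$ is again a polynomial from $T_n(G)$, so that $t\big((u[\overline{w}\,|_ig])[\overline{x}]\big)=\widetilde t\big(g[\overline{x}]\big)$ for every $t\in T_n(G)$.

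Using the first identity, if $(g_1,g_2)\in\mathcal{P}_H$ then for all $t\in T_n(G)$ and $\overline{x}\in B$ the elements $t((g_1[\overline{z}])[\overline{x}])=t(g_1[\overline{z}*\overline{x}])$ and $t((g_2[\overline{z}])[\overline{x}])=t(g_2[\overline{z}*\overline{x}])$ belong to $H$ simultaneously; hence $(g_1[\overline{z}],g_2[\overline{z}])\in\mathcal{P}_H$ and $\mathcal{P}_H$ is $l$-regular. Using the second, if $(g_1,g_2)\in\mathcal{P}_H$ then $t((u[\overline{w}\,|_ig_1])[\overline{x}])=\widetilde t(g_1[\overline{x}])$ and $t((u[\overline{w}\,|_ig_2])[\overline{x}])=\widetilde t(g_2[\overline{x}])$ belong to $H$ simultaneously for all $t,\overline{x}$, so $(u[\overline{w}\,|_ig_1],u[\overline{w}\,|_ig_2])\in\mathcal{P}_H$ and $\mathcal{P}_H$ is $i$-regular for every $i$, hence $v$-regular; combined with $l$-regularity, $\mathcal{P}_H$ is stable, thus a congruence. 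The same two computations applied to an element $h\in W^H$ give $t((h[\overline{z}])[\overline{x}])=t(h[\overline{z}*\overline{x}])\notin H$ and $t((u[\overline{w}\,|_ih])[\overline{x}])=\widetilde t(h[\overline{x}])\notin H$ for all $t,\overline{x}$, i.e. $h[\overline{z}]\in W^H$ and $u[\overline{w}\,|_ih]\in W^H$; so $W^H$ is an $s$-ideal and an $i$-ideal for each $i$, hence an $l$-ideal, and therefore an $sl$-ideal.

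I do not anticipate a real obstacle here: the argument is simply the simultaneous execution of the proofs of Propositions~\ref{P2.1} and~\ref{P3.1}. The only points that need a little care are the two superassociativity rewritings above — checking that $\overline{z}*\overline{x}$ stays inside $B$ and that each $w_j[\overline{x}]$ stays inside $G$ (both hold because $\overline{x}\in B=G^n\cup\{\overline{e}\}$, the boundary case $\overline{x}=\overline{e}$ being consistent with the convention $g[\overline{e}]=g$), and that the auxiliary map $\widetilde t$ is genuinely an elementary translation belonging to $T_n(G)$.
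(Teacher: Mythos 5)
Your proposal is correct and follows essentially the same route as the paper's proof of Proposition \ref{P4.1}: superassociativity rewritings $g[\overline{z}][\overline{x}]=g[\overline{z}*\overline{x}]$ and $u[\overline{w}\,|_ig][\overline{x}]=u[\overline{w}'\,|_i(g[\overline{x}])]$ with the auxiliary translation $\widetilde t$ giving $l$-regularity and $i$-regularity for each $i$, hence stability. The only difference is that you write out explicitly the $s$-ideal and $i$-ideal verifications for $W^H$, which the paper dismisses with ``in a similar way''; your added checks (that $\overline{z}*\overline{x}\in B$, $w_j[\overline{x}]\in G$, and $\widetilde t\in T_n(G)$) are exactly the right points of care and are all valid.
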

\begin{proof}
Let $(g_1,g_2)\in\mathcal{P}_H$. Then for all $t\in T_n(G)$ and $\overline{x}\in B$ we have
\begin{equation}\label{e16}
t(g_1[\overline{x}])\in H\longleftrightarrow t(g_2[\overline{x}])\in H.
\end{equation}
In particular, for $y_1,\ldots,y_n\in G$ we obtain $(y_1[\overline{x}],\ldots,y_n[\overline{x}])\in B$ and
$$
t(g_1[y_1[\overline{x}]\ldots y_n[\overline{x}])\in H\longleftrightarrow
t(g_2[y_1[\overline{x}]\ldots y_n[\overline{x}])\in H,
$$
 whence applying (\ref{e1}) we get
$$
t(g_1[\overline{y}][\overline{x}])\in H\longleftrightarrow
t(g_2[\overline{y}][\overline{x}])\in H,
$$
where $\overline{y}=(y_1,\ldots,y_n)$. Thus, $(g_1[\overline{y}],
g_2[\overline{y}])\in\mathcal{P}_H$. So, the relation
$\mathcal{P}_H$ is $l$-regu\-lar.

Since (\ref{e16}) holds for all $t\in T_n(G)$ and $\overline{x}\in B$, it also holds for all polynomials
$t(x)=t_1(u[\overline{w}_0\,|_i\,x])$, where $t_1\in T_n(G)$, $u\in G,$ $\overline{w}_0=(w_1[\overline{x}],\ldots,w_n[\overline{x}])\in G^n,$
$w_1,\ldots,w_n\in G,$ $i\in\{1,\ldots,n\}$. Therefore,
$$
t_1(u[\overline{w}_0\,|_i\,g_1[\overline{x}]])\in H\longleftrightarrow
t_1(u[\overline{w}_0\,|_i\,g_2[\overline{x}]])\in H,
$$
which in view of superassociativity can be rewritten in the form:
$$
t_1(u[\overline{w}\,|_i\,g_1][\overline{x}])\in H\longleftrightarrow
t_1(u[\overline{w}\,|_i\,g_2][\overline{x}])\in H,
$$
where $\overline{w}=(w_1,\ldots,w_n)$. So, $(u[\overline{w}\,|_i\,g_1],u[\overline{w}\,|_i\,g_2])\in\mathcal{P}_H$ for
all $i=1,\ldots,n$. Hence, $\mathcal{P}_H$ is a $v$-regular relation.

Thus, $\mathcal{P}_H$ is $l$-regular and $v$-regular, so it is stable, that is, $\mathcal{P}_H$ is a
congruence. In a similar way we can verify that $W^H$ is an $sl$-ideal.
\end{proof}

The relation $\mathcal{P}_H$ will be called the \textit{principal congruence} induced by $H.$

\begin{proposition}\label{P4.2}
Any congruence $\varepsilon$ on a Menger algebra $(G,o)$ has the form
$$
\varepsilon=\bigcap\{\mathcal{P}_H\,|\,H\in G/\varepsilon\},
$$
where $G/\varepsilon$ is the set of all $\varepsilon$-classes.
\end{proposition}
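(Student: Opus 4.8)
The plan is to mimic exactly the proofs of Propositions \ref{P2.4} and \ref{P3.2}, since $\mathcal{P}_H$ combines the features of $\mathcal{R}_H$ and $\mathcal{L}_H$. I would prove the two inclusions separately.

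For $\varepsilon\subseteq\bigcap\{\mathcal{P}_H\,|\,H\in G/\varepsilon\}$: take $(g_1,g_2)\in\varepsilon$ and an arbitrary $\varepsilon$-class $H$. Since $\varepsilon$ is a congruence it is stable, hence both $l$-regular and $v$-regular. Fix $t\in T_n(G)$ and $\overline{x}\in B$. From $l$-regularity, $(g_1[\overline{x}],g_2[\overline{x}])\in\varepsilon$; then applying $v$-regularity repeatedly along the polynomial $t$ (equivalently, using that $\varepsilon$ satisfies the implication dual to \eqref{e4}, i.e. $(a,b)\in\varepsilon$ implies $(t(a),t(b))\in\varepsilon$ for every translation $t$), we get $(t(g_1[\overline{x}]),t(g_2[\overline{x}]))\in\varepsilon$. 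Since $H$ is an $\varepsilon$-class, this yields $t(g_1[\overline{x}])\in H\longleftrightarrow t(g_2[\overline{x}])\in H$. As $t$ and $\overline{x}$ were arbitrary, the characterization of $\mathcal{P}_H$ displayed before Proposition \ref{P4.1} gives $(g_1,g_2)\in\mathcal{P}_H$. Since $H$ was an arbitrary $\varepsilon$-class, $(g_1,g_2)\in\bigcap\{\mathcal{P}_H\,|\,H\in G/\varepsilon\}$.

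For the reverse inclusion: take $(g_1,g_2)\in\bigcap\{\mathcal{P}_H\,|\,H\in G/\varepsilon\}$. Then for every $\varepsilon$-class $H$, every $t\in T_n(G)$ and every $\overline{x}\in B$ we have $t(g_1[\overline{x}])\in H\longleftrightarrow t(g_2[\overline{x}])\in H$. Specialize to the identity translation $t$, to $\overline{x}=\overline{e}$ (recall $g[\overline{e}]=g$), and to $H=\langle g_1\rangle$, the $\varepsilon$-class of $g_1$: then $g_1\in\langle g_1\rangle$ forces $g_2\in\langle g_1\rangle$, i.e. $(g_1,g_2)\in\varepsilon$. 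Hence $\bigcap\{\mathcal{P}_H\,|\,H\in G/\varepsilon\}\subseteq\varepsilon$, and combining the two inclusions completes the proof.

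There is no real obstacle here; the only point requiring a little care is the forward direction, where one must invoke both $l$-regularity (to push $\overline{x}$ through) and $v$-regularity (to push the translation $t$ through) of the congruence $\varepsilon$ — exactly the combination that distinguishes this statement from Propositions \ref{P2.4} and \ref{P3.2}. Everything else is the standard "evaluate at the identity translation and $\overline{e}$'' trick already used twice in the paper.

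\begin{proof}
Let $(g_1,g_2)\in\varepsilon$ and let $H$ be an arbitrary $\varepsilon$-class. Since $\varepsilon$ is a congruence, it is stable, hence $l$-regular and $v$-regular. Fix $t\in T_n(G)$ and $\overline{x}\in B$. By $l$-regularity $(g_1[\overline{x}],g_2[\overline{x}])\in\varepsilon$, and then, since $\varepsilon$ is $v$-regular, applying the translation $t$ gives $(t(g_1[\overline{x}]),t(g_2[\overline{x}]))\in\varepsilon$. As $H$ is an $\varepsilon$-class, we obtain $t(g_1[\overline{x}])\in H\longleftrightarrow t(g_2[\overline{x}])\in H$. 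Since $t\in T_n(G)$ and $\overline{x}\in B$ were arbitrary, the characterization of $\mathcal{P}_H$ preceding Proposition~\ref{P4.1} yields $(g_1,g_2)\in\mathcal{P}_H$. Because $H$ was an arbitrary $\varepsilon$-class, $(g_1,g_2)\in\bigcap\{\mathcal{P}_H\,|\,H\in G/\varepsilon\}$. Hence $\varepsilon\subseteq\bigcap\{\mathcal{P}_H\,|\,H\in G/\varepsilon\}$.

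Conversely, let $(g_1,g_2)\in\bigcap\{\mathcal{P}_H\,|\,H\in G/\varepsilon\}$. Then for every $\varepsilon$-class $H$, every $t\in T_n(G)$ and every $\overline{x}\in B$ we have $t(g_1[\overline{x}])\in H\longleftrightarrow t(g_2[\overline{x}])\in H$. Taking $t$ to be the identity translation, $\overline{x}=\overline{e}$ (so that $g_i[\overline{e}]=g_i$), and $H=\langle g_1\rangle$, the $\varepsilon$-class containing $g_1$, we get $g_1\in\langle g_1\rangle\longleftrightarrow g_2\in\langle g_1\rangle$, hence $g_2\in\langle g_1\rangle$, i.e. $(g_1,g_2)\in\varepsilon$. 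Therefore $\bigcap\{\mathcal{P}_H\,|\,H\in G/\varepsilon\}\subseteq\varepsilon$. Comparing the two inclusions completes the proof.
\end{proof}
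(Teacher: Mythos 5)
Your proof is correct and follows essentially the same route as the paper's: the forward inclusion uses $l$-regularity to pass to $(g_1[\overline{x}],g_2[\overline{x}])\in\varepsilon$ and then $v$-regularity (i.e.\ $i$-regularity applied along the structure of $t$, which is legitimate since $\varepsilon$ is reflexive) to reach $(t(g_1[\overline{x}]),t(g_2[\overline{x}]))\in\varepsilon$, and the reverse inclusion specializes to the identity translation, $\overline{x}=\overline{e}$, and the class $\langle g_1\rangle$. No gaps.
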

\begin{proof}
Let $(g_1,g_2)\in\varepsilon$ and $t(g_1[\overline{x}])\in H,$ where $H\in G/\varepsilon$, $t\in T_n(G)$ and $\overline{x}\in B$. As $\varepsilon$ is a congruence, from $(g_1,g_2)\in\varepsilon$, by $l$-regularity, we obtain $(g_1[\overline{x}],g_2[\overline{x}])\in\varepsilon$ for each $\overline{x}\in B,$ which, by $v$-regularity, gives $(t(g_1[\overline{x}]),t(g_2 [\overline{x}]))\in\varepsilon$. Hence, $t(g_1[\overline{x}])$ and $t(g_2[\overline{x}])$ are in the same $\varepsilon$-class. So, $t(g_2[\overline{x}])\in H.$ Analogously, from $t(g_2[\overline{x}])\in H$ we conclude $t(g_1[\overline{x}])\in H.$ Thus, $t(g_1[\overline{x}])\in H\longleftrightarrow t(g_2[\overline{x}])\in H$ for all $t\in T_n(G)$ and $\overline{x}\in B.$ This means that $(g_1,g_2)\in\mathcal{P}_H$. Since $H$ is an arbitrary $\varepsilon$-class, the above proves the inclusion $\varepsilon\subseteq\bigcap\{\mathcal{P}_H\,|\,H\in G/\varepsilon\}$.

Conversely, if $(g_1,g_2)\in\bigcap\{\mathcal{P}_H\,|\,H\in G/\varepsilon\}$, then for all $H\in G/\varepsilon$, each $t\in T_n(G)$ and any $\overline{x}\in B$ we have
$$
t(g_1[\overline{x}])\in H\longleftrightarrow t(g_2[\overline{x}])\in H,
$$
whence for $\overline{x}=\overline{e}$ we get
$$
t(g_1)\in H\longleftrightarrow t(g_2)\in H.
$$
Since this holds for each $t\in T_n(G)$ and $H\in G/\varepsilon$, for the identity
 translation we obtain $g_1\in H\longleftrightarrow g_2\in H.$ So,
 $g_2\in\langle g_1\rangle$, where $\langle g_1\rangle$ is the
 $\varepsilon$-class of $g_1$. Thus, $(g_1,g_2)\in\varepsilon$.
 Consequently, $\bigcap\{\mathcal{P}_H\,|\,H\in G/\varepsilon\}\subseteq\varepsilon$.
 This completes the proof.
\end{proof}

\begin{definition}\label{D4.3}\rm
A subset $H$ of a Menger algebra $(G,o)$ is called \textit{bistrong} if
\begin{equation}\label{e17}
\sigma_{\!_H}\langle a\rangle\cap\sigma_{\!_H}\langle b\rangle\neq\varnothing\longrightarrow \sigma_{\!_H}\langle a\rangle=\sigma_{\!_H}\langle b\rangle
\end{equation}
for any $a,b\in G$.
\end{definition}

\begin{theorem}\label{T4.4}
For a subset $H$ of a Menger algebra $(G,o)$ the following conditions are equivalent.
\begin{itemize}
  \item [$(i)$] \ $H$ is bistrong.
  \item [$(ii)$] For all $g_1,g_2\in G,$ $\overline{x},\overline{y}\in B$ and $t_1,t_2\in T_n(G)$
 \begin{equation}\label{e18}
 t_1(g_1[\overline{x}])\in H\wedge t_1(g_2[\overline{x}])\in H\wedge t_2(g_2[\overline{y}])\in H
\longrightarrow t_2(g_1[\overline{y}])\in H .
\end{equation}
  \item [$(iii)$] \ For all $\overline{x},\overline{y}\in B$ and $t_1,t_2\in T_n (G)$
\begin{equation}\label{e19}
\sigma^{\circ}_{\!_H}\langle\overline{x},t_1\rangle\cap\sigma^{\circ}_{\!_H}\langle\overline{y},t_2\rangle  \neq\varnothing\longrightarrow\sigma^{\circ}_{\!_H}\langle\overline{x},t_1\rangle= \sigma^{\circ}_{\!_H}\langle\overline{y},t_2\rangle .
\end{equation}
\end{itemize}
\end{theorem}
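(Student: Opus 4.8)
The plan is to prove the cycle of implications $(i)\Rightarrow(ii)\Rightarrow(iii)\Rightarrow(i)$, following exactly the scheme used for Theorems \ref{T2.8} and \ref{T3.4}; the argument is purely formal set-theoretic manipulation of the families $\sigma_{\!_H}\langle g\rangle$ and $\sigma^{\circ}_{\!_H}\langle\overline{x},t\rangle$, so no real obstacle is expected — the only point requiring care is correctly matching the ordering of the arguments $(\overline{x},t_1)$ and $(\overline{y},t_2)$ when invoking \eqref{e18} in both directions.

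For $(i)\Rightarrow(ii)$, I would assume $H$ bistrong and take $g_1,g_2\in G$, $\overline{x},\overline{y}\in B$, $t_1,t_2\in T_n(G)$ with $t_1(g_1[\overline{x}])\in H$, $t_1(g_2[\overline{x}])\in H$ and $t_2(g_2[\overline{y}])\in H$. The first two memberships give $(\overline{x},t_1)\in\sigma_{\!_H}\langle g_1\rangle\cap\sigma_{\!_H}\langle g_2\rangle$, so this intersection is nonempty and \eqref{e17} yields $\sigma_{\!_H}\langle g_1\rangle=\sigma_{\!_H}\langle g_2\rangle$. The third membership says $(\overline{y},t_2)\in\sigma_{\!_H}\langle g_2\rangle=\sigma_{\!_H}\langle g_1\rangle$, i.e. $t_2(g_1[\overline{y}])\in H$, which is \eqref{e18}.

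For $(ii)\Rightarrow(iii)$, assume $\sigma^{\circ}_{\!_H}\langle\overline{x},t_1\rangle\cap\sigma^{\circ}_{\!_H}\langle\overline{y},t_2\rangle\neq\varnothing$ and pick $g$ in this intersection, so $t_1(g[\overline{x}])\in H$ and $t_2(g[\overline{y}])\in H$. If $g_1\in\sigma^{\circ}_{\!_H}\langle\overline{x},t_1\rangle$, then $t_1(g_1[\overline{x}])\in H$; applying \eqref{e18} with $g_2:=g$ gives $t_2(g_1[\overline{y}])\in H$, i.e. $g_1\in\sigma^{\circ}_{\!_H}\langle\overline{y},t_2\rangle$, hence $\sigma^{\circ}_{\!_H}\langle\overline{x},t_1\rangle\subseteq\sigma^{\circ}_{\!_H}\langle\overline{y},t_2\rangle$. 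The reverse inclusion follows by applying \eqref{e18} again, now with the pair $(\overline{y},t_2)$ in the role of $(\overline{x},t_1)$ and vice versa (legitimate since \eqref{e18} is quantified over all such data). This gives \eqref{e19}.

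Finally, for $(iii)\Rightarrow(i)$, suppose $\sigma_{\!_H}\langle a\rangle\cap\sigma_{\!_H}\langle b\rangle\neq\varnothing$, so there is $(\overline{x},t_1)$ with $t_1(a[\overline{x}])\in H$ and $t_1(b[\overline{x}])\in H$; thus $a,b\in\sigma^{\circ}_{\!_H}\langle\overline{x},t_1\rangle$. For an arbitrary $(\overline{y},t_2)\in\sigma_{\!_H}\langle a\rangle$ we have $t_2(a[\overline{y}])\in H$, i.e. $a\in\sigma^{\circ}_{\!_H}\langle\overline{y},t_2\rangle$, so $a\in\sigma^{\circ}_{\!_H}\langle\overline{x},t_1\rangle\cap\sigma^{\circ}_{\!_H}\langle\overline{y},t_2\rangle$; by \eqref{e19} this forces $\sigma^{\circ}_{\!_H}\langle\overline{x},t_1\rangle=\sigma^{\circ}_{\!_H}\langle\overline{y},t_2\rangle$, whence $b\in\sigma^{\circ}_{\!_H}\langle\overline{y},t_2\rangle$, i.e. $(\overline{y},t_2)\in\sigma_{\!_H}\langle b\rangle$. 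Therefore $\sigma_{\!_H}\langle a\rangle\subseteq\sigma_{\!_H}\langle b\rangle$, and the reverse inclusion is symmetric, so $\sigma_{\!_H}\langle a\rangle=\sigma_{\!_H}\langle b\rangle$ and $H$ is bistrong. I would also remark, as after Theorem \ref{T3.4}, that \eqref{e18} makes the empty subset bistrong, since its premise is never satisfied.
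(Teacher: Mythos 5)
Your proposal is correct and follows essentially the same cycle $(i)\Rightarrow(ii)\Rightarrow(iii)\Rightarrow(i)$, with the same set-theoretic manipulations of $\sigma_{\!_H}\langle g\rangle$ and $\sigma^{\circ}_{\!_H}\langle\overline{x},t\rangle$, as the paper's own proof. The closing remark about the empty subset being bistrong also matches the note the authors place immediately after the theorem.
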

\begin{proof}
$(i)\longrightarrow (ii)$ \ Let $H$ be a bistrong subset of $(G,o)$. Suppose that $t_1(g_1[\overline{x}]), t_1(g_2[\overline{x}]), t_2(g_2[\overline{y}])\in H$ for some $g_1,g_2\in G,$ $\overline{x},\overline{y}\in B$ and $t_1,t_2\in T_n(G)$. Then from $t_1(g_1[\overline{x}]),t_1(g_2[\overline{x}])\in H$ we obtain
$(\overline{x},t_1)\in\sigma_{\!_H}\langle g_1\rangle\cap\sigma_{\!_H}\langle g_2\rangle$, so
$\sigma_{\!_H}\langle g_1\rangle\cap\sigma_{\!_H}\langle g_2\rangle\neq\varnothing$. This, by (\ref{e17}),
gives $\sigma_{\!_H}\langle g_1\rangle=\sigma_{\!_H}\langle g_2\rangle$. But $t_2(g_2[\overline{y}])\in H,$ hence $(\overline{y},t_2)\in\sigma_{\!_H}\langle g_2\rangle$. Consequently,
$(\overline{y},t_2)\in\sigma_{\!_H}\langle g_1\rangle$, i.e.,
$t_2(g_1[\overline{y}])\in H.$ This proves (\ref{e18}) and $(ii)$.

$(ii)\longrightarrow (iii)$ \ Let $\sigma^{\circ}_{\!_H}\langle\overline{x},t_1\rangle\cap\sigma^{\circ}_{\!_H}\langle\overline{y},
t_2\rangle\neq\varnothing$ for some $\overline{x},\overline{y}\in B,$ $t_1,t_2\in T_n(G)$. Then
there is an element $g\in G$ such that $g\in\sigma^{\circ}_{\!_H}\langle\overline{x}, t_1\rangle$ and
$g\in\sigma^{\circ}_{\!_H}\langle\overline{y}, t_2\rangle$, so,
$t_1(g[\overline{x}])\in H$ and $t_2(g[\overline{y}])\in H.$ Let
$g_1\in\sigma^{\circ}_{\!_H}\langle\overline{x},t_1\rangle$, i.e., $t_1(g_1[\overline{x}])\in H.$ Thus
$t_1(g_1[\overline{x}]),t_1(g[\overline{x}]),t_2(g[\overline{y}])\in H,$ whence, by (\ref{e18}), we obtain $t_2(g_1[\overline{y}])\in H.$ Consequently, $g_1\in\sigma^{\circ}_{\!_H}\langle\overline{y},t_2\rangle$, which proves the inclusion $\sigma^{\circ}_{\!_H}\langle\overline{x},t_1\rangle\subseteq \sigma^{\circ}_{\!_H}\langle\overline{y},t_2\rangle$. The proof of the reverse inclusion is similar. Thus, $\sigma^{\circ}_{\!_H}\langle\overline{x},t_1\rangle=\sigma^{\circ}_{\!_H}\langle\overline{y},t_2\rangle$.
This proves \eqref{e19} and $(iii)$.

$(iii)\longrightarrow (i)$ \ If the premise of \eqref{e17} is satisfied, then there exists $(\overline{x},t)$ such that $(\overline{x},t)\in\sigma_{\!_H}\langle a\rangle\cap\sigma_{\!_H}\langle b\rangle$. Therefore, $t(a[\overline{x}])\in H$ and $t(b[\overline{x}])\in H.$ Consequently,
$a\in\sigma^{\circ}_{\!_H}\langle\overline{x},t\rangle$ and $b\in\sigma^{\circ}_{\!_H}\langle\overline{x},t\rangle$. Let
$(\overline{y},t_1)\in\sigma_{\!_H}\langle a\rangle$. Then $t_1(a[\overline{y}])\in H,$ whence $a\in\sigma^{\circ}_{\!_H}\langle\overline{y},t_1\rangle$. Therefore, $\sigma^{\circ}_{\!_H}\langle\overline{x},t\rangle\cap\sigma^{\circ}_{\!_H}\langle\overline{y},
t_1\rangle\neq\varnothing$, which, by $\eqref{e19}$, gives $\sigma^{\circ}_{\!_H}\langle\overline{x},t\rangle=\sigma^{\circ}_{\!_H}\langle\overline{y}, t_1\rangle$. So, $b\in\sigma^{\circ}_{\!_H}\langle\overline{y},t_1\rangle$, i.e., $t_1(b[\overline{y}])\in H.$ Thus, $(\overline{y},t_1)\in\sigma_{\!_H}\langle b\rangle$ and $\sigma_{\!_H}\langle a\rangle\subseteq \sigma_{\!_H}\langle b\rangle$. Analogously we can prove $\sigma_{\!_H}\langle b\rangle\subseteq\sigma_{\!_H}\langle a\rangle$. Therefore, $\sigma_{\!_H}\langle a\rangle=\sigma_{\!_H}\langle b\rangle$. This proves (\ref{e17}).
\end{proof}

Note that according to Theorem \ref{T4.4} the empty subset of a Menger algebra is bistrong since for
$H=\varnothing$ the premise and the conclusion of (\ref{e18}) are false, so this implication is true.

\begin{proposition}\label{P4.5}
Every nonempty normal bicomplex $H$ of a Menger algebra $(G,o)$ is a $\mathcal{P}_H$-class
different from $W^H.$
\end{proposition}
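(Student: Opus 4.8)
The plan is to mimic the arguments of Propositions \ref{P2.3} and \ref{P3.5}, splitting the proof into two inclusions together with a short observation ruling out the biresidue. Throughout I will exploit that the normal bicomplex condition \eqref{e3}, when $t$ is the identity translation or $\overline{x}=\overline{e}$, degenerates to a statement purely about membership in $H$.

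First I would show that $H$ is contained in a single $\mathcal{P}_H$-class. Take $h_1,h_2\in H$ and suppose $t(h_1[\overline{x}])\in H$ for some $t\in T_n(G)$ and $\overline{x}\in B$. Since $h_1\in H$, $h_2\in H$ and $t(h_1[\overline{x}])\in H$, the normal bicomplex condition \eqref{e3} yields $t(h_2[\overline{x}])\in H$; by symmetry $t(h_2[\overline{x}])\in H$ implies $t(h_1[\overline{x}])\in H$. Hence $t(h_1[\overline{x}])\in H\longleftrightarrow t(h_2[\overline{x}])\in H$ for all $t\in T_n(G)$ and $\overline{x}\in B$, i.e. $\sigma_{\!_H}\langle h_1\rangle=\sigma_{\!_H}\langle h_2\rangle$, so $(h_1,h_2)\in\mathcal{P}_H$. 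Thus all of $H$ lies in one $\mathcal{P}_H$-class $X$, giving $H\subseteq X$.

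Next I would prove the reverse inclusion $X\subseteq H$. Fix some $h\in H$ (possible since $H\neq\varnothing$) and let $g\in X$ be arbitrary. Then $(h,g)\in\mathcal{P}_H$, so $t(h[\overline{x}])\in H\longleftrightarrow t(g[\overline{x}])\in H$ for every $t\in T_n(G)$ and $\overline{x}\in B$. Specializing to the identity translation and $\overline{x}=\overline{e}$, and recalling $g[\overline{e}]=g$, this becomes $h\in H\longleftrightarrow g\in H$; since $h\in H$ we get $g\in H$. Therefore $X\subseteq H$, hence $H=X$ is a $\mathcal{P}_H$-class.

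Finally, $H\neq W^H$: it was already noted in the text that $H\cap W^H=\varnothing$, and $H\neq\varnothing$, so $H$ cannot coincide with $W^H$ (whether or not the latter is a $\mathcal{P}_H$-class). I do not anticipate any genuine obstacle here; the whole argument is routine, and the only point requiring care is the correct use of the neutral data — the identity translation and $\overline{x}=\overline{e}$ — to collapse $\mathcal{P}_H$-equivalence to ordinary membership in $H$, exactly as in the $v$-case (Proposition \ref{P2.3}) and the $l$-case (Proposition \ref{P3.5}).
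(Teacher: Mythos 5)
Your proposal is correct and follows essentially the same route as the paper's proof: first using the normal bicomplex condition \eqref{e3} to place all of $H$ in one $\mathcal{P}_H$-class, then collapsing the equivalence via the identity translation and $\overline{x}=\overline{e}$ to get the reverse inclusion, and finally invoking $H\cap W^H=\varnothing$. No gaps.
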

\begin{proof}
Let $h_1,h_2\in H$ and $t(h_1[\overline{x}])\in H$ for some $t\in T_n(G)$, $\overline{x}\in B.$ Then, by (\ref{e3}), we have $t(h_2[\overline{x}])\in H.$ Similarly, from $t(h_2[\overline{x}])\in
H$ we obtain $t(h_1[\overline{x}])\in H.$ Thus,
\[
(\forall t\in T_n (G)) (\forall\overline{x}\in B) \Big(t(h_1[\overline{x}])\in H\longleftrightarrow
t(h_2[\overline{x}])\in H\Big),
\]
so $(h_1,h_2)\in\mathcal{P}_H.$ Consequently, $H$ is contained in some $\mathcal{P}_H$-class. Denote this class by $X$. So, $H\subseteq X$. Now let $g\in X$ and $h\in H.$ Then, obviously, $(h,g)\in\mathcal{P}_H$, i.e.,
$$
 t(h[\overline{x}])\in H\longleftrightarrow t(g[\overline{x}])\in H
$$
for all $t\in T_n(G)$, $\overline{x}\in B.$ From this, for the identity translation $t$ and $\overline{x}=\overline{e}$, we get $h\in H\longleftrightarrow g\in H.$ So, $g\in H$ and $X\subseteq H,$ whence
$H=X$. Since $H\cap W^H=\varnothing$, the class $X=H$ is different from $W^H.$
\end{proof}

\begin{collolary}\label{C4.6}
Every nonempty bistrong subset $H$ of a Menger algebra $(G,o)$ is an equivalence class of the principal congruence $\mathcal{P}_H$ different from biresidue $W^H.$
\end{collolary}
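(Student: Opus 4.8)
The plan is to reduce the corollary to Proposition~\ref{P4.5} by showing that every nonempty bistrong subset is a normal bicomplex; this is the exact analogue of the one-line passage in Corollary~\ref{C3.6} (and of the corresponding step in Proposition~\ref{P2.9}), where specializing one of the polynomials to the identity translation and one of the tuples to $\overline{e}$ turns the ``strong'' condition into the appropriate ``complex'' condition.

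Concretely, since $H$ is bistrong, condition $(ii)$ of Theorem~\ref{T4.4} tells us that \eqref{e18} holds for all $g_1,g_2\in G$, all $\overline{x},\overline{y}\in B$ and all $t_1,t_2\in T_n(G)$. I would now take $t_1$ to be the identity translation (which lies in $T_n(G)$ by the preliminaries) and $\overline{x}=\overline{e}$. Since $g[\overline{e}]=g$ for every $g\in G$, the first two hypotheses of \eqref{e18} reduce to $g_1\in H$ and $g_2\in H$, so \eqref{e18} collapses to
$$
g_1\in H\wedge g_2\in H\wedge t_2(g_2[\overline{y}])\in H\longrightarrow t_2(g_1[\overline{y}])\in H
$$
for all $g_1,g_2\in G$, $\overline{y}\in B$, $t_2\in T_n(G)$. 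Interchanging the (universally quantified) roles of $g_1$ and $g_2$ and renaming $t_2,\overline{y}$ as $t,\overline{x}$, this is precisely the implication \eqref{e3} defining a normal bicomplex. Hence the nonempty set $H$ is a normal bicomplex.

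Applying Proposition~\ref{P4.5} then yields immediately that $H$ is a $\mathcal{P}_H$-class different from the biresidue $W^H$, which is exactly the assertion of the corollary (that this class is not $W^H$ already rests on $H\cap W^H=\varnothing$, recorded before Proposition~\ref{P4.1}). I do not anticipate any genuine obstacle here: the entire content is the substitution $t_1=$ identity translation, $\overline{x}=\overline{e}$, and the only points to double-check are that the identity map belongs to $T_n(G)$ and that $g[\overline{e}]=g$, both stated in Section~1.
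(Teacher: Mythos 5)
Your proposal is correct and follows exactly the same route as the paper: specialize $t_1$ to the identity translation and $\overline{x}$ to $\overline{e}$ in condition \eqref{e18} to see that $H$ is a normal bicomplex, then invoke Proposition~\ref{P4.5}.
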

\begin{proof}
Every bistrong subset $H\ne\varnothing$ satisfies (\ref{e18}) for any elements $g_1,g_2\in G$ and
$\overline{x},\overline{y}\in B,$ $t_1,t_2\in T_n(G)$. Replacing $t_1$ by the identity translation and $\overline{x}$ by $\overline{e}$ we see that $H$ is a normal bicomplex. Hence, by Proposition \ref{P4.5},
it is a $\mathcal{P}_H$-class different from $W^H.$
\end{proof}

\begin{proposition}\label{P4.7}
Let $H$ be a bistrong subset of a Menger algebra $(G,o)$. Then the $\mathcal{P}_H$-classes are nonempty members of the family
$$
\mathcal{D}=\{\sigma^{\circ}_{_H}\langle\overline{x},t\rangle\,|\,(\overline{x},t)\in B\times T_n(G)\}\cup\{W^H\}.
$$
\end{proposition}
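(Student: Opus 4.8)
The plan is to follow the scheme of Propositions~\ref{P2.10} and~\ref{P3.7} almost verbatim, with $\sigma_{\!_H}$, $\sigma^{\circ}_{\!_H}$ and $W^H$ playing the roles previously played by $\rho_{_H}$, $\rho^{\circ}_{_H}$, $W_H$ (respectively by $\eta_{_H}$, $\eta^{\circ}_{_H}$, $_H\!W$). First I would show that $\mathcal{D}$ covers $G$: given $g\in G$, either $t(g[\overline{x}])\notin H$ for every $(\overline{x},t)\in B\times T_n(G)$, in which case $g\in W^H$, or $t(g[\overline{x}])\in H$ for some such pair, in which case $g\in\sigma^{\circ}_{\!_H}\langle\overline{x},t\rangle$. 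Since also $W^H\subseteq G$ and $\sigma^{\circ}_{\!_H}\langle\overline{x},t\rangle\subseteq G$ for every $(\overline{x},t)$, this gives $G=\bigcup\mathcal{D}$.

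Next I would check that the members of $\mathcal{D}$ are pairwise disjoint. An element $g\in\sigma^{\circ}_{\!_H}\langle\overline{x},t\rangle\cap W^H$ would satisfy $t(g[\overline{x}])\in H$ and $t(g[\overline{x}])\notin H$ at once, which is impossible; hence $\sigma^{\circ}_{\!_H}\langle\overline{x},t\rangle\cap W^H=\varnothing$. If $\sigma^{\circ}_{\!_H}\langle\overline{x},t_1\rangle\cap\sigma^{\circ}_{\!_H}\langle\overline{y},t_2\rangle\neq\varnothing$, then bistrongness of $H$ in the form of Theorem~\ref{T4.4}$(iii)$, i.e.\ \eqref{e19}, forces $\sigma^{\circ}_{\!_H}\langle\overline{x},t_1\rangle=\sigma^{\circ}_{\!_H}\langle\overline{y},t_2\rangle$; contrapositively, two distinct sets of this form are disjoint. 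Together with the remark made before Proposition~\ref{P4.1} that $W^H$ is a $\mathcal{P}_H$-class whenever it is nonempty, this shows $\mathcal{D}$ is a partition of $G$.

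Finally I would identify each member of $\mathcal{D}$ as a full $\mathcal{P}_H$-class. Suppose $(g_1,g_2)\in\mathcal{P}_H$, i.e.\ $\sigma_{\!_H}\langle g_1\rangle=\sigma_{\!_H}\langle g_2\rangle$; if this common set is empty, then $g_1,g_2\in W^H$, and otherwise it contains some $(\overline{x},t)$, so $t(g_1[\overline{x}])\in H$ and $t(g_2[\overline{x}])\in H$, whence $g_1,g_2\in\sigma^{\circ}_{\!_H}\langle\overline{x},t\rangle$. Conversely, if $g_1,g_2\in\sigma^{\circ}_{\!_H}\langle\overline{x},t\rangle$, then $(\overline{x},t)\in\sigma_{\!_H}\langle g_1\rangle\cap\sigma_{\!_H}\langle g_2\rangle$, so bistrongness, i.e.\ \eqref{e17}, gives $\sigma_{\!_H}\langle g_1\rangle=\sigma_{\!_H}\langle g_2\rangle$, that is $(g_1,g_2)\in\mathcal{P}_H$. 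Hence every nonempty $\sigma^{\circ}_{\!_H}\langle\overline{x},t\rangle$ is a $\mathcal{P}_H$-class. Since $\mathcal{D}$ is a partition of $G$ all of whose members are $\mathcal{P}_H$-classes (or empty, in the case $W^H=\varnothing$), it must coincide with the set of $\mathcal{P}_H$-classes, so the $\mathcal{P}_H$-classes are precisely the nonempty members of $\mathcal{D}$.

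I do not anticipate a genuine obstacle: the argument is a routine adaptation of the two earlier propositions, and the only step that deserves a word of care is the last one — inferring from ``$\mathcal{D}$ partitions $G$ and each of its members is a $\mathcal{P}_H$-class'' that \emph{every} $\mathcal{P}_H$-class occurs in $\mathcal{D}$ — which is just the standard observation that if one partition refines another and each of its cells is already a cell of the other, the two partitions coincide.
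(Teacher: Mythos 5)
Your proposal is correct and follows essentially the same route as the paper's proof: establish that $\mathcal{D}$ is a partition of $G$ (covering, disjointness of $\sigma^{\circ}_{\!_H}\langle\overline{x},t\rangle$ from $W^H$, and \eqref{e19} for pairwise disjointness of the $\sigma^{\circ}$-sets), then show each nonempty $\sigma^{\circ}_{\!_H}\langle\overline{x},t\rangle$ and, when nonempty, $W^H$ is exactly a $\mathcal{P}_H$-class via \eqref{e17}. Your closing remark making the partition-coincidence step explicit is a minor amplification of what the paper leaves implicit, not a different argument.
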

\begin{proof}
First, make sure that the family $\mathcal{D}$ is a partition of the set $G.$ In fact, $W^H\subseteq G$ and
$\sigma^{\circ}_{_H}\langle\overline{x},t\rangle\subseteq G$ for all $t\in T_n(G)$ and $\overline{x}\in B.$ Thus, $\bigcup\mathcal{D}\subseteq G.$ Conversely, for an arbitrary element $g\in G$ we have two possibilities: either $(\forall t\in T_n(G))(\forall\overline{x}\in B)\,t(g[\overline{x}])\not\in H,$ or $t(g[\overline{x}])\in H$ for some $t\in T_n(G)$ and $\overline{x}\in B$. In the first case $g\in W^H$, in the second $g\in\sigma^{\circ}_{_H}\langle\overline{x},t\rangle$ for some $t\in T_n(G)$ and $\overline{x}\in B.$
Therefore, $g\in\bigcup\mathcal{D}$, so $G\subseteq\bigcup\mathcal{D}$. Thus,
$G=\bigcup\mathcal{D}$, i.e., $\mathcal{D}$ covers $G.$

Next, we show that the members of $\mathcal{D}$ are pairwise disjoint. Indeed, if there is $g\in G$ such that $g\in\sigma^{\circ}_{_H}\langle\overline{x}_1,t_1\rangle\cap W^H,$ where $t_1\in T_n(G)$, $\overline{x}_1\in
B,$ then $t_1(g[\overline{x}_1])\in H$ and $t(g[\overline{x}])\not\in H$ for all $t\in T_n(G)$, $\overline{x}\in B$, so $t_1(g[\overline{x}_1])\in H$ and $t_1(g[\overline{x}_1])\not\in H$ at the same time, which is impossible. Therefore, $\sigma^{\circ}_{_H}\langle\overline{x}_1,t_1\rangle\cap W^H =\varnothing$ for all $t_1\in T_n(G)$, $\overline{x}_1\in B.$ If  $\sigma^{\circ}_{_H}\langle\overline{x}_1,t_1\rangle\cap\sigma^{\circ}_{_H}\langle\overline{x}_2,t_2\rangle \neq\varnothing$, then, according to (\ref{e19}), $\sigma^{\circ}_{_H}\langle\overline{x}_1, t_1\rangle=\sigma^{\circ}_{_H}\langle\overline{x}_2,t_2\rangle$. So, if
$\sigma^{\circ}_{_H}\langle\overline{x}_1,t_1\rangle\neq\sigma^{\circ}_{_H}\langle\overline{x}_2,t_2\rangle$, then $\sigma^{\circ}_{_H}\langle\overline{x}_1,t_1\rangle\cap\sigma^{\circ}_{_H}\langle\overline{x}_2, t_2\rangle=\varnothing$. Hence, $\mathcal{D}$ is a partition of $G.$

Now let $(a,b)\in\mathcal{P}_H$, i.e., $\sigma_{\!_H}\langle a\rangle=\sigma_{\!_H}\langle b\rangle$. If
$\sigma_{\!_H}\langle a\rangle=\varnothing$, then $a,b\in W^H.$ If
$\sigma_{\!_H}\langle a\rangle\neq\varnothing$, then there exist $t\in T_n(G)$ and $\overline{x}\in B$ such that $(\overline{x},t)\in\sigma_{\!_H}\langle a\rangle=\sigma_{\!_H}\langle b\rangle$, so $t(a[\overline{x}])\in H$ and $t(b[\overline{x}])\in H.$ Hence, $a,b\in\sigma^{\circ}_{_H}\langle
\overline{x},t\rangle$. This shows that any $a,b\in G$ such that $(a,b)\in\mathcal{P}_H$ belong to the same equivalence class of the relation $\mathcal{P}_H$.

Conversely, if $a,b\in\sigma^{\circ}_{_H}\langle\overline{x},t\rangle$ for some $t\in T_n(G)$ and $\overline{x}\in B,$ then $t(a[\overline{x}])\in H$ and $t(b[\overline{x}])\in H,$ so
$(\overline{x},t)\in\sigma_{\!_H}\langle a\rangle\cap\sigma_{\!_H}\langle b\rangle$. Consequently,
$\sigma_{\!_H}\langle a\rangle\cap\sigma_{\!_H}\langle b\rangle\neq\varnothing$, so $\sigma_{\!_H}\langle
a\rangle=\sigma_{\!_H}\langle b\rangle$ because $H$ is bistrong. Thus, $(a,b)\in \mathcal{P}_H$. Hence $\sigma^{\circ}_{_H}\langle\overline{x},t\rangle$ is an equivalence class of the relation $\mathcal{P}_H$.
\end{proof}

To any polynomial $t\in T_n(G)$ and any vector $\overline{a}\in B$ we associate the polynomial
$t^{\overline{a}}\in T_n (G)$ obtained from $t$ by assigning $[\overline{a}]$ to all elements standing in the square brackets except for the variable $x$ and the elements standing directly
before the left square bracket. For example, to $t(x)=u[w_1\ldots w_{i-1}x\,w_{i+1}\ldots w_n]$ we associate the polynomial $t^{\overline{a}}(x)=u[w_1[\overline{a}]\ldots w_{i-1}[\overline{a}]\,x\,w_{i+1}[\overline{a}]\ldots w_n[\overline{a}]]$, to $t_1(x)=u[\overline{w}|_i\,v[\overline{s}|_jx]]$ we associate
$t^{\overline{a}}_1(x)=u[\overline{w}*\overline{a}|_i\,v[\overline{s}*\overline{a}|_jx]]$.

Using \eqref{e1} it is easy to show that
$$
t^{\overline{a}}(g[\overline{a}])=t(g)[\overline{a}]
$$
for all $g\in G,$ $\overline{a}\in B,$ $t\in T_n(G)$.

\begin{theorem}\label{T4.8}
Let $H$ be a nonempty bistrong subset of a Menger algebra $(G,o)$.
Then each $\mathcal{P}_H$-class $X$ different from $W^H$ is
bistrong, $W^H \subseteq W^X,$
$\mathcal{P}_H\subseteq\mathcal{P}_X$. Furthermore, the
restrictions of $\mathcal{P}_H$ and $\mathcal{P}_X$ on
$G\!\setminus W^X$ coincide.
\end{theorem}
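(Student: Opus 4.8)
The plan is to mimic the proofs of Theorems \ref{T2.11} and \ref{T3.8}; the only genuinely new ingredient is a single change-of-variables identity that translates every statement about $\sigma_{\!_X}$ into one about $\sigma_{\!_H}$. Since $X\neq W^H$ is a $\mathcal{P}_H$-class, Proposition \ref{P4.7} supplies $\overline{y}\in B$ and $t\in T_n(G)$ with $X=\sigma^{\circ}_{\!_H}\langle\overline{y},t\rangle$, i.e.\ $c\in X\Longleftrightarrow t(c[\overline{y}])\in H$ for every $c\in G$; fix this pair $(\overline{y},t)$.

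The key observation is that for all $a\in G$, $\overline{x}\in B$ and $s\in T_n(G)$,
\[
(\overline{x},s)\in\sigma_{\!_X}\langle a\rangle\ \Longleftrightarrow\ \big(\overline{x}*\overline{y},\ t\circ s^{\overline{y}}\big)\in\sigma_{\!_H}\langle a\rangle .
\]
Indeed, $(\overline{x},s)\in\sigma_{\!_X}\langle a\rangle$ means $s(a[\overline{x}])\in X$, that is $t\big(s(a[\overline{x}])[\overline{y}]\big)\in H$. Applying the identity $s^{\overline{y}}(g[\overline{y}])=s(g)[\overline{y}]$ with $g=a[\overline{x}]$ and rewriting $(a[\overline{x}])[\overline{y}]=a[\overline{x}*\overline{y}]$ through \eqref{e1} and the semigroup $(B,*)$, this becomes $(t\circ s^{\overline{y}})(a[\overline{x}*\overline{y}])\in H$; since $t\circ s^{\overline{y}}\in T_n(G)$ and $\overline{x}*\overline{y}\in B$, this is exactly $(\overline{x}*\overline{y},t\circ s^{\overline{y}})\in\sigma_{\!_H}\langle a\rangle$.

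With this biconditional, the first two claims come out as in Theorem \ref{T2.11}. If $\sigma_{\!_X}\langle a\rangle\cap\sigma_{\!_X}\langle b\rangle\neq\varnothing$, a pair $(\overline{x},s)$ in the intersection produces $(\overline{x}*\overline{y},t\circ s^{\overline{y}})$ in $\sigma_{\!_H}\langle a\rangle\cap\sigma_{\!_H}\langle b\rangle$, so $\sigma_{\!_H}\langle a\rangle=\sigma_{\!_H}\langle b\rangle$ since $H$ is bistrong; pushing an arbitrary $(\overline{z},r)\in\sigma_{\!_X}\langle a\rangle$ through the biconditional, using this equality, and coming back, gives $(\overline{z},r)\in\sigma_{\!_X}\langle b\rangle$, whence $\sigma_{\!_X}\langle a\rangle=\sigma_{\!_X}\langle b\rangle$ by symmetry, i.e.\ $X$ is bistrong. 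Likewise, $(a,b)\in\mathcal{P}_H$ means $\sigma_{\!_H}\langle a\rangle=\sigma_{\!_H}\langle b\rangle$, and the biconditional then yields $\sigma_{\!_X}\langle a\rangle=\sigma_{\!_X}\langle b\rangle$; hence $\mathcal{P}_H\subseteq\mathcal{P}_X$, so in particular $W^H$ lies inside a single $\mathcal{P}_X$-class.

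It remains to identify that class and compare the restrictions. If $a\in W^H$, i.e.\ $\sigma_{\!_H}\langle a\rangle=\varnothing$, then $\sigma_{\!_X}\langle a\rangle=\varnothing$ as well, since any $(\overline{x},s)\in\sigma_{\!_X}\langle a\rangle$ would give $(\overline{x}*\overline{y},t\circ s^{\overline{y}})\in\sigma_{\!_H}\langle a\rangle$; thus $a\in W^X$ and $W^H\subseteq W^X$. As $\mathcal{P}_H\subseteq\mathcal{P}_X$, the set $W^X$ is a union of $\mathcal{P}_H$-classes, so the two relations may be compared on $G\setminus W^X$: the inclusion $\mathcal{P}_H\subseteq\mathcal{P}_X$ gives one half, while for $a,b\in G\setminus W^X$ with $(a,b)\in\mathcal{P}_X$ we have $\sigma_{\!_X}\langle a\rangle=\sigma_{\!_X}\langle b\rangle\neq\varnothing$, so a pair $(\overline{x},s)$ in it yields $(\overline{x}*\overline{y},t\circ s^{\overline{y}})\in\sigma_{\!_H}\langle a\rangle\cap\sigma_{\!_H}\langle b\rangle$, and bistrongness of $H$ forces $\sigma_{\!_H}\langle a\rangle=\sigma_{\!_H}\langle b\rangle$, i.e.\ $(a,b)\in\mathcal{P}_H$. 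The one step that needs real care is the change-of-variables identity above — the bookkeeping of the operator $s\mapsto s^{\overline{y}}$ and the invocation of \eqref{e1}; once it is in place, every remaining step merely transcribes the arguments already used for $\mathcal{R}_H$ and $\mathcal{L}_H$.
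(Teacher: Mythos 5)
Your proof is correct and follows essentially the same route as the paper: identify $X=\sigma^{\circ}_{\!_H}\langle\overline{y},t\rangle$ via Proposition \ref{P4.7} and use the identity $s^{\overline{y}}(g[\overline{y}])=s(g)[\overline{y}]$ to transfer membership in $\sigma_{\!_X}\langle a\rangle$ to membership in $\sigma_{\!_H}\langle a\rangle$. Your only deviation is organizational — stating this transfer once as a biconditional lemma instead of redoing the computation in each part — which is a harmless streamlining of the paper's argument.
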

\begin{proof}
Let $\sigma_{\!_X}\langle a\rangle\cap\sigma_{\!_X}\langle b\rangle\neq\varnothing$, then
$(\overline{x},t)\in\sigma_{\!_X}\langle a\rangle\cap\sigma_{\!_X}\langle b\rangle$ for some $t\in T_n(G)$ and $\overline{x}\in B.$ Therefore, $t(a[\overline{x}])\in X$ and
$t(b[\overline{x}])\in X.$ Since the $\mathcal{P}_H$-class $X$ is different from
$W^H$, by Proposition \ref{P4.7}, we have $X=\sigma^{\circ}_{_H}\langle\overline{y},t_1\rangle$ for some
$t_1\in T_n(G)$, $\overline{y}\in B.$ Consequently, $t(a[\overline{x}])\in\sigma^{\circ}_{_H}\langle\overline{y},t_1\rangle$ and
$t(b[\overline{x}])\in\sigma^{\circ}_{_H}\langle\overline{y},t_1\rangle$, whence $t_1(t(a[\overline{x}])[\overline{y}])\in H$ and
$t_1(t(b[\overline{x}])[\overline{y}])\in H.$ Thus,
$t_1(t^{\overline{y}}(a[\overline{x}][\overline{y}]))\in H$ and
$t_1(t^{\overline{y}}(b[\overline{x}][\overline{y}]))\in H,$ i.e.,
$t_1(t^{\overline{y}}(a[\overline{x}*\overline{y}]))\in H$ and
$t_1(t^{\overline{y}}(b[\overline{x}*\overline{y}]))\in H.$ This means that
$(t_1\circ t^{\overline{y}})(a[\overline{x}*\overline{y}])\in H$ and $(t_1\circ
t^{\overline{y}})(b[\overline{x}*\overline{y}])\in H,$ so
$(\overline{x}*\overline{y},t_1\circ t^{\overline{y}})\in\sigma_{\!_H}\langle
a\rangle$ and $(\overline{x}*\overline{y},t_1\circ t^{\overline{y}})\in\sigma_{\!_H}\langle b\rangle$. Thus,
$\sigma_{\!_H}\langle a\rangle\cap\sigma_{\!_H}\langle b\rangle\neq\varnothing$. Since $H$ is bistrong, the last implies $\sigma_{\!_H}\langle a\rangle=\sigma_{\!_H}\langle b\rangle$.
Now let $(\overline{z},t_2)\in\sigma_{\!_X}\langle a\rangle$, then $t_2(a[\overline{z}])\in X= \sigma^{\circ}_{_H}\langle\overline{y},t_1\rangle$, which gives
$t_1(t_2(a[\overline{z}])[\overline{y}])\in H.$ Consequently, $(t_1\circ t_2^{\overline{y}})(a[\overline{z} *\overline{y}])\in H,$ whence we obtain
$(\overline{z}*\overline{y},t_1\circ t_2^{\overline{y}})\in\sigma_{\!_H}\langle a\rangle=\sigma_{\!_H}\langle  b\rangle$. So, $(t_1\circ t_2^{\overline{y}})(b[\overline{z}*\overline{y}])\in H,$ i.e,
$t_1(t_2(b[\overline{z}])[\overline{y}])\in H.$ Hence, $t_2(b[\overline{z}])\in\sigma^{\circ}_{_H}\langle\overline{y},t_1\rangle=X,$ which means that $(\overline{z},t_2)\in\sigma_{\!_X}\langle b\rangle$. In this way we have proved the inclusion $\sigma_{\!_X}\langle a\rangle\subseteq\sigma_{\!_X}\langle b\rangle$. The proof of $\sigma_{\!_X}\langle
b\rangle\subseteq\sigma_{\!_X}\langle a\rangle$ is similar. Therefore, $\sigma_{\!_X}\langle a\rangle= \sigma_{\!_X}\langle b\rangle$. This proves that $X$ is a bistrong subset of $(G,o)$.

Let $(a,b)\in\mathcal{P}_H$, i.e., $\sigma_{\!_H}\langle a\rangle=\sigma_{\!_H}\langle b\rangle$. If
$(\overline{v},t_3)\in\sigma_{\!_X}\langle a\rangle$ for some $t_3\in T_n(G)$ and $\overline{v}\in B,$ then $t_3(a[\overline{v}])\in X.$ So, $t_3(a[\overline{v}])\in\sigma^{\circ}_{_H}\langle\overline{y},t_1\rangle$,
whence we obtain $t_1(t_3(a[\overline{v}])[\overline{y}])\in H.$ Thus, $t_1(t_3^{\overline{y}}(a[\overline{v}] [\overline{y}]))\in H,$ which gives $(t_1\circ t_3^{\overline{y}})(a[\overline{v}*\overline{y}])\in H.$ So, $(\overline{v}*\overline{y},t_1\circ t_3^{\overline{y}})\in\sigma_{\!_H}\langle a\rangle=\sigma_{\!_H}\langle  b\rangle$. Consequently, $(\overline{v},t_3)\in\sigma_{\!_X}\langle b\rangle$ and
$\sigma_{\!_X}\langle a\rangle\subseteq\sigma_{\!_X}\langle b\rangle$. Analogously we show the reverse inclusion. Therefore, $\sigma_{\!_X}\langle a\rangle=\sigma_{\!_X}\langle b\rangle$ and $(a,b)\in\mathcal{P}_X$. Hence $\mathcal{P}_H\subseteq\mathcal{P}_X$.

Thus, $W^H$ is contained in some $\mathcal{P}_X$-class. Let $a\in W^H,$ i.e.,
$\sigma_{\!_H}\langle a\rangle=\varnothing$. Suppose that
$\sigma_{\!_X}\langle a\rangle\neq\varnothing$. Then $t_4(a[\overline{w}])\in X$ for some $t_4\in T_n(G)$ and $\overline{w}\in B.$ Since $X=\sigma^{\circ}_{_H}\langle\overline{y},t_1\rangle$, the above gives
$t_4(a[\overline{w}])\in\sigma^{\circ}_{_H}\langle\overline{y},t_1\rangle$, so $t_1(t_4(a[\overline{w}]) [\overline{y}])\in H,$ therefore $(\overline{w}*\overline{y},t_1\circ t_4^{\overline{y}})\in\sigma_{\!_H}\langle a\rangle$, whence $\sigma_{\!_H}\langle a\rangle\neq\varnothing$, i.e., $a\not\in W^H,$ which is impossible, since $a\in W^H.$ Thus, our supposition is wrong. Therefore $\sigma_{\!_X}\langle a\rangle=\varnothing$, i.e., $a\in W^X.$ Hence $W^H\subseteq W^X.$

Since $\mathcal{P}_H \subseteq\mathcal{P}_X$, the set $W^X$ is a
union of $\mathcal{P}_H$-classes. We show that the restrictions of
$\mathcal{P}_H$ and $\mathcal{P}_X$ on $G\!\setminus W^X$
coincide. For this enough to show that for any $a,b\in
G\!\setminus W^X$ such that $(a,b)\in\mathcal{P}_X$ we have
$(a,b)\in\mathcal{P}_H$. In fact, if $a,b\in G\!\setminus W^X$ and
$(a,b)\in\mathcal{P}_X$, then $\sigma_{\!_X}\langle
a\rangle=\sigma_{\!_X}\langle b\rangle\neq\varnothing$. Thus,
there are $t_5\in T_n(G)$ and $\overline{u}\in B$ such that
$t_5(a[\overline{u}])\in
X=\sigma^{\circ}_{_H}\langle\overline{y},t_1\rangle$ and
$t_5(b[\overline{u}])\in
X=\sigma^{\circ}_{_H}\langle\overline{y},t_1\rangle$. Hence
$t_1(t_5(a[\overline{u}]) [\overline{y}])\in H$ and
$t_1(t_5(b[\overline{u}])[\overline{y}])\in H,$ so,
$(\overline{u}*\overline{y}, t_1\circ
t_5^{\overline{y}})\in\sigma_{\!_H}\langle a\rangle$ and
$(\overline{u}*\overline{y},t_1\circ
t_5^{\overline{y}})\in\sigma_{\!_H}\langle b\rangle$.
Consequently, $\sigma_{\!_H}\langle
a\rangle\cap\sigma_{\!_H}\langle b\rangle\neq\varnothing$, which
implies $\sigma_{\!_H}\langle a\rangle=\sigma_{\!_H}\langle
b\rangle$. Therefore, $(a,b)\in\mathcal{P}_H$. This completes our
proof.
\end{proof}

\begin{collolary}\label{C4.9}
Let $H$ be a bistrong subset of a Menger algebra $(G,o)$. Then for any translation $t\in T_n(G)$ and every
$\overline{x}\in B$ the set $\sigma^{\circ}_{\!_H}\langle\overline{x},t\rangle$ is bistrong.
\end{collolary}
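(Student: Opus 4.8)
The plan is to imitate, almost verbatim, the arguments used for Corollary~\ref{C2.12} and Corollary~\ref{C3.9}: split into the degenerate (empty) case and the generic (nonempty) case, and in the nonempty case recognize $\sigma^{\circ}_{\!_H}\langle\overline{x},t\rangle$ as a $\mathcal{P}_H$-class different from the biresidue $W^H$, after which Theorem~\ref{T4.8} does all the work.

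First I would dispose of the degenerate case. It was noted just after Theorem~\ref{T4.4} that the empty subset of a Menger algebra is bistrong, since for $H=\varnothing$ both the premise and the conclusion of \eqref{e18} are false. Hence if $\sigma^{\circ}_{\!_H}\langle\overline{x},t\rangle=\varnothing$ for the given translation $t\in T_n(G)$ and vector $\overline{x}\in B$, the set is bistrong and there is nothing to prove.

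Next, assume $\sigma^{\circ}_{\!_H}\langle\overline{x},t\rangle\neq\varnothing$. By Proposition~\ref{P4.7} all $\mathcal{P}_H$-classes are exactly the nonempty members of the family $\mathcal{D}=\{\sigma^{\circ}_{\!_H}\langle\overline{y},s\rangle\,|\,(\overline{y},s)\in B\times T_n(G)\}\cup\{W^H\}$; in particular $\sigma^{\circ}_{\!_H}\langle\overline{x},t\rangle$ is itself a $\mathcal{P}_H$-class. It is moreover distinct from $W^H$: the proof of Proposition~\ref{P4.7} shows $\sigma^{\circ}_{\!_H}\langle\overline{y},s\rangle\cap W^H=\varnothing$ for every $(\overline{y},s)$, so a nonempty $\sigma^{\circ}_{\!_H}\langle\overline{x},t\rangle$ cannot coincide with $W^H$. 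Applying Theorem~\ref{T4.8} with $X=\sigma^{\circ}_{\!_H}\langle\overline{x},t\rangle$ then yields that $X$ is bistrong, which is precisely the claim.

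I do not anticipate any real obstacle here: the corollary is a one-line consequence of Proposition~\ref{P4.7} and Theorem~\ref{T4.8}, just as Corollary~\ref{C2.12} follows from Proposition~\ref{P2.10} and Theorem~\ref{T2.11}. The only step demanding a moment's attention is the verification that the nonempty set $\sigma^{\circ}_{\!_H}\langle\overline{x},t\rangle$ is different from $W^H$, and that is immediate from the disjointness of the members of $\mathcal{D}$ already established inside Proposition~\ref{P4.7}.
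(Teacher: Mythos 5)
Your proposal is correct and matches the paper's own proof essentially word for word: the empty case is handled by the remark that $\varnothing$ is bistrong, and the nonempty case identifies $\sigma^{\circ}_{\!_H}\langle\overline{x},t\rangle$ as a $\mathcal{P}_H$-class different from $W^H$ via Proposition~\ref{P4.7} and then invokes Theorem~\ref{T4.8}. Your extra remark justifying the distinctness from $W^H$ by the disjointness shown inside Proposition~\ref{P4.7} is a harmless (and correct) elaboration of the same argument.
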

\begin{proof}
It was noted above that the empty subset is bistrong, so if $\sigma^{\circ}_{\!_H}\langle\overline{x},t\rangle=\varnothing$ for some $t\in T_n(G)$ and $\overline{x}\in B,$ then, obviously, $\sigma^{\circ}_{\!_H}\langle\overline{x},t\rangle$ is bistrong. If $\sigma^{\circ}_{\!_H}\langle\overline{x},t\rangle\neq\varnothing$, then, by Proposition \ref{P4.7}, $\sigma^{\circ}_{\!_H}\langle\overline{x},t\rangle$ is a $\mathcal{P}_H$-class different from
$W^H.$ Therefore, by Theorem \ref{T4.8}, $\sigma^{\circ}_{\!_H}\langle\overline{x},t\rangle$ is bistrong.
\end{proof}

\begin{proposition}\label{P4.10}
Each $\varepsilon$-class $X$ of an $lv$-cancellative congruence $\varepsilon$ on a Menger algebra $(G,o)$ is  bistrong and $\varepsilon\subseteq\mathcal{P}_X$. Moreover, the relations $\varepsilon$ and $\mathcal{P}_X$ coincide on $G\!\setminus W^X.$
\end{proposition}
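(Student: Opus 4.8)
The plan is to mimic the proofs of Propositions \ref{P2.16}, \ref{P3.10} and the structure of Theorem \ref{T4.8}, replacing the single-sided cancellations with the combined $lv$-cancellation. The statement has three parts: (a) $X$ is bistrong, (b) $\varepsilon\subseteq\mathcal{P}_X$, (c) $\varepsilon$ and $\mathcal{P}_X$ agree off $W^X$. I would prove (b) first (it is cheapest and is used for (a) and (c)), then (a), then (c).

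For (b), suppose $(g_1,g_2)\in\varepsilon$. I want $(g_1,g_2)\in\mathcal{P}_X$, i.e.\ $t(g_1[\overline{x}])\in X\longleftrightarrow t(g_2[\overline{x}])\in X$ for all $t\in T_n(G)$, $\overline{x}\in B$. Since $\varepsilon$ is a congruence, it is $l$-regular and $v$-regular, so $(g_1,g_2)\in\varepsilon$ gives $(g_1[\overline{x}],g_2[\overline{x}])\in\varepsilon$ and then $(t(g_1[\overline{x}]),t(g_2[\overline{x}]))\in\varepsilon$ (using that every translation is built from $i$-compositions, exactly as in the remark after \eqref{e4}). As $X$ is an $\varepsilon$-class, $t(g_1[\overline{x}])\in X$ forces $t(g_2[\overline{x}])\in X$, and symmetrically; this is precisely $(g_1,g_2)\in\mathcal{P}_X$. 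Hence $\varepsilon\subseteq\mathcal{P}_X$.

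For (a), I would verify condition (\ref{e18}) of Theorem \ref{T4.4}. Assume $t_1(g_1[\overline{x}]),\,t_1(g_2[\overline{x}]),\,t_2(g_2[\overline{y}])\in X$ for some $g_1,g_2\in G$, $\overline{x},\overline{y}\in B$, $t_1,t_2\in T_n(G)$; I must show $t_2(g_1[\overline{y}])\in X$. From $t_1(g_1[\overline{x}])$ and $t_1(g_2[\overline{x}])$ lying in the single $\varepsilon$-class $X$ we get $(t_1(g_1[\overline{x}]),t_1(g_2[\overline{x}]))\in\varepsilon$; now strip $t_1$ by $v$-cancellativity (via \eqref{e4}) to obtain $(g_1[\overline{x}],g_2[\overline{x}])\in\varepsilon$, and then strip $[\overline{x}]$ by $l$-cancellativity to obtain $(g_1,g_2)\in\varepsilon$. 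This last step needs a word of care when $\overline{x}=\overline{e}$, but there it is trivial since $g_i[\overline{e}]=g_i$; for $\overline{x}\in G^n$ it is exactly $l$-cancellativity. Then $l$-regularity plus $v$-regularity applied to $(g_1,g_2)\in\varepsilon$ give $(t_2(g_1[\overline{y}]),t_2(g_2[\overline{y}]))\in\varepsilon$, and since $t_2(g_2[\overline{y}])\in X$ we conclude $t_2(g_1[\overline{y}])\in X$. This proves (\ref{e18}), hence $X$ is bistrong.

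For (c): by (b) we already have $\varepsilon\cap(G\!\setminus\!W^X)^2\subseteq\mathcal{P}_X\cap(G\!\setminus\!W^X)^2$, so only the reverse inclusion is at issue. Let $a,b\in G\!\setminus\!W^X$ with $(a,b)\in\mathcal{P}_X$, so $\sigma_{\!_X}\langle a\rangle=\sigma_{\!_X}\langle b\rangle\neq\varnothing$ (nonemptiness because $a\notin W^X$). Pick $(\overline{x},t)$ in this common set; then $t(a[\overline{x}])$ and $t(b[\overline{x}])$ both lie in $X$, so $(t(a[\overline{x}]),t(b[\overline{x}]))\in\varepsilon$, and peeling off $t$ by $v$-cancellativity and $[\overline{x}]$ by $l$-cancellativity (the same two steps as in (a)) yields $(a,b)\in\varepsilon$. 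Combining the two inclusions gives the claimed coincidence on $G\!\setminus\!W^X$. The main obstacle I anticipate is purely bookkeeping: making the ``peel off $t$, then peel off $\overline{x}$'' argument airtight, in particular handling the $\overline{x}=\overline{e}$ case separately and being explicit that $v$-cancellativity is equivalent to \eqref{e4} for arbitrary translations so that $t_1$, $t_2$, $t$ — which are general polynomials, not just single $i$-compositions — can be removed in one move.
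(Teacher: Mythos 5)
Your proposal is correct and follows essentially the same route as the paper: the same ``peel off $t$ by $v$-cancellativity (via \eqref{e4}), then peel off $[\overline{x}]$ by $l$-cancellativity'' steps, combined with regularity and the inclusion $\varepsilon\subseteq\mathcal{P}_X$, and an identical argument for the coincidence on $G\!\setminus\!W^X$. The only cosmetic differences are that you verify bistrongness through condition \eqref{e18} of Theorem \ref{T4.4} while the paper checks the definition \eqref{e17} directly (citing Proposition \ref{P4.2} for $\varepsilon\subseteq\mathcal{P}_X$ instead of reproving it), and that you spell out the trivial case $\overline{x}=\overline{e}$, which the paper leaves implicit.
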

\begin{proof}
Let $\sigma_{\!_X}\langle a\rangle\cap\sigma_{\!_X}\langle b\rangle\neq\varnothing$, then
$(\overline{x},t)\in\sigma_{\!_X}\langle a\rangle$ and $(\overline{x},t)\in\sigma_{\!_X}\langle b\rangle$ for some $t\in T_n(G)$ and $\overline{x}\in B.$ Thus, $t(a[\overline{x}])\in X$ and $t(b[\overline{x}])\in X$, whence $(t(a[\overline{x}]),t(b[\overline{x}]))\in\varepsilon$. Since $\varepsilon$ is a $v$-cancellative congruence, the last implies $(a[\overline{x}],b[\overline{x}])\in\varepsilon$, which, by $l$-cancellativity, gives $(a,b)\in\varepsilon$. But $\varepsilon\subseteq\mathcal{P}_X$ (Proposition \ref{P4.2}), so, $(a,b)\in\mathcal{P}_X$, i.e., $\sigma_{\!_X}\langle a\rangle=\sigma_{\!_X}\langle b\rangle$. This shows that $X$ is a bistrong subset of $(G,o)$.

Moreover, from $\varepsilon\subseteq\mathcal{P}_X$ we obtain
$$
\varepsilon\cap(G\!\setminus\!W^X)\times(G\!\setminus\! W^X)\subseteq\mathcal{P}_X\cap(G\!\setminus\! W^X)\times (G\!\setminus\!W^X).
$$
On the other hand, if $(a,b)\in\mathcal{P}_X\cap(G\!\setminus\!W^X)\times (G\!\setminus\!W^X)$, then $a,b\in G\!\setminus\!W^X$ and
$$
t(a[\overline{x}])\in X\longleftrightarrow t(b[\overline{x}])\in X
$$
for all $t\in T_n(G)$, $\overline{x}\in B.$ Since for any $a\in G\!\setminus\!W^X$
there are $t_1\in T_n(G)$ and $\overline{y}\in B$ such that $t_1(a[\overline{y}])\in X,$ we have
$t_1(b[\overline{y}])\in X.$ Therefore, $(t_1(a[\overline{y}]),t_1(b[\overline{y}]))\in\varepsilon$,
whence $(a,b)\in\varepsilon$ because the relation $\varepsilon $ is $lv$-cancellative. So,
$$
\mathcal{P}_X\cap(G\!\setminus\!W^X)\times (G\!\setminus\!W^X)
\subseteq\varepsilon\cap(G\!\setminus\!W^X)\times(G\!\setminus\!W^X).
$$
From these two inclusions we obtain
$$
\varepsilon\cap(G\!\setminus~\!W^X)\times(G\!\setminus\!W^X)=\mathcal{P}_X\cap(G\!\setminus\!W^X)\times (G\!\setminus\!W^X).
$$
Thus, the relations $\varepsilon$ and $\mathcal{P}_X$ coincide on the set $G\!\setminus\!W^X$.
\end{proof}

\begin{minipage}{70mm}
\begin{flushleft}
Dudek~W. A. \\
 Institute of Mathematics and Computer Science \\
 Wroclaw University of Technology \\
 50-370 Wroclaw \\
 Poland \\
 Email: Wieslaw.Dudek@im.pwr.wroc.pl
\end{flushleft}
\end{minipage}
\hfill
\begin{minipage} {60mm}
\begin{flushleft}
 Trokhimenko~V. S. \\
 Department of Mathematics \\
 Pedagogical University \\
 21100 Vinnitsa \\
 Ukraine \\
 Email: vtrokhim@gmail.com
 \end{flushleft}
 \end{minipage}


\begin{thebibliography}{9}

\bibitem{Clif}  Clifford A. H., Preston G. B., {\it The algebraic theory of
semigroups}, Amer. Math. Soc., Providence, R. I., vol. 1, 1964;
vol. 2, 1967.

\bibitem{Dubreil}  Dubreil P., \textit{Contribution \'a la th\'eorie des demi-groupes},
M\'em. Acad. Sci. Inst. France 63, no.3 (1941), 1--52.

\bibitem{Dudtro3} Dudek W. A., Trokhimenko V. S. \textit{Algebras of multiplace functions},
Walter de Gruyter GmbH \& Co. KG, Berlin/Boston, 2012.

\bibitem{SchTr} Schein B.M., Trohimenko V.S. \textit{Algebras of multiplace
functions}, Semigroup Forum 17 (1979), 1-64.

\bibitem{103} Schein~B.M. \textit{Embedding semigroups in inverse semigroups}, Transl., Ser. 2. Am. Math. Soc. 139 (1988), 93--116 (translation from Mat. Sb., Nov. Ser. 55(97) (1961), 379-400).

\end{thebibliography}
\end{document}